\newtheorem{assumption}{Assumption}
\newtheorem{theorem}{Theorem}[section]
\newtheorem{lemma}[theorem]{Lemma}
\newtheorem{proposition}{Proposition}
\theoremstyle{definition}
\newtheorem{definition}[theorem]{Definition}
\newtheorem{remark}{Remark}
\title[Networks of geometrically exact beams]{Networks of geometrically exact beams: well-posedness and stabilization}
\author{Charlotte Rodriguez}
\thanks{AMS subject classification. 35L50, 35R02, 93D15.\\
  \textit{Keywords. Geometrically exact beam, intrinsic beam, one-dimensional first-order semilinear hyperbolic systems, tree-shaped network, well-posedness, exponential stabilization, boundary feedback.}\\
  \textbf{Funding:} This project has received funding from the European Union’s Horizon 2020 research and innovation programme under the Marie Sklodowska-Curie grant agreement No.765579-ConFlex.}
\address {Charlotte Rodriguez \newline \indent
    {Department Mathematik, Lehrstuhl f\"ur Angewandte Mathematik, \newline \indent
    Friedrich-Alexander-Universit\"at Erlangen-N\"urnberg},
    \newline \indent
    {Cauerstr. 11, 91058 Erlangen, Germany}
  }
\email{\texttt{charlotte.rodriguez@fau.de}}
\date{\today}
\begin{document}

\maketitle

\begin{abstract}

In this work, we are interested in tree-shaped networks of freely vibrating beams which are geometrically exact (GEB) -- in the sense that large motions (deflections, rotations) are accounted for in addition to shearing -- and linked by rigid joints. For the intrinsic GEB formulation, namely that in terms of velocities and internal forces/moments, we derive transmission conditions and show that the network is locally in time well-posed in the classical sense. Applying velocity feedback controls at the external nodes of a star-shaped network, we show by means of a quadratic Lyapunov functional and the theory developed by Bastin \& Coron in \cite{BC2016} that the zero steady state of this network is exponentially stable for the $H^1$ and $H^2$ norms. The major obstacles to overcome in the intrinsic formulation of the GEB network, are that the governing equations are semilinar, containing a quadratic nonlinearity, and that linear lower order terms cannot be neglected.

\end{abstract}

\tableofcontents

\addtocontents{toc}{\protect\setcounter{tocdepth}{1}}


\section{Introduction}
\label{sec:intro}

Multi-link flexible structure are of paramount importance in practice, as attests the growing use of large spacecraft structures, trusses, robot arms, solar panels, antennae and so on \cite{chen_serial_EBbeams, LLS, flotow_spacecraft}.
Their dynamic behavior can be modeled by networks of many interconnected flexible elements such as strings, beams, membranes, shells or plates.
Here, we are concerned with networks of so-called \emph{geometrically exact beams}.

Various one-dimensional models have been developed for beams made of \emph{linear elastic materials} -- i.e. small strains. The \emph{Euler-Bernoulli model} describes a beam whose cross-sections remain perpendicular to the centerline. The \emph{Timoshenko model} allows for shearing. A common point between these models is that they account for motions which are negligeable in comparison to the overall dimensions of the beam -- small displacements of the centerline and small rotations of the cross sections.

However, nowadays the use of modern highly flexible light weight structures -- such as robotic arms \cite{grazioso2018robot}, flexible aircraft wings \cite{palacios10aircraft} or wind turbine blades \cite{wang2014windturbine} -- asks for models taking into account not only shear deformation, but also motions of large magnitude.
Such beam models -- called geometrically exact -- have nonlinear governing equations, as is the case for the \emph{geometrically exact beam model} (GEB) originating from the work of Reissner \cite{reissner1981finite} and Simo \cite{simo1985finite}. 
For a freely vibrating beam -- meaning that the applied external forces and moments are set to zero -- of length $\ell>0$ evolving in $\mathbb{R}^3$, the governing equations of the GEB model read%
\footnote{\label{foot:cross_prod}
Here, $u \times \zeta$ denotes the cross product between any $u, \zeta \in \mathbb{R}^3$, and we shall also write $\widehat{u} \,\zeta = u \times \zeta$, meaning that $\widehat{u}$ is the skew-symmetric matrix 
\begin{equation*}
\widehat{u} = \begin{bmatrix}
0 & -u_3 & u_2 \\
u_3 & 0 & -u_1 \\
-u_2 & u_1 & 0
\end{bmatrix}, 
\end{equation*}
and for any skew-symmetric $\mathbf{u} \in \mathbb{R}^{3 \times 3}$, the vector $\mathrm{vec}(\mathbf{u}) \in \mathbb{R}^3$ is such that $\mathbf{u} = \widehat{\mathrm{vec}(\mathbf{u})}$.
}
\begin{equation} \label{eq:GEB}
\partial_t \left( \begin{bmatrix}
\mathbf{R} & 0\\ 0 & \mathbf{R}
\end{bmatrix} \mathbf{M}
\begin{bmatrix}
V \\ W
\end{bmatrix}
\right) = \partial_x \begin{bmatrix}
\mathbf{R} \Phi \\ \mathbf{R} \Psi \end{bmatrix} + \begin{bmatrix}
0\\ (\partial_x \mathbf{p}) \times (\mathbf{R} \Phi)
\end{bmatrix},
\end{equation}
where the unknown states, for $x\in[0, \ell]$ and $t \geq 0$, are the position $\mathbf{p}(x,t) \in \mathbb{R}^3$ of the beam's centerline and a rotation matrix $\mathbf{R}(x,t)\in \mathrm{SO}(3)$ giving the orientation of the cross sections of the beam, both expressed in some fixed coordinate system. Here, $\mathrm{SO}(3)$ denotes the special orthogonal group, namely, the set of unitary real matrices of size $3$, with determinant equal to $1$ -- which one may also call \emph{rotation} matrices.
On the other hand, $V(x,t), W(x,t), \Phi(x,t),\Psi(x,t)\in \mathbb{R}^3$ denote the \emph{linear velocity}, \emph{angular velocity}, \emph{internal forces} and \emph{internal moments} of the beam respectively, all expressed in a moving coordinate system attached to the centerline of the beam -- a so-called \emph{body-attached basis} -- and are defined by (see Footnote \ref{foot:cross_prod})
\begin{equation} \label{eq:single_beam_VWPhiPsi}
\begin{bmatrix}
V \\ W
\end{bmatrix}
= \begin{bmatrix}
\mathbf{R}^\intercal \partial_t \mathbf{p}\\ \mathrm{vec}\left( \mathbf{R}^\intercal \partial_t \mathbf{R} \right)
\end{bmatrix}, \qquad 
\begin{bmatrix}
\Phi \\ \Psi
\end{bmatrix}= \mathbf{C}^{-1} \begin{bmatrix}
\mathbf{R} ^\intercal \partial_x \mathbf{p}  - e_1 \\ 
\mathrm{vec}\left(\mathbf{R} ^\intercal \partial_x \mathbf{R}  - R^\intercal \tfrac{\mathrm{d}}{\mathrm{d}x} R\right)
\end{bmatrix},
\end{equation}
where $e_1 = (1, 0, 0)^\intercal$.
In the above governing system and definitions, the \emph{mass matrix} $\mathbf{M}(x)\in \mathbb{R}^{6\times 6}$ and \emph{flexibility matrix} $\mathbf{C}(x)\in \mathbb{R}^{6\times 6}$ depend on the geometrical and material properties of the beam, while $R(x)\in \mathrm{SO}(3)$ depends on the initial form of the beam, as it may be pre-curved and twisted before deformation.
Another way of describing geometrically exact beams consists in taking as unknowns so-called \emph{intrinsic} variables -- the velocities $V, W$ and internal forces/moments $\Phi, \Psi$ -- expressed in the body-attached basis.
These are stored in the unknown state $y(x,t) \in \mathbb{R}^{12}$ whose dynamics are then given by a system of the form
\begin{equation}\label{eq:IGEB}
\partial_t y + A(x) \partial_x y + \overline{B}(x) y = \overline{g}(x, y),
\end{equation}
where the coefficients $A,\overline{B}$ and the source $\overline{g}$ depend on $\mathbf{M}, \mathbf{C}$ and $R$.
One should be aware that the matrix $\overline{B}(x)$ is indefinite and, up to the best of our knowledge, may not be assumed arbitrarily small -- in particular cases, the norm of this matrix can be explicitly computed and seen to be away from zero for realistic beam parameters. Furthermore, the function $\overline{g}$ is nonlinear -- quadratic -- with respect to the unknown, which allows for local, but not global, Lipschitz properties.
System \eqref{eq:IGEB} is the \emph{intrinsic geometrically exact beam model} (IGEB), which originates from the work of Hodges \cite{hodges1990, hodges2003geometrically}.
More details are provided in Section \ref{sec:model_mainRes} and Section \ref{sec:beam_model}. In fact, as pointed out in \cite[Sec. 2.3.2]{weiss99}, one may see \eqref{eq:GEB} and \eqref{eq:IGEB} as being related by the nonlinear transformation (see \eqref{eq:single_beam_VWPhiPsi})
\begin{equation} \label{eq:transfo}
\mathcal{T} \colon (\mathbf{p}, \mathbf{R}) \longmapsto y= 
\begin{bmatrix} V\\ W\\ \Phi\\ \Psi\end{bmatrix}.
\end{equation}
Considering the IGEB model raises the number of governing equations from six to twelve, but with the advantage of dealing with a first-order hyperbolic system (as $A(x)$ is an hyperbolic matrix\footnote{All eigenvalues of $A(x)$ are real and one may find $12$ associated independent eigenvectors.}) which is only semilinear;
and a large literature -- beyond the context of beam models -- exists on such models. In particular, a systematic study of one-dimensional hyperbolic systems -- well-posedness, control, stabilization -- has been developed by Li \cite{Li_Duke85} and Bastin \& Coron \cite{BC2016}.

\subsection{Our contributions}
\label{subsec:contrib}

In this article, we are concerned with tree-shaped networks of freely vibrating geometrically exact beams. \textcolor{black}{Such networks have not yet been considered in the literature.}
Each beam's dynamics are governed by the IGEB model, which is of the form \eqref{eq:IGEB}, and the beams are connected through rigid joints.
We investigate the local in time well-posedness of the system and, in the case of star-shaped networks, the exponential stabilization of steady states by means of velocity feedback controls applied at the nodes.
The importance of this type of study lies in the need for engineering to eliminate vibrations in such structures \cite{Matsuoka1995, UchiyamaKonno1991}.
More precisely, in this article, 
\begin{itemize}
\item from the continuity of displacements and the balance of forces/moments at the joint, we derive the transmission conditions for a tree-shaped network of beams governed by the IGEB model (see Subsection \ref{subsec:derivation_nodalCond});
\item in Theorem \ref{thm:well-posedness}, we show that the network system (given by \eqref{eq:syst_y} below), with or without feedback, admits a unique local in time solution in $C^0_t H^1_x$ for $H^1$ initial data (resp. $C^0_t H^2_x $ for $H^2$ initial data and more regular coefficients) -- such a solution is consequently $C^0_{x,t}$ (resp. $C^1_{x,t}$);
\item in Theorem \ref{thm:stabilization}, for a star-shaped network, we show that if velocity feedback controls are applied at all external nodes, then the zero steady state is locally exponentially stable for the $H^1$ and $H^2$ norms.
\end{itemize}
We stress that this work also provides an extension of the stabilization study realized for a single beam in our previous work \cite{RL2019} to a wider class of beams and velocity feedback controls (see also Remark \ref{rem:extention}). More precisely, concerning the former point, the formulation accounts for material anisotropy and varying material/geometrical properties along the beam -- as, here, we consider the general IGEB model of \cite{hodges2003geometrically}.

\subsection{Outline} 
The next section (Section \ref{sec:model_mainRes}) unfolds as follows.
In Subsection \ref{subsec:the_model}, we start by presenting the system describing the network (System \eqref{eq:syst_y} below), before stating the main results in Subsection \ref{subsec:main_results}.

Then, the aim of Section \ref{sec:beam_model} is to clarify the meaning of the different elements of the model.
In Subsection \ref{subsec:beam_description}, we explain how the beam is described, thus clarifying the meaning of the unknown states of the GEB and IGEB models. Then, in Subsection \ref{subsec:derivation_nodalCond}, we derive the nodal conditions of System \eqref{eq:syst_y}.

Section \ref{sec:riem_wellp} is concerned with the well-posedness result. We start by writing \eqref{eq:syst_y} in \emph{diagonal form} in Subsection \ref{subsec:riem}, before proving Theorem \ref{thm:well-posedness} in Subsection \ref{subsec:wellposedness}.

Finally, Section \ref{sec:stab} is centered on the stabilization result.
In Subsection \ref{subsec:stab_P}, we prove Theorem \ref{thm:stabilization} from the point of view of \eqref{eq:syst_y} -- that is, the \emph{physical system}. Afterwards, in Subsection \ref{subsec:stab_D}, we discuss on this proof seen from the point of view of the diagonal system.

\subsection{Notation}
Let $m, n, k \in \mathbb{N}$ and $M \in \mathbb{R}^{n\times n}$. Here, the identity and null matrices are denoted by $\mathds{I}_n \in \mathbb{R}^{n \times n}$ and $\mathds{O}_{n, m} \in \mathbb{R}^{n \times m}$, and we use the abbreviation $\mathds{O}_{n} = \mathds{O}_{n, n}$. The transpose and determinant of $M$ are denoted by $M^\intercal$ and $\mathrm{det}(M)$. By $\|M\|$, we denote the operator norm induced by the Euclidean norm $|\cdot |$.
The symbol $\mathrm{diag}(\, \cdot \, , \ldots, \, \cdot \, )$ denotes a (block-)diagonal matrix composed of the arguments.

\section{The model and main results}
\label{sec:model_mainRes}

\subsection{The model}
\label{subsec:the_model}

We start by introducing some notation inspired by \cite{AlabauPerrollazRosier2015}.
Consider an oriented tree containing $N$ edges. The \emph{edges} are indexed by $i \in \mathcal{I} = \{1, \ldots, N\}$, while the \textit{nodes} are indexed by $n \in \mathcal{N} = \{0, \ldots, N\}$, and we interchangeably use the expressions "node of index $n$" (resp. "edge of index $i$"), and "node $n$" (resp. "edge $i$") for short.
The set of nodes is partitioned as $\mathcal{N} = \mathcal{N}_S \cup \mathcal{N}_M$, where $\mathcal{N}_S$ and $\mathcal{N}_M$ are the set of \emph{simple} and \emph{multiple} nodes, respectively.

For any $i \in \mathcal{I}$, the $i$-th edge, of length $\ell_i$, is identified with the interval $[0, \ell_i]$ whose endpoints $x=0$ and $x=\ell_i$ are called \textit{initial point} and \textit{ending point} of this edge.
Without loosing generality, we assume that the node $n=0$ is a simple node and is the initial point of the edge $i=1$, and we assume that for any $i \in \mathcal{I}$ the edge $i$ has for ending point the node with the same index $n=i$. We refer to Fig. \ref{fig:tree_star} for visualization.

For any node $n \in \mathcal{N}$, we denote by $k_n$ the number of edges incident to this node.
For any multiple node $n \in \mathcal{N}_M$, we denote by $\mathcal{I}_n$ the set of indices of all edges starting at this node; we also denote the elements of $\mathcal{I}_n$ by (see Fig. \ref{fig:notation_mult_node})
\begin{equation} \label{eq:nota_In_indices}
\mathcal{I}_n = \{i_2, i_3, \ldots, i_{k_n}\}, \qquad \text{with} \quad i_2 < i_3 < \ldots < i_{k_n}.
\end{equation}
Note that, for $\#S$ denoting the cardinality of any set $S$, 
\begin{equation} \label{eq:def_kn}
k_n = 
\begin{cases}
 1& \text{if }n\in \mathcal{N}_S,\\
\#\mathcal{I}_n + 1, & \text{if }n\in \mathcal{N}_M.
\end{cases}
\end{equation}
The purpose of this tree is to specify how a collection of $N$ beams are connected to each other, namely, which beam is connected to which beam and at which endpoint ($x=0$ or $x = \ell_i$, for $i \in \mathcal{I}$).

\begin{figure}
\includegraphics[scale=0.8]{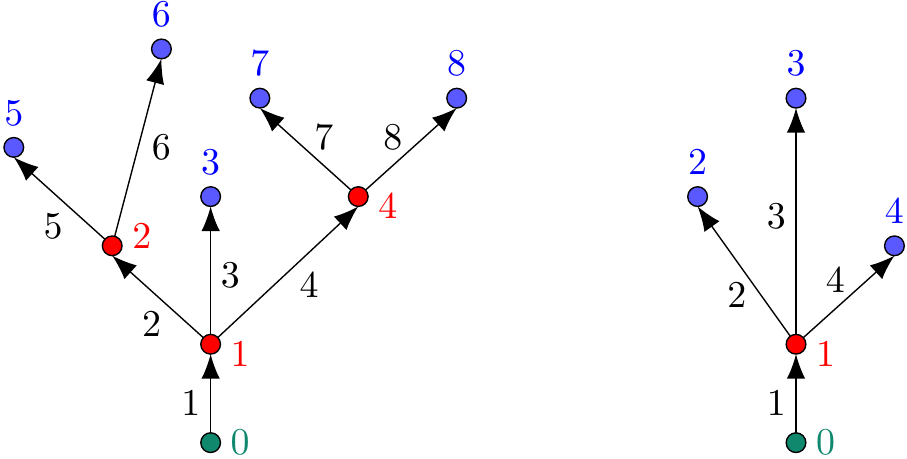}
\caption{Left: tree-shaped network with $N=8$ edges,
$\mathcal{N}_S = \{0, 3, 5, 6, 7\}$ and $\mathcal{N}_M = \{ 1, 2, 4 \}$. Right: star-shaped network with $N=4$ edges,
$\mathcal{N}_S = \{0, 2, 3\}$ and $\mathcal{N}_M = \{ 1 \}$.}
\label{fig:tree_star}
\end{figure}

Let $i \in \mathcal{I}$. To the $i$-th edge corresponds a beam characterized by its length $\ell_i>0$, the so-called \emph{mass matrix} $\mathbf{M}_i$ and \emph{flexibility matrix} $\mathbf{C}_i$, and the \emph{initial curvature-twist matrix} $\mathbf{E}_i$. While\footnote{We may assume that $\mathbf{M}_i, \mathbf{C}_i, \mathbf{E}_i$ are of higher regularity: $C^k([0, \ell_i]; \mathbb{R}^{6\times 6})$ for $k\geq 2$.} $\mathbf{M}_i, \mathbf{C}_i \in C^1([0, \ell_i]; \mathbb{R}^{6 \times 6})$ depend on the geometry and material of the beam, $\mathbf{E}_i \in C^1([0, \ell_i]; \mathbb{R}^{6 \times 6})$ depends on the initial form of the beam. For any $x \in [0, \ell_i]$, the matrix $\mathbf{E}_i(x)$ is indefinite (see \eqref{eq:def_E_bold} for details), and $\mathbf{M}_i(x)$ and $\mathbf{C}_i(x)$ are both assumed positive definite.
This beam -- of index $i$ -- is described by the unknown state $y_i \colon [0, \ell_i]\times [0, T] \rightarrow \mathbb{R}^{12}$ which has the form \begin{equation}\label{eq:unknown_physical}
y_i = \begin{bmatrix}
v_i \\ z_i
\end{bmatrix}.
\end{equation}
It consists of the \emph{linear and angular velocities} $v_i \colon [0, \ell_i]\times [0, T] \rightarrow \mathbb{R}^6$ and the \emph{internal forces and moments} $z_i \colon [0, \ell_i]\times [0, T] \rightarrow \mathbb{R}^6$ of the beam. The precise meaning of these variables is given in Section \ref{sec:beam_model}. 

\begin{figure}
\includegraphics[scale=0.8]{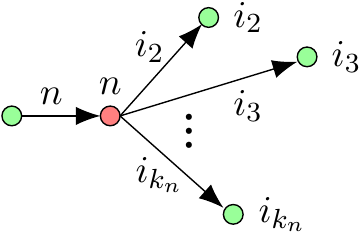}
\caption{A multiple $n$ and the incident edges and nodes.}
\label{fig:notation_mult_node}
\end{figure}

For the network, the unknown state, $y = (y_1, y_2, \ldots, y_N)$, consists of the unknowns $y_i$ ($i \in \mathcal{I}$) of the $N$ beams contained in this network.
Its dynamics are given by the system
\begin{equation} \label{eq:syst_y}
\begin{dcases}
\partial_t y_i + A_i(x) \partial_x y_i + \overline{B}_i(x) y_i = \overline{g}_i (x,y_i) & \text{in }(0, \ell_i) \times(0, T), \ i \in \mathcal{I}\\
\overline{R}_{i}(0) v_i(0, t) = \overline{R}_{n}(\ell_n) v_n(\ell_n, t) & t \in (0,T), \ i \in \mathcal{I}_n, \ n \in \mathcal{N}_M\\
\overline{R}_{n}(\ell_n) z_n(\ell_n, t) - {\textstyle \sum_{i \in \mathcal{I}_n}} \overline{R}_{i}(0) z_i(0, t) \\
\hspace{2.175cm} = - \overline{R}_n(\ell_n) K_n v_n(\ell_n, t) & t \in (0,T), \ n \in \mathcal{N}_M \\
z_n(\ell_n, t) = - K_n v_n(\ell_n, t) & t \in (0,T), \ n \in \mathcal{N}_S \setminus \{0\}\\
z_1(0, t) = K_0 v_1(0, t) & t \in (0,T)\\
y_i(x,0) = y_i^0(x) & x \in (0, \ell_i), \ i \in \mathcal{I}.
\end{dcases}
\end{equation}
Let us now describe this system, starting with the governing equations.
Let $i \in \mathcal{I}$. Each beam's dynamics are governed by the IGEB model which has been briefly described in Section \ref{sec:intro} for a single beam -- here a subindex $i$ is added to the coefficients $A_i$, $\overline{B}_i$ and source $\overline{g}_i$, since the beams may have different material/geometrical properties and initial forms. As mentioned before, this is a system of twelve equations forming a one-dimensional semilinear hyperbolic system.
The coefficients $A_i, \overline{B}_i \in C^1([0, \ell_i];\mathbb{R}^{12 \times 12})$ are defined by
\begin{equation} \label{eq:def_Ai_barBi}
A_i = \begin{bmatrix}
\mathds{O}_6 & -\mathbf{M}_i^{-1}\\
-\mathbf{C}_i^{-1} & \mathds{O}_6
\end{bmatrix}, \qquad \overline{B}_i = \begin{bmatrix}
\mathds{O}_6 & - \mathbf{M}^{-1}_i\mathbf{E}_i\\
\mathbf{C}_i^{-1}\mathbf{E}_i^\intercal & \mathds{O}_6
\end{bmatrix}.
\end{equation}
In these definitions, one observes that, while both $A_i$ and $\overline{B}_i$ depend on the geometry and material of the beam, $\overline{B}_i$ also depends on the initial form of the beam. 
Latter on, we will see that $A_i(x)$ is hyperbolic for all $x \in [0, \ell_i]$. 
As we also pointed out earlier, for any $x \in [0, \ell_i]$, the matrix $\overline{B}_i(x)$ is indefinite and, up to the best of our knowledge, may not be assumed arbitrarily small.
The nonlinear function $\overline{g}_i \in C^1([0, \ell_i]\times \mathbb{R}^{12}; \mathbb{R}^{12})$ is defined by 
\begin{linenomath}
\begin{equation*}
\overline{g}_i(x, \mathbf{u}) = \overline{\mathcal{G}}_i(x, \mathbf{u})\mathbf{u},
\end{equation*}
\end{linenomath}
for all $x \in [0, \ell_i]$ and $ \mathbf{u}=(\mathbf{u}_1^\intercal, \mathbf{u}_2^\intercal, \mathbf{u}_3^\intercal,\mathbf{u}_4^\intercal)^\intercal \in \mathbb{R}^{12}$, with $\mathbf{u}_j \in \mathbb{R}^3$ for $j\in\{1, \ldots, 4\}$, where the function $\overline{\mathcal{G}}_i \colon [0, \ell_i]\times \mathbb{R}^{12} \rightarrow \mathbb{R}^{12}$ is defined by (see Footnote \ref{foot:cross_prod})
\begin{linenomath}
\begin{equation*}
\overline{\mathcal{G}}_i(x,\mathbf{u}) = - \mathrm{diag}(\mathbf{M}_i(x) , \mathbf{C}_i(x))^{-1} \begin{bmatrix}
\widehat{\mathbf{u}}_2 & \mathds{O}_3 & \mathds{O}_3 & \widehat{\mathbf{u}}_3\\
\widehat{\mathbf{u}}_1 & \widehat{\mathbf{u}}_2 & \widehat{\mathbf{u}}_3 & \widehat{\mathbf{u}}_4 \\
\mathds{O}_3 & \mathds{O}_3 & \widehat{\mathbf{u}}_2 & \widehat{\mathbf{u}}_1\\
\mathds{O}_3 & \mathds{O}_3 & \mathds{O}_3 & \widehat{\mathbf{u}}_2
\end{bmatrix} \mathrm{diag}(\mathbf{M}_i(x) , \mathbf{C}_i(x)).
\end{equation*}
\end{linenomath}
Note that $\overline{g}_i$ is quadratic and $C^\infty$ with respect to the argument $\mathbf{u}$ (see also Remark \ref{rem:form_sources}). While $\bar{g}_i(x,\cdot)$ is locally Lipschitz in $\mathbb{R}^{12}$ for any $x \in [0, \ell_i]$, and $\bar{g}_i$ is locally Lipschitz in $H^1(0, \ell_i; \mathbb{R}^{12})$, no global Lipschitz property is available.

Let us now describe the nodal conditions, which are derived in Section \ref{sec:beam_model}.
We start with the transmission conditions for the multiple nodes. At these nodes, it is assumed that the beams remain attached to each other through time and without rotating -- i.e. we consider \textit{rigid} joints. For the variables $y_i$ ($i \in \mathcal{I}$) these assumptions amount to imposing the following \textit{continuity conditions}: for all $n \in \mathcal{N}_M$,
\begin{equation} \label{eq:cont_cond}
\overline{R}_{i}(0) v_i(0, t) = \overline{R}_{n}(\ell_n) v_n(\ell_n, t), \qquad \text{for all }i \in \mathcal{I}_n, \ t \in [0, T].
\end{equation}
Above, for any $i \in \mathcal{I}$, the function $\overline{R}_{i}\in C^2([0, \ell_i], \mathbb{R}^{6 \times 6})$ is defined by
\begin{equation} \label{eq:def_barRi}
\overline{R}_{i} = \mathrm{diag}(R_{i}, R_{i}).
\end{equation}
where $R_{i}\in C^2([0, \ell_i], \mathrm{SO}(3))$ depends on the initial form of the beam (see Section \ref{sec:beam_model}) -- this function was denoted $R$ for a single beam in Section \ref{sec:intro}.
Additionally, for all $n \in \mathcal{N}_M$,
\begin{equation} \label{eq:Kirchhoff_cond}
\overline{R}_{n}(\ell_n) z_n(\ell_n, t) - \sum_{i \in \mathcal{I}_n} \overline{R}_{i}(0) z_i(0, t) = - \overline{R}_n(\ell_n) K_n v_n(\ell_n, t), \quad \text{for all }t \in [0, T]
\end{equation}
provides the condition of balance of forces/moments, also called \emph{Kirchhoff condition}.
In \eqref{eq:Kirchhoff_cond}, two situations may be accounted for: either  a feedback is applied at this node, in which case $K_n \in \mathbb{R}^{6 \times 6}$ is a positive definite symmetric matrix; or $K_n = \mathds{O}_6$ and no external load is applied at this node -- the latter corresponds to the classical Kirchhoff condition. 
At simple nodes $n\in\mathcal{N}_S$, either the velocity feedback control
\begin{linenomath}
\begin{align} 
\label{eq:neum_cond_n}
z_n(\ell_n, t) &=- K_n v_n(\ell_n, t),\qquad \text{for all }t \in [0, T], \text{ if }n\neq 0,\\
z_1(0, t) &= K_0 v_1(0, t), \qquad \quad \ \, \text{for all }t \in [0, T], \text{ if }n= 0 \label{eq:neum_cond_0}
\end{align}
\end{linenomath}
is applied, where $K_n \in \mathbb{R}^{6 \times 6}$ is positive definite and symmetric; or $K_n = \mathds{O}_6$ in \eqref{eq:neum_cond_n}, \eqref{eq:neum_cond_0} and the beam is \emph{free}.
Instead of \eqref{eq:neum_cond_n} or \eqref{eq:neum_cond_0}, one may want to assume that the beam is clamped at this node, which would amount to considering the respective homogeneous Dirichlet conditions
\begin{equation}\label{eq:diri_cond_0n}
v_n(\ell_n, t) = 0, \quad \text{or} \quad v_1(0, t) = 0, \qquad \text{for all }t \in [0, T].
\end{equation}

Finally, the last equation in \eqref{eq:syst_y} describes the initial conditions, with initial datum $y^0 = (y_1^0, y_2^0, \ldots, y_N^0)$ for the whole network.

\subsection{Main results}
\label{subsec:main_results}

We will need to define compatibility conditions for System \eqref{eq:syst_y}. As for the unknown, we write the initial datum $y^0 = (y_1^0, \ldots, y_N^0)$ as
\begin{linenomath}
\begin{equation*}
y_i^0 = \begin{bmatrix}
v_i^0 \\ z_i^0
\end{bmatrix}, \qquad \text{with } v_i^0, z_i^0 \colon [0, \ell_i] \rightarrow \mathbb{R}^6, \quad \text{for all }i \in \mathcal{I}.
\end{equation*}
\end{linenomath}
Let us denote $\mathbf{H}^k_x = \prod_{i=1}^N H^k(0, \ell_i; \mathbb{R}^{12})$, endowed with the associated product norm, for any $k \geq 1$.

\begin{definition}
\label{def:comp_cond}
We say that the initial datum $y^0 \in \mathbf{H}^1_x$ fulfills the zero-order compatibility conditions of System \eqref{eq:syst_y} if
\begin{equation} \label{eq:compat_0} 
\begin{aligned}
& (\overline{R}_{i} v_i^0)(0) = (\overline{R}_{n} v_n^0)(\ell_n), && \text{for }i \in \mathcal{I}_n, \ n \in \mathcal{N}_M,\\
&(\overline{R}_{n} z_n^0)(\ell_n) - \sum_{i \in \mathcal{I}_n} (\overline{R}_{i} z_i^0)(0) = - (\overline{R}_n K_n v_n^0)(\ell_n), \quad && \text{for }n \in \mathcal{N}_M,\\
&z_n^0(\ell_n) = - K_n v_n^0(\ell_n),  &&\text{for } n \in \mathcal{N}_S \setminus \{0\},\\
&z_1^0(0) = K_0 v_1^0(0), && \text{for }n=0,
\end{aligned}
\end{equation} 
holds. We say that $y^0\in \mathbf{H}^2_x$ fulfills the first-order compatibility conditions of \eqref{eq:syst_y} if it fulfills \eqref{eq:compat_0} and, $y^1 \in \mathbf{H}_x^1$ defined by 
\begin{linenomath}
\begin{equation*}
y_i^1 = -  A_i \frac{\mathrm{d}y_i^0}{\mathrm{d}x} - \overline{B}_i y_i^0 + \overline{g}_i(\cdot, y_i^0) = \begin{bmatrix}
v_i^1 \\z_i^1
\end{bmatrix}, \qquad \text{for all }i\in\mathcal{I},
\end{equation*}
\end{linenomath}
also fulfills \eqref{eq:compat_0}, where $v_i^0, z_i^0$ are replaced by $v_i^1, z_i^1$ respectively. 
\end{definition}


We now make an assumption on the mass and flexibility matrices, to ensure a certain regularity of the eigenvalues and eigenvectors of $\{A_i\}_{i \in \mathcal{I}}$ with respect to $x$.

\begin{assumption} \label{as:mass_flex}
Let $m \in \{1, 2, \ldots\}$ be given. For any $i \in \mathcal{I}$, let $\Theta_i \colon [0, \ell_i] \rightarrow \mathbb{R}^{6\times 6}$ be defined by
\begin{equation} \label{eq:def_Thetai}
\Theta_i = \mathbf{C}_i^{-\sfrac{1}{2}} \mathbf{M}_i^{-1}\mathbf{C}_i^{-\sfrac{1}{2}};
\end{equation}
it has values in the set of positive definite symmetric matrices (since $\mathbf{M}_i$ and $\mathbf{C}_i$ both also have values in this set). We make the following two assumptions:
\begin{enumerate}[label=\alph*)]
\item \label{item:MCassump_reg}
the mass and flexibility matrices have the regularity 
\begin{equation}\label{eq:reg_mass_flex}
\mathbf{C}_i, \mathbf{M}_i \in C^m([0, \ell_i]; \mathbb{R}^{6 \times 6}), \qquad \text{for all }i\in \mathcal{I};
\end{equation}
in which case $\Theta_i \in C^m([0, \ell_i]; \mathbb{R}^{6 \times 6})$ for all $i \in \mathcal{I}$;

\item 
there exists $U_i$ and $D_i$, both of regularity $C^m([0, \ell_i]; \mathbb{R}^{6 \times 6})$, such that
\begin{equation} \label{eq:diag_Thetai}
\Theta_i(x) = U_i(x)^\intercal D_i(x)^2 U_i(x), \qquad \text{for all }x \in [0, \ell_i],
\end{equation}
where $D_i(x)$ is a positive definite diagonal matrix containing the square roots of the eigenvalues of $\Theta_i(x)$ as diagonal entries, while $U_i(x)$ is a unitary matrix.
\end{enumerate}
\end{assumption}

Whenever $\mathbf{M}_i$ and $\mathbf{C}_i$ ($i\in\mathcal{I}$) fulfill \eqref{eq:reg_mass_flex}, Assumption \ref{as:mass_flex} \ref{item:MCassump_reg} is readily verified if $\mathbf{M}_i, \mathbf{C}_i$ have values in the set of diagonal matrices, or if the eigenvalues of $\Theta_i(x)$ are distinct (one may adapt \cite[Th. 2, Sec. 11.1]{evans2}), for all $i\in \mathcal{I}$ and $x \in [0, \ell_i]$. So is it, clearly, if the mass and flexibility are both constant, meaning that the material and geometrical properties do not vary along the beam.

For any \emph{tree-shaped} network as above, we show the following (local in time) well-posedness result.

\begin{theorem}[Well-posedness for tree-shaped networks] \label{thm:well-posedness}
Let $k \in \{1, 2\}$, suppose that Assumption \ref{as:mass_flex} is fulfilled for $m=k+1$, and assume that $\mathbf{E}_i \in C^k([0, \ell_i]; \mathbb{R}^{6 \times 6})$ for all $i\in\mathcal{I}$. Then, there exists $\delta_0>0$ such that for any $y^0 \in \mathbf{H}^k_x$ satisfying $\|y^0\|_{\mathbf{H}^k_x} \leq \delta_0$ and the $(k-1)$-order compatibility conditions, there exists a unique solution $y \in C^0([0, T), \mathbf{H}^k_x)$ to \eqref{eq:syst_y}, with $T\in (0, +\infty]$. Moreover, if $\|y(\cdot, t)\|_{\mathbf{H}^k_x} \leq \delta_0$ for all $t \in [0, T)$ then $T = + \infty$.
\end{theorem}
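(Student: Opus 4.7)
The plan is to reduce \eqref{eq:syst_y} to a diagonal semilinear hyperbolic system and then invoke the classical local well-posedness theory for $1$-D hyperbolic systems, in the spirit of Li \cite{Li_Duke85} and Bastin--Coron \cite{BC2016}. First I would diagonalize the principal part. Since $A_i^2 = \mathrm{diag}(\mathbf{M}_i^{-1}\mathbf{C}_i^{-1},\, \mathbf{C}_i^{-1}\mathbf{M}_i^{-1})$ is similar to $\mathrm{diag}(\Theta_i,\Theta_i)$ and $\Theta_i = U_i^\intercal D_i^2 U_i$ with $U_i, D_i \in C^{k+1}$, Assumption \ref{as:mass_flex} provides a pointwise-invertible $L_i \in C^{k+1}([0,\ell_i]; \mathbb{R}^{12\times 12})$ such that $L_i A_i L_i^{-1} = \Lambda_i := \mathrm{diag}(D_i, -D_i)$, with six strictly positive and six strictly negative eigenvalues uniformly away from zero. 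Setting $r_i := L_i y_i$ transforms the governing equations into
\begin{equation*}
\partial_t r_i + \Lambda_i(x) \partial_x r_i + \widetilde{B}_i(x) r_i = \widetilde{g}_i(x, r_i), \qquad i \in \mathcal{I},
\end{equation*}
with $\widetilde{g}_i$ still quadratic in $r_i$ --- hence locally, but not globally, Lipschitz in $\mathbb{R}^{12}$ and in $\mathbf{H}^k_x$ --- and the requirement $m=k+1$ in Assumption \ref{as:mass_flex} just ensures enough regularity of the eigenstructure.

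Next I would recast the nodal relations in the standard form $r^{\mathrm{in}}(t) = \mathcal{B}\, r^{\mathrm{out}}(t)$. Splitting $r_i = ((r_i^+)^\intercal,(r_i^-)^\intercal)^\intercal$ according to the signs of the characteristic speeds, $r_i^+$ is incoming at $x=0$ and outgoing at $x=\ell_i$, while $r_i^-$ is incoming at $x=\ell_i$ and outgoing at $x=0$. At each multiple node $n \in \mathcal{N}_M$, conditions \eqref{eq:cont_cond}--\eqref{eq:Kirchhoff_cond} provide exactly $6 k_n$ scalar linear relations among the $12 k_n$ traces, which must be solved for the $6k_n$ incoming components. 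The crucial point is that the resulting $6k_n \times 6k_n$ linear system is invertible, a fact I would verify by direct algebraic manipulation using the orthogonality of $\overline{R}_i(\cdot)$, the positive definiteness of $\mathbf{M}_i$ and $\mathbf{C}_i$, and the fact that each $K_n$ is either $\mathds{O}_6$ or symmetric positive definite. The simple external nodes \eqref{eq:neum_cond_n}--\eqref{eq:neum_cond_0} are handled analogously with $k_n=1$. This algebraic step is specific to the network geometry and is the main obstacle of the proof.

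Once the system is in diagonal form with nodal conditions of the above type, local well-posedness follows from a standard fixed-point argument. For $T, \delta_0 > 0$ to be chosen, I would consider the map $\mathcal{F}$ sending $r$ in the closed ball of radius $2 \delta_0$ in $C^0([0,T]; \mathbf{H}^k_x)$ to the solution $\widetilde{r}$ of the linear hyperbolic problem obtained by freezing $\widetilde{g}_i(\cdot, r_i)$ as a source and keeping the linear nodal conditions. This linear problem is solved by the method of characteristics together with the nodal relations from the previous step, and the corresponding $\mathbf{H}^k$ estimate follows by differentiating the system in $x$ and $t$ and applying Gronwall's inequality, the lower-order term $\widetilde{B}_i$ being absorbed linearly. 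The embedding $H^1(0,\ell_i) \hookrightarrow L^\infty$ controls the quadratic source, so that smallness of $\|y^0\|_{\mathbf{H}^k_x}$ both preserves the ball and produces a contraction on a sufficiently short time interval. The $(k-1)$-order compatibility conditions are precisely what is needed to ensure $\widetilde{r}(\cdot, t) \in \mathbf{H}^k_x$ for every $t \in [0,T]$. Uniqueness follows from an energy estimate on the difference of two solutions, and the standard blow-up alternative yields the last assertion of the theorem.

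Beyond the algebraic step at the nodes, the only analytic subtlety is that $\widetilde{g}_i$ is genuinely nonlinear and only locally Lipschitz on $\mathbf{H}^k_x$, which forces the smallness assumption on $y^0$; by contrast, the indefinite lower-order term $\widetilde{B}_i$ --- the central difficulty in the stabilization theorem --- plays no special role here, since for well-posedness it enters Gronwall estimates linearly.
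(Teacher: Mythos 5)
Your proposal follows essentially the same route as the paper: diagonalize $A_i$ via Assumption \ref{as:mass_flex}, pass to Riemann invariants, express at every node the traces entering the beams in terms of those determined by the interior (your incoming/outgoing labels are swapped relative to the paper's convention, but the substance — and the crucial invertibility of the $6k_n\times 6k_n$ system at multiple nodes, the paper's Lemma \ref{lem:calB_n}, which indeed reduces to the positive definiteness of $\sigma^n+\overline{K}_n$, a sum of matrices congruent to $D_i^{-1}$ plus a positive semi-definite $\overline{K}_n$ — is identical), and conclude by the standard local theory for semilinear one-dimensional hyperbolic systems. The only real difference is presentational: the paper rescales all edges to a common interval, stacks the network into a single $12N\times 12N$ system, and invokes Theorem \ref{thm:bastin_coron_well_posed} of Bastin--Coron as a black box, whereas you sketch the underlying characteristics/fixed-point argument directly and assert (with the correct ingredients) rather than carry out the nodal invertibility computation.
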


\begin{remark}
Theorem \ref{thm:well-posedness} also holds if the beam is clamped at one or several simple nodes, provided that the compatibility conditions \eqref{eq:compat_0} are accordingly changed.
\end{remark}

The proof of Theorem \ref{thm:well-posedness}, given in Section \ref{sec:riem_wellp}, is based on existing results on first-order hyperbolic systems -- the local existence and uniqueness of $C_t^0 H_x^k$ solutions to general one-dimensional semilinear ($k=1$) and quasilinear ($k=2$) hyperbolic systems, which have been addressed by Bastin \& Coron \cite{BC2016, bastin2017exponential}. 
Such results require a certain regularity of the coefficients as well as a specific form of the boundary conditions for the system written in diagonal form (also called \emph{characteristic form} or \emph{Riemann invariants}); namely, the so-called \emph{outgoing information} should be explicitly expressed as a function of the \emph{incoming information} (see Section \ref{sec:riem_wellp}). The main point of the proof of Theorem \ref{thm:well-posedness} is consequently to write \eqref{eq:syst_y} in Riemann invariants and study its transmission conditions.

Next, we consider a stabilization problem, in the sense of the following definition.

\begin{definition}[Local exponential stability]
Let $k \in \{1, 2\}$. The steady state $y\equiv 0$ of \eqref{eq:syst_y} is locally $\mathbf{H}^k_x$ exponentially stable if there exist $\varepsilon>0$, $\beta>0$ and $\eta \geq 1$ such that the following holds. Let  $y^0 \in \mathbf{H}^k_x$ fulfill both $\|y^0\|_{\mathbf{H}^k_x}\leq \varepsilon$ and the $(k-1)$-order compatibility conditions. Then, there exists a unique global in time solution $y \in C^0([0, +\infty); \mathbf{H}^k_x)$ to \eqref{eq:syst_y}. Moreover,
\begin{linenomath}
\begin{equation*}
\|y(\cdot, t)\|_{\mathbf{H}^k_x} \leq \eta  e^{- \beta t } \|y^0\|_{\mathbf{H}^k_x}, \qquad \text{for all } \, t \in [0, +\infty).
\end{equation*}
\end{linenomath}
\end{definition}

For a \emph{star-shaped} network (i.e. $\mathcal{N}_M = \{1\}$) such that the multiple node is free (i.e. $K_1 = \mathds{O}_6$) while velocity feedback controls are applied at all simple nodes (i.e. $K_n$ is symmetric positive definite for all $n \in \mathcal{N}_S$), we show the following result.

\begin{theorem}[Stabilization for star-shaped networks] \label{thm:stabilization}
Let $k \in \{1, 2\}$, suppose that Assumption \ref{as:mass_flex} is fulfilled for $m=k+1$, and that $\mathbf{E}_i \in C^k([0, \ell_i]; \mathbb{R}^{6 \times 6})$ for all $i\in\mathcal{I}$. If $\mathcal{N}_M = \{1\}$, $K_1= \mathds{O}_6$, and $K_n$ is symmetric positive definite for all $n \in \mathcal{N}_S$, then the steady state $y \equiv 0$ of \eqref{eq:syst_y} is locally $\mathbf{H}^k_x$ exponentially stable.
\end{theorem}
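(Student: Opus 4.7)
The plan is to construct a weighted Lyapunov functional in the physical variables of \eqref{eq:syst_y}, then pass to $H^1$ and $H^2$ by applying the same functional to the time-derivatives of $y$. The natural candidate is
\begin{equation*}
V_0(t) = \sum_{i \in \mathcal{I}} \int_0^{\ell_i} p_i(x) \, y_i(x,t)^\intercal \mathrm{diag}(\mathbf{M}_i(x), \mathbf{C}_i(x)) \, y_i(x,t) \, dx,
\end{equation*}
with positive scalar weights $p_i$ to be calibrated later. The choice of symmetrizer is driven by three algebraic cancellations: (i) $\mathrm{diag}(\mathbf{M}_i, \mathbf{C}_i)\,\overline{B}_i = \bigl[\begin{smallmatrix}\mathds{O}_6 & -\mathbf{E}_i\\ \mathbf{E}_i^\intercal & \mathds{O}_6\end{smallmatrix}\bigr]$ is \emph{skew-symmetric}, so the indefinite and non-small lower-order term drops out of $\dot V_0$; (ii) $\mathrm{diag}(\mathbf{M}_i, \mathbf{C}_i) A_i = \bigl[\begin{smallmatrix}\mathds{O}_6 & -\mathds{I}_6 \\ -\mathds{I}_6 & \mathds{O}_6\end{smallmatrix}\bigr]$ is constant and symmetric, so integration by parts produces a clean boundary flux $-2 p_i(x) v_i^\intercal z_i \big|_{0}^{\ell_i}$ plus an interior contribution proportional to $p_i'$; (iii) the explicit block form of $\overline{\mathcal{G}}_i$ and the skew-symmetry of the hat map give $y_i^\intercal \mathrm{diag}(\mathbf{M}_i, \mathbf{C}_i) \overline{g}_i(x, y_i) \equiv 0$, so the cubic term vanishes identically.

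The nodal analysis then splits naturally. At each simple node with feedback the boundary flux reduces to a term bounded above by $-c|v_n|^2$, giving strict dissipation (the sign at $x=0$ of edge $1$ goes in the same direction because of the opposite sign convention between \eqref{eq:neum_cond_0} and \eqref{eq:neum_cond_n}). At the free multiple node $n=1$ (where $K_1 = \mathds{O}_6$), dotting \eqref{eq:Kirchhoff_cond} with $\overline{R}_1(\ell_1) v_1(\ell_1)$, using \eqref{eq:cont_cond} together with the orthogonality of each $\overline{R}_i$, yields the power-conservation identity
\begin{equation*}
v_1(\ell_1)^\intercal z_1(\ell_1) = \sum_{i \in \mathcal{I}_1} v_i(0)^\intercal z_i(0).
\end{equation*}
Hence, provided the weights are calibrated so that $p_1(\ell_1) = p_i(0)$ for every $i \in \mathcal{I}_1$, the boundary contributions at the joint cancel exactly, giving $\dot V_0 \leq -c \sum_{n \in \mathcal{N}_S} |v_n|^2 - 2\sum_i \int_0^{\ell_i} p_i' \, v_i^\intercal z_i \, dx + O(V_0^{3/2})$ for small data.

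To upgrade this to $\dot V_0 \leq -\beta V_0$, I would take strictly monotone weights $p_i$ with slope of size $\mu$. The interior integrand has no definite sign in the physical variables, but after passing to the Riemann coordinates of Subsection \ref{subsec:riem} it diagonalizes along characteristics, producing a term comparable to $-\mu V_0$ — this is precisely the viewpoint announced for Subsection \ref{subsec:stab_D}. Choosing $\mu$ small enough that the residual non-signed contributions from $\partial_x \mathrm{diag}(\mathbf{M}_i,\mathbf{C}_i)$, from the cross-coupling produced by diagonalization, and from the cubic remainder are absorbed, yields $L^2$ exponential decay. For $H^1$ (resp. $H^2$) one then applies the same functional to $w = \partial_t y$ (resp. $\partial_t^2 y$): differentiating \eqref{eq:syst_y} in $t$ produces a system of the same structure for $w$ (the linear $K_n$ commute with $\partial_t$, so the feedback, continuity and Kirchhoff relations are preserved) with a forcing quadratic in $(y,w)$; the resulting bound, combined with the PDE read as $A_i \partial_x y_i = -\partial_t y_i - \overline{B}_i y_i + \overline{g}_i(x,y_i)$, recovers the spatial derivatives.

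The principal obstacle is the one flagged in the abstract: $\overline{B}_i$ is indefinite and not arbitrarily small, so the standard perturbative argument does not apply. My plan sidesteps this by working in physical coordinates, where the skew-symmetry of $\mathrm{diag}(\mathbf{M}_i,\mathbf{C}_i)\overline{B}_i$ makes the term invisible at the energy level. The most delicate technical point is then the joint calibration across edges of the weights $p_i$ (and of $\mu$) so that the multiple-node cancellation, the simple-node dissipation, and the sign of the interior weight term all hold simultaneously, uniformly in the small-data regime delivered by Theorem \ref{thm:well-posedness}.
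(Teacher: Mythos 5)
Your overall architecture -- a quadratic Lyapunov functional in the physical variables built around the energy density $\mathrm{diag}(\mathbf{M}_i,\mathbf{C}_i)$, the observation that $\mathrm{diag}(\mathbf{M}_i,\mathbf{C}_i)\overline{B}_i$ is skew-symmetric so the non-small lower-order term drops out, the node-by-node flux analysis with cancellation at the free joint and dissipation at the controlled simple nodes, and the bootstrap to $\mathbf{H}^1_x,\mathbf{H}^2_x$ via $\partial_t y,\partial_t^2 y$ -- is exactly the paper's strategy, and your nodal computations (including the power-conservation identity at the multiple node) are correct. The gap is in the interior term. With your scalar-weight ansatz $\overline{Q}_i = p_i\,\mathrm{diag}(\mathbf{M}_i,\mathbf{C}_i)$, the matrix whose negativity you need is
\begin{equation*}
\overline{S}_i \;=\; \tfrac{\mathrm{d}}{\mathrm{d}x}\bigl(\overline{Q}_iA_i\bigr)-\overline{Q}_i\overline{B}_i-\overline{B}_i^\intercal\overline{Q}_i \;=\; \frac{\mathrm{d}p_i}{\mathrm{d}x}\begin{bmatrix}\mathds{O}_6 & -\mathds{I}_6\\ -\mathds{I}_6 & \mathds{O}_6\end{bmatrix},
\end{equation*}
which has eigenvalues $\pm\frac{\mathrm{d}p_i}{\mathrm{d}x}$ and is therefore indefinite for \emph{every} nonconstant $p_i$. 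Passing to Riemann coordinates does not rescue this: using \eqref{eq:relation_r_y} one finds $-2p_i'\,v_i^\intercal z_i=-\tfrac{p_i'}{2}\bigl(\langle r_i^-,D_i^{-1}r_i^-\rangle-\langle r_i^+,D_i^{-1}r_i^+\rangle\bigr)$, a \emph{difference} of squares, not a term comparable to $-\mu V_0$. The left- and right-going families require weights of opposite monotonicity (the usual $e^{\pm\mu x}$ of Bastin--Coron), and a single scalar factor multiplying the full energy density weights both families identically, so no calibration of $\mu$ or of the $p_i$ can close the estimate.

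The missing ingredient is the off-diagonal block in the symmetrizer: the paper takes $\overline{Q}_i=\rho\,\mathrm{diag}(\mathbf{M}_i,\mathbf{C}_i)+w_i\bigl[\begin{smallmatrix}\mathds{O}_6&\mathbf{W}_i\\ \mathbf{W}_i^\intercal&\mathds{O}_6\end{smallmatrix}\bigr]$ with $\mathbf{W}_i$ chosen so that $\mathbf{W}_i\mathbf{C}_i^{-1}$ and $\mathbf{W}_i^\intercal\mathbf{M}_i^{-1}$ are symmetric positive definite. Then $\frac{\mathrm{d}}{\mathrm{d}x}(\overline{Q}_iA_i)=-\frac{\mathrm{d}w_i}{\mathrm{d}x}\,\mathrm{diag}(\mathbf{W}_i\mathbf{C}_i^{-1},\mathbf{W}_i^\intercal\mathbf{M}_i^{-1})+O(|w_i|)$ is negative definite for $w_i$ increasing and $\frac{\mathrm{d}w_i}{\mathrm{d}x}$ large relative to $|w_i|$ (Lemma \ref{lem:exist_g}); in diagonal variables this corresponds precisely to weighting $r_i^-$ by $\rho+w_i$ and $r_i^+$ by $\rho-w_i$, i.e.\ to the two opposite-monotonicity weights your ansatz cannot produce. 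The price is that two of your simplifications are lost and must be handled: the cubic term $\langle y_i,\overline{Q}_i\,\overline{g}_i(\cdot,y_i)\rangle$ no longer vanishes (it is absorbed using $\|y\|_{\mathbf{C}^0_x}\le\delta$ small, as in \eqref{eq:dtbarL0i_g}), and the boundary fluxes acquire the sign-indefinite pieces $w_i\langle v_i,\Lambda_i^{\mathrm{I}}v_i\rangle+w_i\langle z_i,\Lambda_i^{\mathrm{II}}z_i\rangle$, which force the sign conditions $w_1(\ell_1)\le 0$, $w_i(0)\ge 0$ at the joint and the largeness condition on $\rho$ at the controlled simple nodes -- this is where the star-shaped topology with all simple nodes controlled is genuinely used.
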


To prove Theorem \ref{thm:stabilization}, in Section \ref{sec:stab}, in the spirit of Bastin \& Coron \cite{BC2016, bastin2017exponential}, we find a so-called \emph{quadratic Lyapunov functional}: we work directly with the \emph{physical system} \eqref{eq:syst_y} (instead of this system in diagonal form), and use our knowledge of the energy of the beam and the coefficients $A_i, \bar{B}_i$ $(i\in\mathcal{I})$ to choose this functional.

\begin{remark} \label{rem:extention}
As pointed out in Subsection \ref{subsec:contrib}, Theorem \ref{thm:stabilization} yields an extension of our previous work \cite{RL2019}, when one considers a single beam $($i.e. $\mathcal{N}_M = \emptyset)$ clamped or free $($i.e. $K_0 = \mathds{O}_6)$ at $x = 0$, with a control applied at $x = \ell_1$ $($i.e. $K_1$ symmetric positive definite$)$.
This previous article was concerned with prismatic, isotropic beams, with principal axis aligned with the body-attached basis -- which amounts to assuming that $\mathbf{M}_i, \mathbf{C}_i$ $(i \in \mathcal{I})$ are, in addition, both \emph{constant} and \emph{diagonal}.
\end{remark}

\subsection{Brief state of the art}

Numerous works have been carried out on the stabilization of tree-shaped networks of d'Alembert wave equations \cite{DagerZuazuaBook}, by means of velocity feedback controls applied at some nodes. 
In \cite{ValeinZuazua2009, Zuazua2012}, the control is located in one single simple node and stability properties (e.g. polynomial) are proved by making use of suitable observability inequalities. 
In \cite{NicaiseValein2007} the exponential stabilization is obtained by applying velocity feedback controls with delay at the multiple nodes.
In \cite{AlabauPerrollazRosier2015}, the authors apply transparent boundary conditions at all simple nodes in addition to velocity feedback controls at the multiple nodes, in order to obtain finite time stabilization. For the exponential stabilization of star-shaped networks using spectral methods, we refer to \cite{GuoXu2011} where the controls are applied at the multiple nodes and all simple nodes but one, and \cite{ZhangXu2013} where the controls are applied at all simple nodes but one. In \cite{GugatSigalotti2019}, the authors showed that, for star-shaped networks, finite time stability is achieved by applying velocity feedback controls at all simple nodes, and that exponential stability is still achieved if one of the controls is removed from time to time.
Applying the controls at all simple nodes but one, \cite{LLS} also studies the exponential stabilization of tree-shaped networks of strings, as well as that of beams.

The stabilization of beam networks has also been considered by \cite{HanXu2011}, who applied time-delay controls at all the simple nodes of a star-shaped network of Timoshenko beams, to obtain exponential stability via spectral methods.
In \cite{ZhangXuMastorakis2009}, by means of semigroup theory and spectral analysis, the exponential stability of a tree-shaped network of Euler-Bernoulli beams is proved, when all simple nodes are clamped while velocity feedback controls are applied at the interior nodes. 
Also using spectral methods to study exponential stabilization, \cite{XuHanYung2007} considered serially connected Timoshenko beams, applying velocity feedback controls at all nodes except one simple node, while \cite{HanXu2010} considered a specific star-shaped network of Timoshenko beams, where velocity feedback controls are applied all simple nodes but one.
The interested reader is also referred to the references therein \cite{HanXu2010, HanXu2011, XuHanYung2007, ZhangXuMastorakis2009}.

Stabilization problems for networks of first order hyperbolic systems have also been extensively studied, in particular for the Saint-Venant equations -- e.g. \cite{AlabauPerrollazRosier2015}, as well as \cite{coron2003} and \cite{LeugeringSchmidt2002} which both make use of the Li-Greenberg Theorem \cite[Chap. 5, Th. 1.3]{Li_blue_book} to obtain the exponential decay result.
Since the tree-shaped network system may be rewritten as a single hyperbolic system -- as done for example in \cite{Coron2007} where exponential stabilization is then proved by means of a Lyapunov functional -- the literature on such systems is also of interest here, see \cite{BC2016,bastin2017exponential, gugat2018, hayat2018exponential, HertyYu2018, Li_Duke85}.

\section{Mechanical setting and derivation of nodal conditions}
\label{sec:beam_model}

Let us clarify the definition of the unknowns, coefficients, and the derivation of the nodal conditions.

\subsection{Description of the beam}
\label{subsec:beam_description}

Let $\{e_j\}_{j=1}^3$ $= \{(1, 0, 0)^\intercal$, $(0, 1, 0)^\intercal$, $(0, 0, 1)^\intercal\}$. 
Let $T>0$, $i \in \mathcal{I}$ and consider the $i$-th beam.

This beam is idealized as a \emph{reference line} -- that we also call \emph{centerline} -- and a family of \emph{cross sections}.
At rest, before deformation, the position of the centerline $p_i \colon [0, \ell_i] \rightarrow \mathbb{R}^3$ and the orientation of the cross sections, are both known. The latter is given by the columns $\{b_i^j\}_{j=1}^3$ of a rotation matrix $R_i \colon [0, \ell_i] \rightarrow \mathrm{SO}(3)$. We assume that $b_i^1 = \frac{\mathrm{d}p_i}{\mathrm{d}x}$, implying that $p_i$ is parametrized by its arclength. 
At any time $t>0$, the position $\mathbf{p}_i \colon [0, \ell]\times [0, T] \rightarrow \mathbb{R}^3$ of the centerline and the orientation of the cross sections, given by the columns $\{\mathbf{b}_i^j\}_{j=1}^3$ of a rotation matrix $\mathbf{R}_i \colon [0, \ell_i]\times[0, T] \rightarrow \mathrm{SO}(3)$, are both unknown.
As shear deformation is allowed, $\mathbf{b}_i^1$ is not necessarily tangent to the centerline.

Let $\Omega_{s}^i \subset \mathbb{R}^3$ be the straight, untwisted beam whose centerline is located at $x e_1$ for $x \in [0, \ell_i]$; it may be written as $\Omega_{s}^i = \bigcup_{x \in [0,\ell]}\mathfrak{a}_i(x)$ where $\mathfrak{a}_i(x)$ is the cross section intersecting the centerline at $x e_1$.
Then, the beam before deformation takes the form $\Omega_c^i = \{\overline{p}_i(X) \colon X \in \Omega_s^i\}$ while the beam at time $t>0$ takes the form $\Omega_t^i = \{\overline{\mathbf{p}}_i(X, t) \colon X \in \Omega_s^i\}$, where $\bar{p}_i$ and $\bar{\mathbf{p}}_i$ are defined by $\overline{p}_i(X) = p_i(x) + R_i(x)(\zeta_2 e_2 + \zeta_3 e_3)$ and $\overline{\mathbf{p}}_i(X,t) = \mathbf{p}_i(x,t) + \mathbf{R}_i(x,t)(\zeta_2 e_2 + \zeta_3 e_3)$, using the notation $X = (x,\zeta_2,\zeta_3)^\intercal$ for any $X \in \Omega_{s}$.
We call $\Omega_s^i$, $\Omega_c^i$ and $\Omega_t^i$ the  \emph{straight-reference} configuration, \emph{curved-reference} configuration and \emph{current} configuration of the beam (see Fig. \ref{fig:config_beam}), respectively.

\begin{figure}
\includegraphics[scale=0.8]{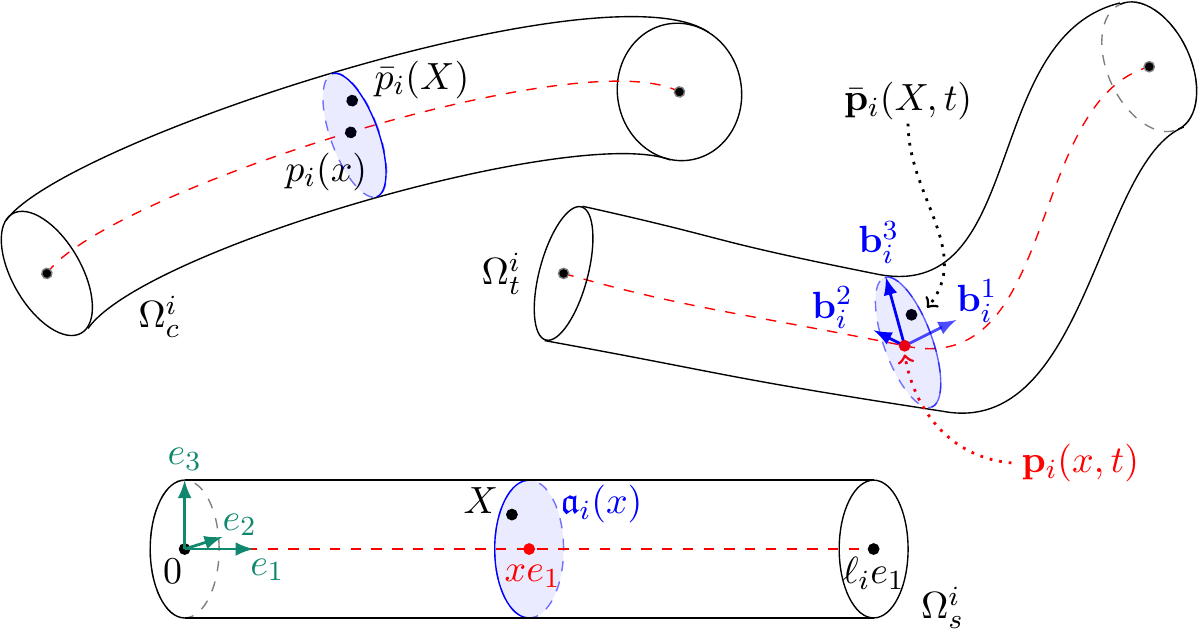}
\caption{The $i$-th beam in its different configurations $\Omega_s^i$, $\Omega_c^i$ and $\Omega_t^i$.}
\label{fig:config_beam}
\end{figure}

\begin{remark}[Body-attached variable]
\label{rem:body_attached_var}
The set $\{\mathbf{b}_i^j(x,t)\}_{j=1}^3$ can be seen as a body-attached (moving with time) basis, with origin $\mathbf{p}_i(x,t)$, for any $x \in [0, \ell_i]$ and $t\in [0, T]$. Hence, here, we consider two kinds of coordinate systems: $\{e_j\}_{j=1}^3$ which is fixed in space and time, and the body-attached basis $\{\mathbf{b}_i^j\}_{j=1}^3$. 
We then make the difference between two kinds of vectors in $\mathbb{R}^3$: global and body-attached.
Consider two vectors $u := \sum_{j=1}^3 u_j e_j$ and $U:=\sum_{j=1}^3 U_j e_j$ of $\mathbb{R}^3$, 
the former being a \emph{global} vector and the latter being the \emph{body-attached} representation of $u$. By this, we mean that the components of $u$ are its coordinates with respect to the global basis $\{e_j\}_{j=1}^3$, while the components of $U$ are coordinates of the vector $u$ with respect to the body-attached basis $\{\mathbf{b}_i^j\}_{j=1}^3$. In other words $u = \sum_{j=1}^3 U_j \mathbf{b}_i^j $. Both vectors are then related by the identity $u = \mathbf{R}_i U$ since $\mathbf{b}_i^j = \mathbf{R}_i e_j$, and we may also call $u$ the \emph{global} representation of $U$.
\end{remark}

In fact, the unknown state of the IGEB model is composed of such \emph{body-attached variables}.
We have seen in \eqref{eq:unknown_physical} that the unknown state $y_i$ of \eqref{eq:syst_y} consists of the velocities $v_i$ and internal forces/moments $z_i$. More precisely, they are (see also Section \ref{sec:intro})
\begin{equation} \label{eq:def_vi_si}
v_i = \begin{bmatrix}
V_i \\ W_i
\end{bmatrix}, \quad z_i = \begin{bmatrix}
\Phi_i \\ \Psi_i
\end{bmatrix}
\end{equation}
where $V_i, W_i, \Phi_i, \Psi_i \colon [0, \ell_i ]\times[0, T] \rightarrow \mathbb{R}^3$ are body-attached variables:
the \emph{linear velocity}, the \emph{angular velocity}, the \emph{internal forces} and the \emph{internal moments} of the beam, respectively.
Then, $v_i, z_i$ are related to $\mathbf{p}_i, \mathbf{R}_i$ as follows (see Footnote \ref{foot:cross_prod}):
\begin{equation}\label{eq:def_VWGU}
\begin{aligned}
V_i &= \mathbf{R}_i^\intercal \partial_t \mathbf{p}_i,\\
W_i &= \mathrm{vec}(\mathbf{R}_i^\intercal \partial_t\mathbf{R}_i),
\end{aligned} \qquad \quad
\begin{aligned}
\begin{bmatrix}
\Phi_i \\ \Psi_i
\end{bmatrix} = \mathbf{C}_i^{-1}  \begin{bmatrix}
\mathbf{R}_i^\intercal \partial_x \mathbf{p}_i  - e_1 \\ \mathrm{vec}\big(\mathbf{R}_i^\intercal \partial_x \mathbf{R}_i - R_i^\intercal \tfrac{\mathrm{d}}{\mathrm{d}x} R_i\big)
\end{bmatrix}.
\end{aligned}
\end{equation}
The \textit{initial curvature-twist matrix} $\mathbf{E}_i$, appearing in the definition of $\overline{B}_i$ (see \eqref{eq:def_Ai_barBi}), is defined by
\begin{equation} \label{eq:def_E_bold}
\mathbf{E}_i = \begin{bmatrix}
\widehat{\Upsilon}_c^i &  \mathds{O}_3\\
\widehat{e}_1 & \widehat{\Upsilon}_c^i
\end{bmatrix}, \qquad \text{with} \ \ \Upsilon_c^i = \mathrm{vec}\big(R_i^\intercal \tfrac{\mathrm{d}}{\mathrm{d}x} R_i \big),
\end{equation}
where $\Upsilon_c^i \colon [0, \ell_i] \rightarrow \mathbb{R}^3$ is the \emph{curvature} of the beam in the curved-reference configuration (i.e. before deformation).
If the beam is straight and untwisted with centerline $p_i(x) = xe_1$ before deformation, then $R_i$ is the identity matrix and $\Upsilon_c^i = 0$.

\subsection{Derivation of the nodal conditions}
\label{subsec:derivation_nodalCond}
Let us now explain how we derived the nodal conditions
\eqref{eq:cont_cond},\eqref{eq:Kirchhoff_cond},\eqref{eq:neum_cond_n},\eqref{eq:neum_cond_0} and \eqref{eq:diri_cond_0n}. Let $T>0$.

\subsubsection{The continuity condition}

Let $n \in \mathcal{N}_M$. 
It is assumed that incident beams -- which have indices in $\mathcal{I}_n \cup \{n\}$ -- stay attached and that the angles between them remain the same, at all times. In terms of positions and rotations, this writes as
\begin{linenomath}
\begin{align} 
\mathbf{p}_i(0, t)  = \mathbf{p}_n(\ell_n, t), \qquad \qquad \ \ \, \text{for all }t\in [0, T], \ i \in \mathcal{I}_n, \label{eq:cont_centerline}\\
\mathbf{R}_i(0, t) R_{i}(0)^\intercal = \mathbf{R}_n(\ell_n, t) R_{n}(\ell_n)^\intercal,  \quad \text{for all }t\in [0, T], \ i \in \mathcal{I}_n, \label{eq:rigid_joint}
\end{align}
\end{linenomath}
respectively (see also \cite{strohm_dissert}). Indeed, \eqref{eq:rigid_joint} translates to the fact that the change of angle $\mathbf{R}_i R_i^\intercal$ between the curved-reference and current configurations is the same for all incident beams.
Differentiating in time \eqref{eq:cont_centerline} and \eqref{eq:rigid_joint}, we have
\begin{equation} \label{eq:cont_centerline_rigid_joint_delt}
\partial_t\mathbf{p}_i(0, t)  = \partial_t\mathbf{p}_n(\ell_n, t), \qquad
\partial_t \mathbf{R}_i(0, t) R_{i}(0)^\intercal = \partial_t \mathbf{R}_n(\ell_n, t) R_{n}(\ell_n)^\intercal.
\end{equation}
Left-multiplying the left-hand sides (and the right-hand sides) in \eqref{eq:cont_centerline_rigid_joint_delt} by the transposed left-hand side (resp. right-hand side) of \eqref{eq:rigid_joint}, one obtains
\begin{eqnarray*}
&R_{i}(0) \mathbf{R}_i(0, t)^\intercal \partial_t \mathbf{p}_i(0, t) = R_{n}(\ell_n) \mathbf{R}_n(\ell_n, t)^\intercal\partial_t \mathbf{p}_n(\ell_n, t)\\
& \quad R_{i}(0) \mathbf{R}_i(0, t)^\intercal \partial_t \mathbf{R}_i(0, t) R_{i}(0)^\intercal = R_{n}(\ell_n) \mathbf{R}_n(\ell_n, t)^\intercal \partial_t \mathbf{R}_n(\ell_n, t) R_{n}(\ell_n)^\intercal.
\end{eqnarray*}
By the definition of $V_i, W_i$ and the invariance of the cross-product in $\mathbb{R}^3$ under rotation, these two systems also write as $R_i(0) V_i(0, t) = R_n(\ell_n) V(\ell_n, t)$ and $R_i(0) W_i(0, t) = R_n(\ell_n) W_n(\ell_n, t)$. As $\overline{R}_{i}$ is defined by \eqref{eq:def_barRi}, we have obtained \eqref{eq:cont_cond}.

\subsubsection{The Kirchhoff condition}

For any $i \in \mathcal{I}$, let us denote by $\phi_i, \psi_i \colon [0, \ell_i]\times [0, T] \rightarrow \mathbb{R}^3$ the \emph{(global) internal forces} and \emph{moments} respectively, and their body-attached counterparts by $\Phi_i, \Psi_i \colon [0, \ell_i]\times [0, T] \rightarrow \mathbb{R}^3$ respectively. As explained in Remark \ref{rem:body_attached_var}, they are related by the identities $\Phi_i = \mathbf{R}_i^\intercal \phi_i$ and $\Psi_i = \mathbf{R}_i^\intercal \psi_i$. 
Similarly, for any node $n \in \mathcal{N}$, denote by $\phi_n^\text{load}, \psi_n^\text{load} \colon [0, T] \rightarrow \mathbb{R}^3$ the \emph{(global) external load} applied at this node, and their body-attached counterparts by $\Phi_n^\text{load}, \Psi_n^\text{load} \colon [0, T] \rightarrow \mathbb{R}^3$, related by the identities
\begin{linenomath}
\begin{equation*}
\Phi_n^\text{load} = \begin{cases}
\mathbf{R}_n(\ell_n, \cdot)^\intercal \phi_n^\text{load}, &\text{if }n \neq 0\\
 \mathbf{R}_1(0, \cdot)^\intercal \phi_0^\text{load}, & \text{if }n=0
\end{cases}, \quad \ \Psi_n^\text{load} = \begin{cases}
\mathbf{R}_n(\ell_n, \cdot)^\intercal \psi_n^\text{load}, &\text{if }n \neq 0\\
 \mathbf{R}_1(0, \cdot)^\intercal \psi_0^\text{load}, & \text{if }n=0.
\end{cases}
\end{equation*}
\end{linenomath}
For any \emph{multiple} node $n \in \mathcal{N}_M$, we require the forces and moments exerted to this node by incident beams to be balanced with the external load applied at this node, meaning that for all $t \in [0, T]$ one has
\begin{equation} \label{eq:node_balance_forces_glob}
\phi_{n}(\ell_n, t) - \sum_{i \in \mathcal{I}_n} \phi_{i}(0, t) = \phi_n^\text{load}(t), \qquad
\psi_{n}(\ell_n, t) - \sum_{i \in \mathcal{I}_n} \psi_{i}(0, t)
=\psi_n^\text{load}(t).
\end{equation}
Using the rigid joint assumption \eqref{eq:rigid_joint} and \eqref{eq:def_vi_si}, we deduce that \eqref{eq:node_balance_forces_glob} is equivalent to
\begin{linenomath}
\begin{equation*}
\overline{R}_{n}(\ell_n) z_n(\ell_n, t)  - \sum_{i \in \mathcal{I}_n} \overline{R}_i(0) z_i(0, t) = \overline{R}_{n}(\ell_n) \begin{bmatrix}
\Phi_n^\text{load}(t)\\ \Psi_n^\text{load}(t)
\end{bmatrix}, \qquad \text{for all }t \in [0, T].
\end{equation*}
\end{linenomath}
As presented in Subsection \ref{subsec:the_model}, either a velocity feedback control is applied at this node $(\Phi_n^\text{load}(t)^\intercal, \Psi_n^\text{load}(t)^\intercal)^ \intercal = - K_n v_n(\ell_n, t)$, with $K_n \in \mathbb{R}^{6 \times 6}$ symmetric positive definite, or no external load is applied at this node, which means that $\phi_n^\mathrm{load} \equiv 0$ and $\psi_n^\mathrm{load} \equiv 0$ but may also be written as $K_n=\mathds{O}_n$. Hence, we have obtained \eqref{eq:Kirchhoff_cond}.

\subsubsection{Conditions at the simple nodes} Let $n \in \mathcal{N}_S$.
Similarly to the Kirchhoff condition, the balance between internal forces/moments and external loads is required. It takes the form
\begin{linenomath}
\begin{align}
\phi_{n}(\ell_n, t) = \phi_n^\text{load}(t), \quad \ \psi_{n}(\ell_n, t) = \psi_n^\text{load}(t), \qquad &\text{for all }t \in [0, T], \ \text{if } n \neq 0 \label{eq:balance_simple_n}\\
-\phi_{1}(0, t) = \phi_0^\text{load}(t), \quad -\psi_{1}(0, t) = \psi_0^\text{load}(t), \qquad &\text{for all }t \in [0, T], \ \text{if } n=0.\label{eq:balance_simple_n=0}
\end{align}
\end{linenomath}
Left-multiplying the systems in \eqref{eq:balance_simple_n} and \eqref{eq:balance_simple_n=0} by $\mathbf{R}_n(\ell_n)^\intercal$ and $\mathbf{R}_1(0)^\intercal$ respectively, and using \eqref{eq:def_vi_si}, we obtain
\begin{linenomath}
\begin{equation*}
z_n(\ell_n, t) = \begin{bmatrix}
\Phi_n^\text{load}(t)\\ \Psi_n^\text{load}(t)
\end{bmatrix}, \quad \text{if }n\neq 0, \qquad \quad  - z_1(0, t) = \begin{bmatrix}
\Phi_0^\text{load}(t)\\ \Psi_0^\text{load}(t)
\end{bmatrix}, \quad \text{if }n=0.
\end{equation*}
\end{linenomath}
The nodal conditions \eqref{eq:neum_cond_n} and \eqref{eq:neum_cond_0} result from applying the following controls 
\begin{linenomath}
\begin{equation*}
\begin{bmatrix}
\Phi_n^\text{load}(t)\\ \Psi_n^\text{load}(t)
\end{bmatrix} 
= 
\begin{cases}
- K_n v_n(\ell_n, t) &  \text{if } n \neq 0\\
- K_0 v_1(0, t) & \text{if }n=0,
\end{cases}
\end{equation*}
\end{linenomath}
with $K_n \in \mathbb{R}^{6 \times 6}$ symmetric positive definite. If no load is applied at the node, meaning that $\phi_n^\mathrm{load} \equiv 0$ and $\psi_n^\mathrm{load} \equiv 0$, then we set $K_n = \mathds{O}_6$.

If the beam is clamped at the node $n$, position and rotation are both independent of time at this node. Namely, for constant $h_n^\mathbf{p} \in \mathbb{R}^3$ and $h_n^\mathbf{R}\in \mathbb{R}^{3 \times 3}$, we set
\begin{equation*}
\begin{aligned}
\mathbf{p}_n(\ell_n, t) &= h_n^\mathbf{p}, \qquad \mathbf{R}_n(\ell_n, t) = h_n^\mathbf{R}, \qquad \text{for all }t \in [0, T], \text{ if }n\neq 0,\\
\mathbf{p}_1(0, t) &= h_0^\mathbf{p}, \qquad \ \hspace{0.075cm} \mathbf{R}_1(0, t) = h_0^\mathbf{R}, \qquad \hspace{0.025cm} \text{for all }t \in [0, T],\text{ if }n= 0,
\end{aligned}
\end{equation*}
which yields \eqref{eq:diri_cond_0n}, by definition of $v_i$ (see \eqref{eq:def_vi_si}-\eqref{eq:def_VWGU}).

\section{Riemann invariants and Proof of Theorem \ref{thm:well-posedness}}
\label{sec:riem_wellp}

In this section, we first write System \eqref{eq:syst_y} in diagonal form, before proving the well-posedness result. Let us first comment on the hyperbolicity of $A_i$ ($i\in\mathcal{I}$).

\subsubsection*{Hyperbolicity of the system}

Let $i\in \mathcal{I}$ and $x \in [0, \ell_i]$. One may quickly verify that $A_i(x)$ (see \eqref{eq:def_Ai_barBi}) has six positive and six negative real eigenvalues, for any $\mathbf{M}_i, \mathbf{C}_i$ having values in the set of positive definite symmetric matrices.
Indeed, one may study the zeros of $\mathrm{det}(\lambda \mathds{I}_{12} - A_i)$, where we drop the argument $x$ for clarity. Some computations yield that it is equal to $\mathrm{det}(\lambda^2 \mathds{I}_6 - (\mathbf{C}_i \mathbf{M}_i)^{-1})$, which reduces the problem to showing that $(\mathbf{C}_i \mathbf{M}_i)^{-1}$ has are real, positive eigenvalues only. The latter matrix also writes as $(\mathbf{C}_i \mathbf{M}_i)^{-1} =  \mathbf{C}_i^{-\sfrac{1}{2}} \Theta_i \mathbf{C}_i^{\sfrac{1}{2}}$, with $\Theta_i$ defined by \eqref{eq:def_Thetai}, implying that it has the same eigenvalues as $\Theta_i$ since $\mathbf{C}_i$ is invertible -- all are real and positive as $\Theta_i$ is symmetric positive definite. 
Hence, $(\mathbf{C}_i \mathbf{M}_i)^{-1}$ is possibly not positive definite, but has necessarily real, positive eigenvalues.

Further to this, Assumption \ref{as:mass_flex}, by ensuring a certain regularity of its eigenvalues and eigenvectors, permits to obtain the following lemma.

\begin{lemma} \label{lem:diag_Ai}
Let Assumption \ref{as:mass_flex} be fulfilled for some $m \in \{1, 2, \ldots\}$ and, for $i \in \mathcal{I}$, let $U_i, D_i \in C^{m}([0, \ell_i]; \mathbb{R}^{6\times 6})$ be the matrices introduced in Assumption \ref{as:mass_flex}.
Then, $A_i \in C^{m}([0, \ell_i]; \mathbb{R}^{12\times 12})$ which is defined in \eqref{eq:def_Ai_barBi}, may be diagonalized as follows. In $[0, \ell_i]$, one has $A_i = L_i^{-1} \mathbf{D}_i L_i$, where $\mathbf{D}_{i}, L_i \in C^m( [0, \ell_i]; \mathbb{R}^{12 \times 12})$ are defined by
\begin{equation} \label{eq:def_bfDi_Li}
\mathbf{D}_i = \mathrm{diag}(-D_{i}, D_{i}), \qquad L_i = \begin{bmatrix}
U_i \mathbf{C}_i^{-\sfrac{1}{2}} & D_{i}U_i \mathbf{C}_i^{\sfrac{1}{2}} \\
U_i \mathbf{C}_i^{-\sfrac{1}{2}} & - D_{i}U_i \mathbf{C}_i^{\sfrac{1}{2}}
\end{bmatrix}.
\end{equation}
\end{lemma}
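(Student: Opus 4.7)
The plan is to verify the factorization $L_i A_i = \mathbf{D}_i L_i$ directly by block multiplication, reducing the only nontrivial identity to the spectral relation $U_i \Theta_i = D_i^2 U_i$ furnished by Assumption~\ref{as:mass_flex}\,\ref{item:MCassump_reg} and the second part of the assumption.

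Before the algebra, I would record the regularity of every object. Since $\mathbf{C}_i$ is $C^m$ with values in the open cone of symmetric positive definite matrices, both its inverse and its square roots $\mathbf{C}_i^{\pm 1/2}$ inherit $C^m$ regularity, and the same holds for $\mathbf{M}_i$; hence $A_i \in C^m$ directly from \eqref{eq:def_Ai_barBi}. The matrix $\Theta_i$ is continuous and pointwise positive definite on the compact interval $[0,\ell_i]$, so its eigenvalues are bounded below by a positive constant, which gives $D_i^{-1} \in C^m$. Combined with $U_i, D_i \in C^m$, this yields $\mathbf{D}_i, L_i \in C^m([0,\ell_i]; \mathbb{R}^{12\times 12})$.

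For the invertibility of $L_i$, I would exhibit the explicit formula
\begin{equation*}
L_i^{-1} = \tfrac{1}{2}\begin{bmatrix} \mathbf{C}_i^{1/2} U_i^\intercal & \mathbf{C}_i^{1/2} U_i^\intercal \\ \mathbf{C}_i^{-1/2} U_i^\intercal D_i^{-1} & -\mathbf{C}_i^{-1/2} U_i^\intercal D_i^{-1} \end{bmatrix},
\end{equation*}
and check $L_i L_i^{-1} = \mathds{I}_{12}$ block-by-block: each of the four $6 \times 6$ blocks collapses to a combination of $U_i U_i^\intercal = \mathds{I}_6$ (unitarity of $U_i$) and $D_i D_i^{-1} = \mathds{I}_6$. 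This also shows $L_i^{-1} \in C^m$.

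The final step is computing the two sides of the target identity. Using $\mathbf{C}_i^{1/2} \mathbf{C}_i^{-1} = \mathbf{C}_i^{-1/2}$, the first column block of $L_i A_i$ equals $\mp D_i U_i \mathbf{C}_i^{-1/2}$ (top and bottom row respectively), and its second column block equals $-U_i \mathbf{C}_i^{-1/2} \mathbf{M}_i^{-1}$ in both rows. A direct computation shows that the first column block of $\mathbf{D}_i L_i$ agrees, while its second column block equals $-D_i^2 U_i \mathbf{C}_i^{1/2}$ in both rows. Equality $L_i A_i = \mathbf{D}_i L_i$ therefore reduces, after right-multiplication by $\mathbf{C}_i^{-1/2}$, to $U_i \Theta_i = D_i^2 U_i$, which is immediate from \eqref{eq:diag_Thetai} and $U_i U_i^\intercal = \mathds{I}_6$. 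The only substantive step is this last reduction; everything else is block bookkeeping dictated by the specific form of $L_i$, so I do not anticipate any conceptual obstacle.
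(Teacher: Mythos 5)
Your proof is correct and follows essentially the same route as the paper: both arguments rest on the spectral decomposition $\Theta_i = U_i^\intercal D_i^2 U_i$ from Assumption~\ref{as:mass_flex} and verify the conjugation identity together with the explicit inverse $L_i^{-1}$ by direct block computation (the paper merely organizes the same computation through the intermediate factorization $L_i = L_{0i}Q_iP_i$ with $P_i = \mathrm{diag}(\mathbf{C}_i^{-1},\mathbf{C}_i)^{\sfrac{1}{2}}$ and $Q_i = \mathrm{diag}(U_i,U_i)$). Your explicit formula for $L_i^{-1}$ matches the one in the paper, and your regularity bookkeeping is consistent with the remark following the lemma.
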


\begin{proof}
Let $i\in \mathcal{I}$ and $x \in [0, \ell_i]$. Here, we drop again the argument $x$ to lighten the notation. 
Being symmetric and positive definite, $\Theta_i$ may always be written as  $\Theta_i = U_i^\intercal D_i^2 U_i$ where $D_i$ is a positive definite diagonal matrix whose diagonal entries are the square roots of the eigenvalues of $\Theta_i$ and $U_i$ is a unitary matrix, and 
Assumption \ref{as:mass_flex} ensures that such $D_i$, $U_i$ with regularity $C^m([0, \ell_i]; \mathbb{R}^{6 \times 6})$ exist.
Let us define the matrices $P_i = \mathrm{diag}(\mathbf{C}_i^{-1}, \mathbf{C}_i)^{\sfrac{1}{2}}$ and $Q_i = \mathrm{diag}(U_i, U_i)$. Then, a few computations lead to the expression $A_i =  \big(L_{0i}  Q_i P_i \big)^{-1} \mathbf{D}_i \big(L_{0i}  Q_i P_i \big)$,
where $L_{0i}$ and its inverse are given by
\begin{linenomath}
\begin{equation*}
L_{0i} = \begin{bmatrix}
\mathds{I}_6 & D_i \\
\mathds{I}_6 & - D_i
\end{bmatrix}, \qquad 
L_{0i}^{-1} = \frac{1}{2} \begin{bmatrix}
\mathds{I}_6 & \mathds{I}_6\\
D_i^{-1} & - D_i^{-1}
\end{bmatrix}.
\end{equation*}
\end{linenomath}
The matrix $L_i$ defined in \eqref{eq:def_bfDi_Li} corresponds to $L_i = L_{0i} Q_i P_i$. Its inverse $L_i^{-1} = P_i^{-1} Q_i^{-1} L_{0i}^{-1}$ takes the form
\begin{linenomath}
\begin{equation*} 
L_i^{-1} = \frac{1}{2} \begin{bmatrix}
\mathbf{C}_i^{\sfrac{1}{2}} U_i^\intercal & \mathbf{C}_i^{\sfrac{1}{2}} U_i^\intercal \\
\mathbf{C}_i^{-\sfrac{1}{2}} U_i^\intercal D_i^{-1} & - \mathbf{C}_i^{-\sfrac{1}{2}} U_i^\intercal D_i^{-1}
\end{bmatrix}.\qedhere
\end{equation*}
\end{linenomath}
\end{proof}

\begin{remark}
The regularity of $L_i, L_i^{-1}$ and $\Theta_i$(see \eqref{eq:def_Thetai}) follows from that of $D_i, U_i$  -- provided by Assumption \ref{as:mass_flex} -- and from the fact that for any $m \in \{0, 1, \ldots\}$, $d \in \{1, 2, \ldots\}$ and any function $M \in C^m( [a, b]; \mathbb{R}^{d \times d})$ having values in the set of positive definite symmetric matrices, both $M^{-1}$ and $M^{\sfrac{1}{2}}$ are as regular as $M$.
\end{remark}

\subsection{System written in Riemann invariants}
\label{subsec:riem}

Lemma \ref{lem:diag_Ai} being established, we can write System \eqref{eq:syst_y} in diagonal form by applying the change of variable 
\begin{equation} \label{eq:change_var_Li}
r_i = L_iy_i, \qquad \text{for all }i \in \mathcal{I}.
\end{equation}
The first (resp. last) six components of $r_i$ correspond to the negative (resp. positive) eigenvalues of $A_i$, this is why we use the notation
\begin{linenomath}
\begin{equation*}
r_i = \begin{bmatrix}
r_i^- \\
r_i^+ 
\end{bmatrix}, \qquad r_i^- ,\, r_i^+  \colon [0, \ell_i]\times [0, T] \rightarrow \mathbb{R}^6,
\end{equation*}
\end{linenomath}
for all $i \in \mathcal{I}$.
More explicitly, $y_i$ and $r_i$ are related as follows:
\begin{equation} \label{eq:relation_r_y}
\begin{aligned}
r_i^-  &= U_i \mathbf{C}_i^{-\sfrac{1}{2}}( v_i + \mathbf{C}_i^{\sfrac{1}{2}} \mathbf{M}_i^{-1} \mathbf{C}_i^{\sfrac{1}{2}} z_i), & v_i &= \frac{1}{2}\mathbf{C}_i^{\sfrac{1}{2}} U_i^\intercal(r_i^- +r_i^+ ),\\
r_i^+  &= U_i\mathbf{C}_i^{-\sfrac{1}{2}}( v_i - \mathbf{C}_i^{\sfrac{1}{2}} \mathbf{M}_i^{-1} \mathbf{C}_i^{\sfrac{1}{2}} z_i), &
z_i &= \frac{1}{2} \mathbf{C}_i^{-\sfrac{1}{2}}U_i^\intercal D_{i}^{-1} (r_i^- -r_i^+ ).
\end{aligned}
\end{equation}

When applying \eqref{eq:change_var_Li} to the governing equations of \eqref{eq:syst_y}, we obtain the following governing system for the new unknown state $r = (r_1, \ldots, r_N)$:
\begin{equation} \label{eq:gov_riem}
\partial_t r_i + \mathbf{D}_i(x) \partial_x r_i + B_i(x) r_i = g_i (x,r_i), \qquad \text{for all }i \in \mathcal{I},
\end{equation}
where $B_i$ is defined by $B_i(x) = L_i(x) \overline{B}_i(x) L_i(x)^{-1} + L_i(x) A_i(x) \frac{\mathrm{d}}{\mathrm{d}x}L_i^{-1}(x)$ and the source $g_i$ is defined by $g_i(x,\mathbf{r}) = L_i(x) \overline{g}_i(x,L_i(x)^{-1} \mathbf{r} )$, for all $i \in \mathcal{I}$, $x \in [0, \ell_i]$ and $\mathbf{r} \in \mathbb{R}^{12}$.
The initial datum for this problem is $r_i^0 = L_i y_i^0$. 

\begin{remark}
Note that $B_i \in C^k([0, \ell_i]; \mathbb{R}^{12 \times 12})$ under Assumption \ref{as:mass_flex} with $m = k+1$.
\end{remark}

It remains to apply the change of variable to the nodal conditions.
Later on, in order to study the well-posedness of System \eqref{eq:syst_y}, we will verify that, at each node $n$ of the network, the \emph{outgoing information}, denoted $r^\mathrm{out}_n$, may be expressed explicitly as a function of the \emph{incoming information}, denoted $r^\mathrm{in}_n$. 
We define $r^\mathrm{out}_n$ and $r^\mathrm{in}_n$ at this stage in order to make use of this notation, here, to write the new nodal conditions. Let us first define the notion of outgoing/incoming information.

\begin{definition}\label{def:out_in_info}
Let $\ell >0$. Consider a semilinear hyperbolic system of the form $\partial_t \zeta + \Lambda(x) \partial_x \zeta + M(x, \zeta) \zeta = 0$ when it is written in Riemann invariants. More precisely, $\Lambda = \mathrm{diag}(\Lambda_-, \Lambda_+)$ where $\Lambda_-$ (resp. $\Lambda_+$) has values in the set of negative (resp. positive) definite diagonal matrices of size $m$ (resp. $d-m$), for some $m\leq d$ belonging to $\{1, 2, \ldots\}$. Here, the unknown state is $\zeta \colon [0, \ell]\times[0, T]\rightarrow \mathbb{R}^d$, and $\zeta = ((\zeta^-)^\intercal, (\zeta^+)^\intercal)^\intercal$ where $\zeta^- \in \mathbb{R}^m$ (resp. $\zeta^+ \in \mathbb{R}^{d-m}$) are the components of $\zeta$ corresponding to the negative (resp. positive) diagonal entries of $\Lambda$.

The \emph{outgoing information} consists of the components of $\zeta$ corresponding to characteristics which are outgoing at the boundaries $x=\ell$ and $x=0$ (in other words, going into the domain $[0, \ell]\times [0, T]$ from outside of it): these are $\zeta^{-}(\ell, t)$ and $\zeta^{+}(0, t)$ respectively. Likewise, we mean by \emph{incoming information}, the components of $\zeta$ corresponding to characteristics which are incoming at the boundaries $x=\ell$ and $x=0$ (going out of $[0, \ell]\times[0, T]$ from inside): these are $\zeta^{+}(\ell, t)$ and $\zeta^{-}(0, t)$ respectively. We refer to Fig. \ref{fig:charac} for visualization.
\end{definition}

\begin{figure}
\includegraphics[scale=0.75]{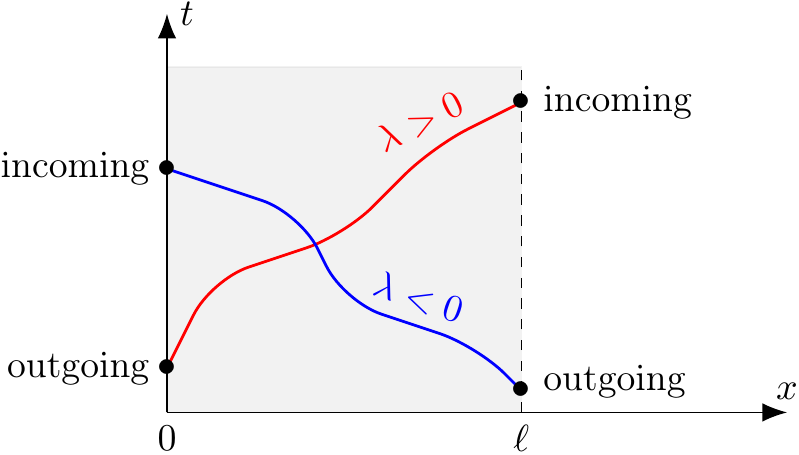}
\caption{Characteristic curves $(\mathbf{x}(t), t)$ with $\frac{\mathrm{d}\mathbf{x}}{\mathrm{d}t}(t) = \lambda(\mathbf{x}(t))$, where either $\lambda(s)>0$ or $\lambda(s)<0$ for all $s \in [0, \ell]$.}
\label{fig:charac}
\end{figure}

In the sense of Definition \ref{def:out_in_info}, for any beam $i\in \mathcal{I}$, the outgoing information is $r_i^- (\ell, t)$ and $r_i^+ (0, t)$, while the incoming information is $r_i^+ (\ell, t)$ and $r_i^- (0, t)$, and consequently, for any node $n \in \mathcal{N}$, the outgoing and incoming information $r^\mathrm{out}_n, r^\mathrm{in}_n \colon [0, T] \rightarrow \mathbb{R}^{6k_n}$ are given by (see \eqref{eq:nota_In_indices}-\eqref{eq:def_kn})
\begin{equation} \label{eq:r_out_in_mult_simp}
r^\mathrm{out}_n =
\begin{cases}
\left[\begin{smallmatrix}
r_n^- (\ell_n, \cdot)\\
r_{i_2}^+ (0, \cdot)\\
\vdots\\
r_{i_{k_n}}^+ (0, \cdot)
\end{smallmatrix}\right], & n \in \mathcal{N}_M\\
r_n^- (\ell_n, \cdot), & n \in \mathcal{N}_S \setminus \{0\}\\
r_1^+(0, \cdot), & n = 0
\end{cases}, \ 
r^\mathrm{in}_n =
\begin{cases}
\left[\begin{smallmatrix}
r_n^+(\ell_n, \cdot)\\
r_{i_2}^- (0, \cdot)\\
\vdots\\
r_{i_{k_n}}^- (0, \cdot)
\end{smallmatrix}\right], & n \in \mathcal{N}_M\\
r_n^+(\ell_n, \cdot), &n \in  \mathcal{N}_S \setminus \{0\}\\
r_1^-(0, \cdot), & n = 0.
\end{cases}
\end{equation}
Furthermore, to write concisely the nodal conditions, we define, for all $n \in \mathcal{N}$, the $6 \times 6$ invertible matrices
\begin{equation} \label{eq:def_gammai}
\begin{aligned}
\gamma_0^0 &= \big(\overline{R}_1 \mathbf{C}_1^{\sfrac{1}{2}} U_1^\intercal\big)\big(0\big),\qquad  \ \text{ if } n = 0,\\
\gamma_n^n &= \big(\overline{R}_n \mathbf{C}_n^{\sfrac{1}{2}} U_n^\intercal\big)\big(\ell_n\big), \qquad  \text{ if } n \neq 0,\\
\gamma_i^n &= \big(\overline{R}_i \mathbf{C}_i^{\sfrac{1}{2}} U_i^\intercal\big)\big(0\big), \quad \qquad \text{for all }i \in \mathcal{I}_n, \text{ if } n \in \mathcal{N}_M,
\end{aligned}
\end{equation}
the $6 \times 6$ symmetric matrices
\begin{equation}\label{eq:def_barKn}
\begin{aligned}
\overline{K}_0 &= \overline{R}_1(0) K_0 \overline{R}_1(0)^\intercal, \qquad  \quad \text{ if } n = 0,\\
\overline{K}_n &= \overline{R}_n(\ell_n) K_n \overline{R}_n(\ell_n)^\intercal,\qquad  \text{ if } n \neq 0,
\end{aligned}
\end{equation}
which are positive definite (resp. null) if and only if $K_n$ is positive definite (resp. $K_n = \mathds{O}_6$), and the $6 \times 6$ positive definite symmetric matrices
\begin{equation}\label{eq:def_sigmai}
\begin{aligned}
\sigma_0^0 &= \big(\overline{R}_1 \mathbf{C}_1^{-\sfrac{1}{2}} U_1^\intercal D_1^{-1} U_1 \mathbf{C}_1^{-\sfrac{1}{2}} \overline{R}_1^\intercal\big)\big(0\big), \ \ \qquad  \text{ if } n = 0,\\
\sigma_n^n &= \big(\overline{R}_n \mathbf{C}_n^{-\sfrac{1}{2}} U_n^\intercal D_n^{-1} U_n \mathbf{C}_n^{-\sfrac{1}{2}} \overline{R}_n^\intercal\big)\big(\ell_n\big),\qquad  \text{ if } n \neq 0,\\
\sigma_i^n &= \big(\overline{R}_i \mathbf{C}_i^{-\sfrac{1}{2}} U_i^\intercal D_i^{-1} U_i \mathbf{C}_i^{-\sfrac{1}{2}} \overline{R}_i^\intercal\big)\big(0\big), \ \quad \qquad \text{for all }i \in \mathcal{I}_n, \text{ if } n \in \mathcal{N}_M.
\end{aligned}
\end{equation}

Let us now apply \eqref{eq:change_var_Li} to the nodal conditions, starting with the transmission conditions. Let $n \in \mathcal{N}_M$.
Injecting \eqref{eq:relation_r_y} in \eqref{eq:cont_cond}, the continuity condition writes as
\begin{equation} \label{eq:cont_Riem_1}
\gamma_i^n (r_i^- +r_i^+ )(0, t) = \gamma_n^n (r_n^- + r_n^+)(\ell_n, t), \qquad \text{for all }t \in [0, T], \ i \in \mathcal{I}_n.
\end{equation}
Since $\overline{R}_i \mathbf{C}_i^{-\sfrac{1}{2}}U_i^\intercal D_{i}^{-1} = \sigma_i^n \gamma_i^n$ (for any $i \in \mathcal{I}_n\cup \{n\}$) and $\overline{R}_n K_n \mathbf{C}_n^{\sfrac{1}{2}} U_n^\intercal = k_n \overline{K}_n \gamma_n^n$ by definition, we deduce that the 
Kirchhoff condition \eqref{eq:Kirchhoff_cond} takes the form
\begin{linenomath}
\begin{equation*}
\sigma_n^n \gamma_n^n (r_n^- - r_n^+)(\ell_n, t) - \sum_{i \in \mathcal{I}_n}  \sigma_i^n \gamma_i^n (r_i^-  - r_i^+ )(0, t) = - \overline{K}_n \gamma_n^n (r_n^-+r_n^+)(\ell_n, t).
\end{equation*}
\end{linenomath}
Gathering the outgoing information on the left-hand side, the equivalent expression
\begin{equation} \label{eq:Kirch_alphan_betan}
\alpha^n r^\mathrm{out}_n (t) = \beta^n r^\mathrm{in}_n(t)
\end{equation}
is obtained, where the \emph{rectangular} matrices $\alpha^n, \beta^n \in \mathbb{R}^{6 \times 6k_n}$ are defined by
\begin{equation}\label{eq:def_alphan_betan}
\begin{aligned}
\alpha^n &= \begin{bmatrix}
(\sigma_n^n + \overline{K}_n)\gamma_n^n & \sigma_{i_2}^n \gamma_{i_2}^n &  \sigma_{i_3}^n \gamma_{i_3}^n & \ldots & \sigma_{i_{k_n}}^n \gamma_{i_{k_n}}^n
\end{bmatrix},\\
\beta^n &= \begin{bmatrix}
(\sigma_n^n - \overline{K}_n)\gamma_n^n & \sigma_{i_2}^n \gamma_{i_2}^n &  \sigma_{i_3}^n \gamma_{i_3}^n & \ldots & \sigma_{i_{k_n}}^n \gamma_{i_{k_n}}^n
\end{bmatrix}.
\end{aligned} 
\end{equation}

We now turn to the simple nodes $n \in \mathcal{N}_S$.
Similarly to the Kirchhoff condition, injecting \eqref{eq:relation_r_y} in \eqref{eq:neum_cond_n} and in \eqref{eq:neum_cond_0}, we obtain
\begin{linenomath}
\begin{align}\label{eq:Neu_Riem_1_xelln_before}
\sigma_n^n \gamma_n^n (r_n^- - r_n^+)(\ell_n, t) &= - \overline{K}_n \gamma_n^n (r_n^- + r_n^+)(\ell_n, t),  && \text{if }n \neq 0 \\
\sigma^0_0 \gamma^0_0 (r_1^-  - r_1^+)(0, t) &= \overline{K}_0 \gamma^0_0 (r_1^-  + r_1^+)(0, t),  && \text{if }n = 0, \label{eq:Neu_Riem_1_x0_before}
\end{align}
\end{linenomath}
respectively.
For any $n \in \mathcal{N}$, $(\sigma_n^n + \overline{K}_n)$ is symmetric and positive definite (hence invertible), allowing us to gather the outgoing information on the left-hand side and obtain the equivalent (to \eqref{eq:Neu_Riem_1_xelln_before}, \eqref{eq:Neu_Riem_1_x0_before} resp.) expressions
\begin{linenomath}
\begin{align} \label{eq:Neu_Riem_1_xelln}
r_n^-(\ell_n, t) &=  ((\sigma_n^n + \overline{K}_n)\gamma_n^n)^{-1}(\sigma_n^n - \overline{K}_n)\gamma_n^n \, r_n^+(\ell_n, t), && \text{if }n \neq 0\\
 r_1^+(0, t) &= ( (\sigma_0^0 + \overline{K}_0)\gamma_0^0)^{-1} (\sigma_0^0 - \overline{K}_0)\gamma_0^0 \, r_1^- (0, t), && \text{if }n = 0. \label{eq:Neu_Riem_1_x0}
\end{align}
\end{linenomath}
Then, for a free beam (i.e. $K_n = \mathds{O}_6$), \eqref{eq:Neu_Riem_1_xelln} and \eqref{eq:Neu_Riem_1_x0} take the form $r_n^\mathrm{out}(t) = r_n^\mathrm{in}(t)$.

Using \eqref{eq:gov_riem},\eqref{eq:cont_Riem_1},\eqref{eq:Kirch_alphan_betan},\eqref{eq:Neu_Riem_1_xelln} and \eqref{eq:Neu_Riem_1_x0}, the system in Riemann invariants writes as
\begin{equation} \label{eq:syst_Riem_1}
\begin{dcases}
\partial_t r_i + \mathbf{D}_i(x) \partial_x r_i + B_i(x) r_i = g_i (x,r_i) 
&\text{in } (0, \ell_i)\times(0, T), \, i \in \mathcal{I}\\
\gamma_i^n (r_i^-  + r_i^+ )(0, t) = \gamma_n^n (r_n^- + r_n^+)(\ell_n, t)
&t \in (0, T), \, i \in \mathcal{I}_n, \, n \in \mathcal{N}_M\\
\alpha^n r^\mathrm{out}_n (t) = \beta^n r^\mathrm{in}_n(t)
& t \in (0, T), \, n \in \mathcal{N}_M \\
r^\mathrm{out}_n(t)  = ((\sigma_n^n + \overline{K}_n)\gamma_n^n)^{-1}(\sigma_n^n - \overline{K}_n)\gamma_n^n r^\mathrm{in}_n(t)
&t \in (0, T), \, n \in \mathcal{N}_S\\
r_i(x, 0) = r_i^0(x)  &x\in (0, \ell_i), \, i \in \mathcal{I}.
\end{dcases}
\end{equation}

Note that, for $n \in \mathcal{N}_S$, when injecting \eqref{eq:relation_r_y} in the Dirichlet conditions \eqref{eq:diri_cond_0n}, they take the form $(\overline{R}_1 \mathbf{C}_1^{\sfrac{1}{2}} U_1^\intercal (r_1^-  + r_1^+))(0, t) = 0$ for $n=0$, or $(\overline{R}_n \mathbf{C}_n^{\sfrac{1}{2}} U_n^\intercal (r_n^- + r_n^+))(\ell_n, t) = 0$ for $n \neq 0$, which both write equivalently as $r^\mathrm{out}_n(t) = - r^\mathrm{in}_n(t)$.

\begin{remark}[Transparent conditions] 
For \emph{simple} nodes $n \in \mathcal{N}_S$, the specific choice
\begin{linenomath}
\begin{equation*}
\begin{aligned}
K_n = \big(\mathbf{C}_n^{-\sfrac{1}{2}} (\mathbf{C}_n^{\sfrac{1}{2}} \mathbf{M}_n \mathbf{C}_n^{\sfrac{1}{2}})^{\sfrac{1}{2}}\mathbf{C}_n^{-\sfrac{1}{2}}\big)\big(\ell_n\big), \qquad &\text{if }n\neq 0,\\
K_0 = \big(\mathbf{C}_1^{-\sfrac{1}{2}} (\mathbf{C}_1^{\sfrac{1}{2}} \mathbf{M}_1 \mathbf{C}_1^{\sfrac{1}{2}})^{\sfrac{1}{2}}\mathbf{C}_1^{-\sfrac{1}{2}}\big)\big(0\big), \qquad &\text{if }n = 0,
\end{aligned}
\end{equation*}
\end{linenomath}
leads to the nodal conditions $r_n^\mathrm{out}(t) = 0$, for all $t \in [0, T]$, where there is no outgoing information at the node $n$ -- as \eqref{eq:def_Thetai},\eqref{eq:diag_Thetai},\eqref{eq:def_barKn},\eqref{eq:def_sigmai} imply that $\overline{K}_n = \sigma^n_n$.
\end{remark}

\subsection{Proof of Theorem \ref{thm:well-posedness}}
\label{subsec:wellposedness}

To show that System \eqref{eq:syst_y} is well-posed, we will apply the following theorem, of which a more general version may be found in \cite[Ap. B, Rem. 6.9]{BC2016} and \cite[Thm. 10.1]{bastin2017exponential} -- for the cases $k=1$ and $k=2$ respectively.

\begin{theorem}[Bastin \& Coron 2016]
\label{thm:bastin_coron_well_posed}
Let $\ell>0$, $d \in \{1, 2, \ldots\}$ and $k \in \{1, 2\}$ be fixed, and denote $H^k_x = H^k(0, \ell; \mathbb{R}^d)$. For a matrix $K \in \mathbb{R}^{d\times d}$, and functions $\Lambda \in C^k([0, \ell]; \mathbb{R}^{d\times d})$ and $M \in C^k([0, \ell]\times \mathbb{R}^d; \mathbb{R}^{d\times d})$, consider the following system written in Riemann invariants -- which has been partially introduced in Definition \ref{def:out_in_info} -- where $\zeta^\mathrm{out},\zeta^\mathrm{in}$ contain the outgoing and incoming information:
\begin{equation}\label{eq:coron_thm_syst}
\begin{cases}
\partial_t \zeta + \Lambda(x) \partial_x \zeta + M(x, \zeta)\zeta = 0 &\text{in }(0, \ell) \times (0, T)\\
\zeta^\mathrm{out}(t) = K \zeta^{\mathrm{in}}(t) &t \in (0, T)\\
\zeta(x, 0) = \zeta^0(x) &x \in (0, \ell).
\end{cases}
\end{equation}
Then, there exists $\delta_0>0$ such that for any $\zeta^0 \in {H}_x^k$ satisfying the $(k-1)$-order compatibility conditions of \eqref{eq:coron_thm_syst} (defined analogously to Definition \ref{def:comp_cond}) and $\|\zeta^0\|_{{H}_x^k} \leq \delta_0$, there exists a unique solution $\zeta \in C^0([0, T); {H}_x^k)$ to System \eqref{eq:coron_thm_syst}, for some $T \in (0, +\infty]$. Moreover, if $\|\zeta(\cdot, t)\|_{{H}_x^k} \leq \delta_0$ for all $t \in [0, T)$, then $T = + \infty$.
\end{theorem}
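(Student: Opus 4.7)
The plan is to combine the method of characteristics for the linear transport part with a Banach fixed-point iteration for the semilinear source, and to close an $H^k$ a priori estimate that underwrites both the local existence step and a standard continuation argument.

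First, I would study the linear inhomogeneous system $\partial_t \zeta + \Lambda(x)\partial_x \zeta = f(x,t)$ with the same boundary condition $\zeta^{\mathrm{out}}(t) = K\zeta^{\mathrm{in}}(t)$ and initial datum $\zeta^0$. Because $\Lambda = \mathrm{diag}(\Lambda_-,\Lambda_+)$ is block-diagonal with entries bounded away from zero on $[0,\ell]$, each scalar component of $\zeta$ is transported along its own characteristic at a strictly signed speed. Tracing characteristics backwards from an interior point $(x_0,t_0)$, one either reaches the initial line $\{t=0\}$ or one of the lateral boundaries $\{x=0,\ell\}$, where the relation $\zeta^{\mathrm{out}} = K\zeta^{\mathrm{in}}$ prescribes the outgoing trace in terms of incoming traces already determined at earlier times. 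For a time horizon controlled by $\ell/\|\Lambda\|_\infty$ only finitely many reflections occur, which yields an explicit representation formula and well-posedness of the linear problem in $C^0([0,T];H^k_x)$ under the $(k-1)$-order compatibility conditions of \eqref{eq:coron_thm_syst}.

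Second, for the semilinear problem I would set up a Picard iteration $\zeta^{(n+1)} = \mathcal{F}[\zeta^{(n)}]$, with $\mathcal{F}[\psi]$ defined as the solution of the linear system above with source $f = -M(\cdot,\psi)\psi$. Since $H^k(0,\ell)$ is a Banach algebra for $k\ge 1$ in one space dimension and $M\in C^k$, the substitution map $\psi \mapsto M(\cdot,\psi)\psi$ is locally Lipschitz from $H^k_x$ into itself with a constant that vanishes as $\|\psi\|_{H^k_x}\to 0$. Combined with the linear gain in time, this makes $\mathcal{F}$ leave invariant and contract the ball of radius $\delta_0$ in $C^0([0,T^*];H^k_x)$ for $\delta_0$ small and $T^* = T^*(\delta_0)$ short, producing a unique local solution.

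Third, the technical core, and the hardest step, is an $H^k$ a priori estimate that does not deteriorate in time so long as $\|\zeta(\cdot,t)\|_{H^k_x}\le \delta_0$. The main obstacle is controlling higher derivatives through the boundary condition: differentiating in $x$ destroys the diagonal structure at the boundary, whereas differentiating in $t$ preserves it. I would therefore work with the time derivatives $\partial_t\zeta$ and, when $k=2$, $\partial_t^2\zeta$, each of which satisfies a hyperbolic system of the same diagonal type with boundary condition $(\partial_t^j\zeta)^{\mathrm{out}} = K(\partial_t^j\zeta)^{\mathrm{in}}$ and nonlinear source involving $M$, $M_\zeta$ and lower-order products. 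Spatial derivatives are recovered algebraically from the equation using the invertibility of $\Lambda$, namely $\partial_x\zeta = -\Lambda^{-1}(\partial_t\zeta + M(\cdot,\zeta)\zeta)$, and analogously for $\partial_x^2\zeta$ and $\partial_x\partial_t\zeta$ when $k=2$. Standard $L^2$ energy identities obtained by multiplying each equation by $\zeta$, $\partial_t\zeta$, $\partial_t^2\zeta$ and integrating by parts produce boundary contributions of the form $\bigl[\zeta^\top P \zeta\bigr]_0^\ell$ which, after substituting $\zeta^{\mathrm{out}}=K\zeta^{\mathrm{in}}$, are quadratic in the incoming traces with a constant depending only on $|K|$, $|\Lambda|$; together with cubic nonlinear contributions in $\|\zeta\|_{H^k_x}$ they close a Gronwall-type inequality on any subinterval on which $\|\zeta\|_{H^k_x}\le \delta_0$. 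The $(k-1)$-order compatibility conditions enter here to guarantee that $\partial_t^j\zeta(\cdot,0)$ lie in the correct Sobolev spaces and are themselves compatible with the boundary relation at $t=0$.

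Finally, the maximal-time statement follows by a standard bootstrap: let $T\in(0,+\infty]$ be the supremum of existence times of an $H^k_x$ solution. If $T<+\infty$ and $\|\zeta(\cdot,t)\|_{H^k_x}\le\delta_0$ for all $t<T$, the a priori estimate together with weak compactness provides a limit $\zeta(\cdot,T)\in H^k_x$ which still satisfies the compatibility conditions, since the boundary relation and equation propagate them; reapplying the local existence step at time $T$ extends the solution past $T$, contradicting maximality, so $T = +\infty$.
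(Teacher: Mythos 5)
This statement is imported from the literature: the paper gives no proof of it, only the citations \cite{BC2016} and \cite{bastin2017exponential}, so there is no in-paper argument to compare yours against line by line. Your outline is nonetheless a faithful reconstruction of the strategy used in those references: solve the linear transport problem along characteristics with reflections governed by $\zeta^{\mathrm{out}}=K\zeta^{\mathrm{in}}$, run a Picard/contraction argument for the semilinear source using that $H^k(0,\ell)$ is a Banach algebra in one space dimension, close the $H^k$ a priori estimate by differentiating in \emph{time} (so that the diagonal boundary relation is preserved) and recovering spatial derivatives algebraically from the equation via $\Lambda^{-1}$, and conclude with a standard continuation bootstrap. One small slip: the Lipschitz constant of $\psi\mapsto M(\cdot,\psi)\psi$ on a ball of $H^k_x$ does \emph{not} vanish as $\|\psi\|_{H^k_x}\to 0$, because of the linear part $M(\cdot,0)\psi$; the contraction must instead be obtained by shrinking the time horizon $T^*$ (which the gain in time of the linear Duhamel step already provides), so your conclusion stands but the stated reason is slightly off. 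The remaining points --- the role of the $(k-1)$-order compatibility conditions at the corner $\{t=0\}\times\{x=0,\ell\}$, and their propagation to time $T$ in the continuation step --- are handled correctly.
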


To apply Theorem \ref{thm:bastin_coron_well_posed}, on the one hand we will write System \eqref{eq:syst_Riem_1} as a single larger hyperbolic system -- hence, we will beforehand apply a change of variable to obtain the same spatial interval for all the beams of the network -- and, on the other hand we will verify that the boundary conditions of \eqref{eq:syst_Riem_1} fit in the framework of Theorem \ref{thm:bastin_coron_well_posed}. We can see in \eqref{eq:syst_Riem_1} that at the simple nodes, the outgoing information $r_n^\mathrm{out}$ is expressed explicitly as a function of the incoming information $r_n^\mathrm{in}$. However, this property remains to be verified for multiple nodes, which is the object of Lemma \ref{lem:calB_n} given below.
Let us first introduce some notation for multiple nodes $n \in \mathcal{N}_M$. We define the positive definite matrix $\sigma^n \in \mathbb{R}^{6 k_n \times 6k_n}$ by
\begin{equation} \label{eq:def_sigman_sum}
\sigma^n = \sigma_n^n + {\textstyle \sum_{j=2}^{k_n}} \sigma_{i_j}^n,
\end{equation}
the block diagonal matrices $\mathbf{G}_n, \mathrm{diag}(\sigma^n + \overline{K}_n), \mathrm{diag}(\sigma_i^n) \in \mathbb{R}^{6k_n \times 6k_n}$ by
\begin{equation}\label{eq:def_block_diag}
\begin{aligned}
&\mathbf{G}_n = \mathrm{diag}\big(\gamma_n^n, \ \gamma^n_{i_2}, \ \gamma^n_{i_3}, \ \ldots, \ \gamma^n_{i_{k_n}} \big), \quad \mathrm{diag}(\sigma_i^n) =\mathrm{diag}\big(\sigma_n^n, \sigma_{i_2}^n, \ldots, \sigma_{i_{k_n}}^n \big)\\
&\hspace{2cm} \mathrm{diag}(\sigma^n+\overline{K}_n) = \mathrm{diag}\big(\sigma^n + \overline{K}_n , \ldots , \sigma^n + \overline{K}_n\big),
\end{aligned}
\end{equation}
and the square block matrix $\textswab{I}_n \in \mathbb{R}^{6k_n \times 6k_n}$ by 
\begin{equation}\label{eq:def_I_crochet}
\textswab{I}_n = \begin{bmatrix}
\mathds{I}_6 &\ldots & \mathds{I}_6\\
\vdots & \ddots  & \vdots \\
\mathds{I}_6 & \ldots & \mathds{I}_6
\end{bmatrix}.
\end{equation}

\begin{lemma} \label{lem:calB_n}
The nodal conditions of System \eqref{eq:syst_Riem_1} are equivalent to 
\begin{equation} \label{eq:nodal_cond_calBn}
r^{\mathrm{out}}_n(t) = \mathcal{B}_n r^{\mathrm{in}}_n(t), \qquad t \in [0, T], \text{ for all } n \in \mathcal{N},
\end{equation}
where $\mathcal{B}_n \in \mathbb{R}^{6 k_n \times 6 k_n}$ is defined by
\begin{equation} \label{eq:def_calBn}
\mathcal{B}_n =
\begin{dcases}
((\sigma_n^n + \overline{K}_n)\gamma_n^n)^{-1}(\sigma_n^n - \overline{K}_n)\gamma_n^n, & \text{if }n \in \mathcal{N}_S,\\
2 \mathbf{G}_n^{-1}\mathrm{diag}(\sigma^n+\overline{K}_n)^{-1}  
\textswab{I}_n \mathrm{diag}(\sigma_i^n) \mathbf{G}_n -  \mathds{I}_{6k_n}, & \text{if } n \in \mathcal{N}_M.
\end{dcases}
\end{equation}
\end{lemma}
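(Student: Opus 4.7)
The plan is to handle simple and multiple nodes separately. For simple nodes $n \in \mathcal{N}_S$, the equivalence is immediate from the fourth line of \eqref{eq:syst_Riem_1}, which already expresses $r^\mathrm{out}_n$ in precisely the required form; nothing further needs to be done.

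For multiple nodes $n \in \mathcal{N}_M$, the idea is to introduce an auxiliary quantity that encodes the continuity conditions, then use the Kirchhoff condition to eliminate it. Specifically, the $k_n-1$ continuity conditions
\[
\gamma_i^n(r_i^- + r_i^+)(0,t) = \gamma_n^n(r_n^- + r_n^+)(\ell_n,t), \qquad i \in \mathcal{I}_n,
\]
mean that the common value $s_n(t) := \gamma_n^n(r_n^- + r_n^+)(\ell_n,t)$ is well-defined. Solving for the outgoing components gives
\[
r_n^-(\ell_n,t) = (\gamma_n^n)^{-1}s_n(t) - r_n^+(\ell_n,t), \qquad r_i^+(0,t) = (\gamma_i^n)^{-1}s_n(t) - r_i^-(0,t),
\]
which, stacked block-wise according to \eqref{eq:r_out_in_mult_simp}, reads $r^\mathrm{out}_n = \mathbf{G}_n^{-1}\mathbf{s}_n - r^\mathrm{in}_n$, where $\mathbf{s}_n := (s_n^\intercal,\ldots,s_n^\intercal)^\intercal \in \mathbb{R}^{6k_n}$.

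To determine $s_n$ in terms of $r^\mathrm{in}_n$, I would substitute the above expressions for the outgoing components into the Kirchhoff condition in its pre-rearranged form
\[
\sigma_n^n\gamma_n^n(r_n^- - r_n^+)(\ell_n,t) - \sum_{i \in \mathcal{I}_n}\sigma_i^n\gamma_i^n(r_i^- - r_i^+)(0,t) = -\overline{K}_n s_n(t).
\]
After replacing $\gamma_n^n r_n^-(\ell_n)$ by $s_n - \gamma_n^n r_n^+(\ell_n)$ and $\gamma_i^n r_i^+(0)$ by $s_n - \gamma_i^n r_i^-(0)$, and collecting terms using $\sigma^n = \sigma_n^n + \sum_{i\in\mathcal{I}_n}\sigma_i^n$, the identity reduces to
\[
(\sigma^n + \overline{K}_n)s_n(t) = 2\Bigl[\sigma_n^n\gamma_n^n r_n^+(\ell_n,t) + \sum_{i \in \mathcal{I}_n}\sigma_i^n\gamma_i^n r_i^-(0,t)\Bigr].
\]
Since $\sigma^n$ is positive definite (being a sum of positive definite matrices by \eqref{eq:def_sigmai}) and $\overline{K}_n$ is positive semidefinite, the matrix $\sigma^n + \overline{K}_n$ is invertible. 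Recognizing the right-hand side, when duplicated $k_n$ times, as $2\,\textswab{I}_n \mathrm{diag}(\sigma_i^n)\mathbf{G}_n r^\mathrm{in}_n$, yields
\[
\mathbf{s}_n = 2\,\mathrm{diag}(\sigma^n + \overline{K}_n)^{-1}\,\textswab{I}_n\,\mathrm{diag}(\sigma_i^n)\,\mathbf{G}_n\,r^\mathrm{in}_n,
\]
and substituting into $r^\mathrm{out}_n = \mathbf{G}_n^{-1}\mathbf{s}_n - r^\mathrm{in}_n$ produces exactly the formula for $\mathcal{B}_n$ in \eqref{eq:def_calBn}.

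The main obstacle is purely bookkeeping: correctly aligning the block structure of $\mathbf{G}_n$, $\textswab{I}_n$, $\mathrm{diag}(\sigma_i^n)$ and $\mathrm{diag}(\sigma^n + \overline{K}_n)$ introduced in \eqref{eq:def_sigman_sum}--\eqref{eq:def_I_crochet} so that the scalar identity for $s_n$ lifts correctly to the block identity for $\mathbf{s}_n$. For the equivalence (rather than just one implication), I would note that every step above is reversible: given $r^\mathrm{in}_n$ and setting $r^\mathrm{out}_n := \mathcal{B}_n r^\mathrm{in}_n$, one recovers $s_n$ from any of the component relations $\gamma_n^n(r_n^- + r_n^+)(\ell_n) = \gamma_i^n(r_i^- + r_i^+)(0)$ (they all agree by construction of $\mathcal{B}_n$), so the continuity conditions hold, and reversing the Kirchhoff computation shows that \eqref{eq:Kirch_alphan_betan} is satisfied.
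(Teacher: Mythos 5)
Your argument is correct and yields exactly the matrix $\mathcal{B}_n$ of \eqref{eq:def_calBn}, but it is organized differently from the paper's proof. The paper assembles the $k_n-1$ continuity conditions and the Kirchhoff condition into a single $6k_n\times 6k_n$ linear system $\mathbf{A}_n\mathbf{G}_n r^\mathrm{out}_n=\mathbf{B}_n\mathbf{G}_n r^\mathrm{in}_n$ with explicit block matrices $\mathbf{A}_n,\mathbf{B}_n$, proves $\mathbf{A}_n$ invertible (which reduces to the invertibility of $\sigma^n+\overline{K}_n$, the same fact you use), and then computes $\mathbf{A}_n^{-1}$ via the block-inverse/Schur-complement formula before multiplying out $\mathbf{G}_n^{-1}\mathbf{A}_n^{-1}\mathbf{B}_n\mathbf{G}_n$. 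You instead perform the elimination by hand: the auxiliary variable $s_n=\gamma_n^n(r_n^-+r_n^+)(\ell_n,t)$ — which by \eqref{eq:relation_r_y} is just $2(\overline{R}_n v_n)(\ell_n,t)$, the common joint velocity — parametrizes the continuity conditions, and the Kirchhoff condition then collapses to a single $6\times 6$ equation $(\sigma^n+\overline{K}_n)s_n=2[\sigma_n^n\gamma_n^n r_n^+(\ell_n)+\sum_{i\in\mathcal{I}_n}\sigma_i^n\gamma_i^n r_i^-(0)]$. Your route avoids the explicit block inversion and has a cleaner physical reading (solve for the nodal velocity, then back-substitute), and your reversibility remark gives the equivalence more transparently than the paper's invertibility argument; the paper's route, on the other hand, records $\mathbf{A}_n^{-1}$ explicitly, which is reusable if one wants $\mathcal{B}_n$ for other right-hand sides. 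Both hinge on the same single piece of linear algebra, namely that $\sigma^n+\overline{K}_n$ is symmetric positive definite.
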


\begin{remark} \label{rem:calBn_free_clamped}
As seen in Subsection \ref{subsec:riem}, if the beam is free (or clamped) at the node $n \in \mathcal{N}_S$, then $\mathcal{B}_n$ is rather given by $\mathcal{B}_n = \mathds{I}_6$ (resp. $\mathcal{B}_n = - \mathds{I}_6$).
\end{remark}

\begin{proof}[Proof of Lemma \ref{lem:calB_n}]
The case of simple nodes being already solved, we consider the multiple nodes $n \in \mathcal{N}_M$. The continuity condition also writes as
\begin{linenomath}
\begin{equation*}
- \gamma_n^n r_n^-(\ell_n, t) + \gamma^n_{i_j} r_{i_j}^+(0, t) = \gamma^n_n r_n^+(\ell_n, t) - \gamma^n_{i_j} r_{i_j}^-(0,t), \quad \text{for all }2\leq j \leq k_n.
\end{equation*}
\end{linenomath}
Thenè, the combined continuity and Kirchhoff \eqref{eq:Kirch_alphan_betan}-\eqref{eq:def_alphan_betan} conditions are equivalent to
\begin{linenomath}
\begin{equation*}
\mathbf{A}_n \mathbf{G}_n r_\mathrm{out}^n(t) = \mathbf{B}_n \mathbf{G}_n r_\mathrm{in}^n(t),
\end{equation*}
\end{linenomath}
where $\mathbf{A}_n, \mathbf{B}_n \in \mathbb{R}^{6k_n \times 6k_n}$ are defined by
\begin{linenomath}
\begin{equation*}
\mathbf{A}_n = \begin{bmatrix}
\mathbf{a}_n & \mathbf{b}_n\\
\mathbf{c}_n & \mathds{I}_{6(k_n-1)}
\end{bmatrix}, \qquad
\mathbf{B}_n = \begin{bmatrix}
\mathbf{d}_n & \mathbf{b}_n\\
-\mathbf{c}_n & -\mathds{I}_{6(k_n-1)}
\end{bmatrix},
\end{equation*}
\end{linenomath}
with $\mathbf{a}_n = \sigma_n^n + \overline{K}_n$, $\mathbf{b}_n = \begin{bmatrix}
\sigma^n_{i_2} & \sigma^n_{i_3} & \ldots &   \sigma^n_{i_{k_n}}
\end{bmatrix}$, $\mathbf{c}_n = - \begin{bmatrix}
\mathds{I}_6 & \mathds{I}_6 & \ldots & \mathds{I}_6
\end{bmatrix}^\intercal$, and $\mathbf{d}_n =  \sigma_n^n - \overline{K}_n$.
To show that $\mathbf{A}_n$ is invertible, let $\zeta = (\zeta_1^\intercal, \ldots, \zeta_{k_n}^\intercal)^\intercal$, where $\zeta_j \in \mathbb{R}^6$ for all $j \in \{1, \ldots, k_n\}$, and assume that $\mathbf{A}_n\zeta = 0$. Then, $\zeta_1 = \ldots = \zeta_{k_n}$ and $(\sigma^n + \overline{K}_n)\zeta_1 = 0$ (see \eqref{eq:def_sigman_sum}). The matrix $(\sigma^n + \overline{K}_n)$ has only real positive eigenvalues since it is positive definite and symmetric. In particular, it is invertible, implying that $\zeta = 0$, and $\mathbf{A}_n$ is consequently invertible. We set $\mathcal{B}_n = \mathbf{G}_n^{-1}\mathbf{A}_n^{-1}\mathbf{B}_n \mathbf{G}_n$, and \eqref{eq:def_calBn} follows from basic computations and noticing that $\mathbf{A}_n^{-1}$ is given by
\begin{linenomath}
\begin{equation*}
\mathbf{A}_n^{-1} = 
\begin{bmatrix}
(\mathbf{a}_n-\mathbf{b}_n\mathbf{c}_n)^{-1} & -(\mathbf{a}_n-\mathbf{b}_n \mathbf{c}_n)^{-1} \mathbf{b}_n \\
-\mathbf{c}_n (\mathbf{a}_n - \mathbf{b}_n \mathbf{c}_n)^{-1} & \mathds{I}_{6(k_n-1)} + \mathbf{c}_n(\mathbf{a}_n-\mathbf{b}_n \mathbf{c}_n)^{-1} \mathbf{b}_n
\end{bmatrix}.\qedhere
\end{equation*}
\end{linenomath}
\end{proof}


Finally, let us comment on the form of the quadratic  functions $\overline{g}_i$ and $g_i$ ($i \in \mathcal{I}$).
\begin{remark} \label{rem:form_sources}
Let $i \in \mathcal{I}$. For all $x \in [0, \ell_i]$, $\mathbf{u} \in \mathbb{R}^{12}$ and $m \in \{1, \ldots, 12\}$, the components of $\overline{g}_i$ may be written as $(\overline{g}_i(x, \mathbf{u}))_m = \langle \mathbf{u} \,, \overline{G}^m_i(x) \mathbf{u} \rangle$, where $\overline{G}_i^m(x) \in \mathbb{R}^{12 \times 12}$ is a symmetric matrix (whose expression is omitted here). 
Then, the components of $g_i$ also write as $(g_i(x,\mathbf{u}))_m = \left\langle \mathbf{u} \,, G_i^m(x) \mathbf{u} \right \rangle$, for $G_i^m = \sum_{j=1}^{12} (L_i)_{m,j} (L_i^{-1})^\intercal \overline{G}_i^m L_i^{-1}$, denoting by $(L_i)_{m,j}$ the component at the $m$-th row and $j$-th column of $L_i$. 
Consequently, the nonlinearities also write as $\overline{g}_i(x, \mathbf{u}) = \overline{\mathbf{G}}_i(x, \mathbf{u})\mathbf{u}$ and $g_i(x,\mathbf{u}) = \mathbf{G}_i(x,\mathbf{u})\mathbf{u}$, for functions $\overline{\mathbf{G}}_i, \mathbf{G}_i \colon [0, \ell_i]\times \mathbb{R}^{12} \rightarrow \mathbb{R}^{12 \times 12}$ defined by $\overline{\mathbf{G}}_i(x, \mathbf{u}) = [\overline{G}_i^1(x)\mathbf{u}, \ldots, \overline{G}_i^{12}(x)\mathbf{u}]^\intercal$ and $\mathbf{G}_i (x,\mathbf{u})= [ G_i^1(x) \mathbf{u}, \ldots, G_{i}^{12}(x) \mathbf{u}]^\intercal$.
\end{remark}

We are now in position to prove the well-posedness result.

\begin{proof}[Proof of Theorem \ref{thm:well-posedness}]
Let $\ell >0$. For all $i \in \mathcal{I}$, we apply the change of variable $\tilde{r}_i(\xi, t) = r_i(\ell_i \ell^{-1} \xi, t)$ for $\xi \in [0, \ell]$ so that the governing systems have the same spatial domain $[0, \ell]$ for all $i \in \mathcal{I}$.
Then, the dynamics of $\tilde{r}_i \colon (0, \ell)\times(0, T) \rightarrow \mathbb{R}^{12}$ are given by the governing system $\partial_t \tilde{r}_i + \widetilde{\mathbf{D}}_i\partial_\xi \tilde{r}_i + \widetilde{B}_i \tilde{r}_i = \widetilde{g}_i(\cdot, \tilde{r}_i)$, where $\widetilde{\mathbf{D}}_i(\xi) = \ell_i^{-1} \ell \ \mathbf{D}_i(\ell_i \ell^{-1} \xi)$, $\widetilde{B}_i(\xi) = B_i(\ell_i \ell^{-1} \xi)$ and $\widetilde{g}_i(\xi\,, \mathbf{r} \,) = g_i(\ell_i \ell^{-1} \xi\,, \mathbf{r} \,)$, for all $\xi \in (0, \ell)$ and $\mathbf{r} \in \mathbb{R}^{12}$.
The boundary and initial conditions take the form $\tilde{r}_n^{\text{out}}(t) = \mathcal{B}_n \tilde{r}_n^{\text{in}}(t)$ and $\tilde{r}_i(\xi, 0) = \tilde{r}^0_i(\xi)$, for all $t \in (0, T)$ and $\xi \in (0, \ell)$, where the outgoing/incoming information is defined akin to \eqref{eq:r_out_in_mult_simp}, and $\tilde{r}^0_i(\cdot) = r_i^0(\ell_i \ell^{-1} \cdot)$.

Now, \eqref{eq:syst_Riem_1} can be written as a single hyperbolic system of $12N$ equations, with unknown state $\tilde{r} = (\tilde{r}_1^\intercal, \ldots, \tilde{r}_N^\intercal)^\intercal$, 
\begin{equation} \label{eq:syst_samel}
\begin{cases}
\partial_t \tilde{r} + \widetilde{\mathbf{D}}(\xi) \partial_\xi \tilde{r} + \widetilde{B}(\xi) \tilde{r} = \widetilde{g}(\xi, \tilde{r}) &\text{in }(0, \ell) \times (0, T)\\
\tilde{r}^{\text{out}}(t) = \mathcal{B} \, \tilde{r}^{\text{in}}(t) & t \in (0, T)\\
\tilde{r}(\xi, 0) = \tilde{r}^0(\xi) & \xi \in (0, \ell),
\end{cases}
\end{equation}
where $\widetilde{\mathbf{D}} = \mathrm{diag}(\widetilde{\mathbf{D}}_1, \ldots, \widetilde{\mathbf{D}}_N)$, $\widetilde{B} = \mathrm{diag}(\widetilde{B}_1, \ldots, \widetilde{B}_N)$ and $\mathcal{B} = \mathrm{diag}(\mathcal{B}_0, \mathcal{B}_1, \ldots, \mathcal{B}_N)$, the initial datum is $\tilde{r}^0 = ((\tilde{r}^0_1)^\intercal, \ldots, (\tilde{r}^0_N)^\intercal)^\intercal$, the outgoing information $\tilde{r}^\mathrm{out} = ((\tilde{r}^\mathrm{out}_0)^\intercal, \ldots, (\tilde{r}^\mathrm{out}_N)^\intercal)^\intercal$, the incoming information $\tilde{r}^\mathrm{in} = ((\tilde{r}^\mathrm{in}_0)^\intercal, \ldots, (\tilde{r}^\mathrm{in}_N)^\intercal)^\intercal$, and the source is defined by $\widetilde{g}(\xi, \mathbf{r}) = (\widetilde{g}_1(\xi, \mathbf{r}_1)^\intercal,\ldots,\widetilde{g}_N(\xi, \mathbf{r}_N)^\intercal)^\intercal$, for all $\xi \in [0, \ell]$ and $\mathbf{r} = (\mathbf{r}_1^\intercal, \ldots, \mathbf{r}_{N}^\intercal)^\intercal$ with $\mathbf{r}_j \in \mathbb{R}^{12}$ for all $j\in \{1, \ldots, N\}$.
By Remark \ref{rem:form_sources}, the latter function also takes the form $\widetilde{g}(\xi, \mathbf{r}) = \widetilde{\mathbf{G}}(\xi, \mathbf{r})\mathbf{r}$, required in Theorem \ref{thm:bastin_coron_well_posed}, where $\widetilde{\mathbf{G}}(\xi, \mathbf{r}) = \mathrm{diag}(\mathbf{G}_1(\ell_1 \ell^{-1}\xi, \mathbf{r}_1), \ldots, \mathbf{G}_{N}(\ell_N \ell^{-1}\xi, \mathbf{r}_N))$.

Let $k \in \{1, 2\}$.
The $(k-1)$-order compatibility conditions of \eqref{eq:syst_Riem_1} and \eqref{eq:syst_samel} may be defined analogously to Definition \ref{def:comp_cond} and are all equivalent to that of \eqref{eq:syst_y}.
We can now apply Theorem \ref{thm:bastin_coron_well_posed} to System  \eqref{eq:syst_samel}. The obtained result then translates to the existence of $\delta_0>0$ such that for any $r^0 \in \mathbf{H}^k_x$ satisfying $\|r^0\|_{\mathbf{H}^k_x} \leq \delta_0$ and the $(k-1)$-order compatibility conditions of \eqref{eq:syst_Riem_1}, there exists a unique solution $r \in C^0([0, T), \mathbf{H}^k_x)$ to \eqref{eq:syst_Riem_1}, and if $\|r(\cdot, t)\|_{\mathbf{H}^k_x} \leq \delta_0$ for all $t \in [0, T)$ then $T = + \infty$. Since the well-posedness of the diagonal system \eqref{eq:syst_Riem_1} is equivalent to that of the physical system \eqref{eq:syst_y}, we have proved Theorem \ref{thm:well-posedness}.
\end{proof}

\section{Proof of Theorem \ref{thm:stabilization}}
\label{sec:stab}

In Subsection \ref{subsec:stab_P} below, we prove Theorem \ref{thm:stabilization}, using the point of view of the physical system \eqref{eq:syst_y}. We then make some comments about this proof seen from the point of view of the diagonal system \eqref{eq:syst_Riem_1}, in Subsection \ref{subsec:stab_D}.

For any $d \in \{1, 2, \ldots\}$, we denote by $\mathbb{S}^d$ (and $\mathbb{D}^d$) and by $\mathbb{S}^d_{++}$ (and $\mathbb{D}^d_{++}$) the sets of symmetric (resp. diagonal) and positive definite symmetric (resp. positive definite diagonal) real matrices of size $d$, respectively.
Furthermore, we denote $\mathbf{C}^k_{x} = \prod_{i=1}^{N} C^k([0, \ell_i]; \mathbb{R}^{12})$ and $\mathbf{L}^2_{x} = \prod_{i=1}^{N} L^2(0, \ell_i; \mathbb{R}^{12})$, these spaces being endowed with the associated product norms.

\subsection{Point of view of the physical system}
\label{subsec:stab_P}

\subsubsection{Strategy and observations}

Let $k \in \{1, 2\}$.
In order to prove Theorem \ref{thm:stabilization}, we want to find a so-called \emph{quadratic $\mathbf{H}^k_x$ Lyapunov functional}, namely, a functional $\overline{\mathcal{L}}\colon [0, T] \rightarrow [0, +\infty)$ of the form
\begin{equation} \label{eq:def_bar_lyap}
\overline{\mathcal{L}}(t) = \sum_{i \in \mathcal{I}} \sum_{\alpha=0}^k \overline{\mathcal{L}}_{\alpha i}(t), \quad \text{with} \ \ \overline{\mathcal{L}}_{\alpha i}(t) := \int_0^{\ell_i} \left\langle \partial_t^\alpha y_i (x,t) \,, \overline{Q}_i(x) \partial_t^\alpha y_i (x,t) \right\rangle dx,
\end{equation}
where $\overline{Q}_i \in C^1([0, \ell_i]; \mathbb{S}^{12})$ and $y \in C^0([0, T]; \mathbf{H}_x^k)$ is solution to \eqref{eq:syst_y}, such that $\overline{\mathcal{L}}$ fulfills the assumptions of Proposition \ref{prop:existence_Lyap}, given below\footnote{As $y_i \in C^0([0, T]; H^k(0, \ell_i; \mathbb{R}^{12}))$ solves \eqref{eq:syst_y}, the governing system yields that, for all $\alpha \in \{0, \ldots, k\}$, $\partial_t^\alpha y_i$ belongs to $C^0([0, T]; L^2(0, \ell_i; \mathbb{R}^{12}))$. Hence, $\overline{\mathcal{L}}(t)$ is well defined for all $t \in [0, T ]$.}. In other words, the functional should be such that: when the solution $y$ is in some small ball of $C^0([0, T];\mathbf{C}^{k-1}_{x})$, $\overline{\mathcal{L}}(t)$ is equivalent to the $\mathbf{H}_x^k$ norm of $y(\cdot, t)$ and decays exponentially with time.
Since the arguments used to obtain the latter proposition follow closely \cite{BC2016, bastin2017exponential}, we do not provide a proof here.

\begin{proposition} \label{prop:existence_Lyap}
Let $k \in \{1, 2\}$. Assume that for any fixed $T>0$, there exists $\delta>0$ such that the following holds: if $y \in C^0([0, T]; \mathbf{H}^k_x)$ is solution to \eqref{eq:syst_y} and fulfills $\|y(\cdot, t)\|_{\mathbf{C}^{k-1}_{x}} \leq \delta$ for all $t \in [0, T]$, then there exists $\eta \geq 1$ and $\beta >0$ such that 
\begin{linenomath}
\begin{align}
\eta^{-1} \|y(\cdot, t)\|_{\mathbf{H}^k_x}^2 &\leq \overline{\mathcal{L}}(t) \leq \eta \|y(\cdot, t)\|_{\mathbf{H}^k_x}^2,  &&\text{for all } t \in [0, T] \label{eq:lyap_equiv}\\
\overline{\mathcal{L}}(t) &\leq e^{-2 \beta t} \overline{\mathcal{L}}(0), &&\text{for all }t \in [0, T]. \label{eq:lyap_decay}
\end{align}
\end{linenomath}
Then, the steady state $y\equiv 0$ of \eqref{eq:syst_y} is locally $\mathbf{H}^k_x$ exponentially stable.
\end{proposition}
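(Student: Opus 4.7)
The plan is a standard continuation/bootstrap argument that converts the assumed a priori exponential decay of $\overline{\mathcal{L}}$ into both global existence and exponential stability, using Theorem~\ref{thm:well-posedness} to supply the local solution and Sobolev embedding to relate $\mathbf{C}^{k-1}_x$-smallness to $\mathbf{H}^k_x$-smallness. The argument follows the pattern in \cite[Ch.~4 \& 6]{BC2016}.

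First, I would fix $k \in \{1,2\}$ and prepare constants. Let $\delta_0$ be the smallness threshold from Theorem~\ref{thm:well-posedness}, and let $C_S \geq 1$ satisfy $\|w\|_{\mathbf{C}^{k-1}_x} \leq C_S \|w\|_{\mathbf{H}^k_x}$ for all $w \in \mathbf{H}^k_x$ (available since $H^k(0,\ell_i) \hookrightarrow C^{k-1}([0,\ell_i])$ for $k \geq 1$). Invoking the hypothesis, let $\delta, \eta, \beta$ be the associated constants; these are understood as uniform in $T$ since the functional $\overline{\mathcal{L}}$ is built from time-independent weights $\overline{Q}_i$ and the equivalence/decay constants depend only on the $\overline{Q}_i$ and the autonomous coefficients of \eqref{eq:syst_y}. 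Then define the ball size
\[
\varepsilon = \tfrac{1}{2}\min\!\left(\frac{\delta_0}{\eta},\ \frac{\delta}{\eta C_S}\right).
\]

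Next, given $y^0 \in \mathbf{H}^k_x$ with $\|y^0\|_{\mathbf{H}^k_x} \leq \varepsilon$ and satisfying the $(k{-}1)$-order compatibility conditions, Theorem~\ref{thm:well-posedness} yields a unique solution $y \in C^0([0, T_{\max}); \mathbf{H}^k_x)$ with $T_{\max} \in (0, +\infty]$, together with the criterion that $T_{\max} = +\infty$ provided $\|y(\cdot,t)\|_{\mathbf{H}^k_x} \leq \delta_0$ on $[0, T_{\max})$. I would set
\[
T^* = \sup\big\{\, t \in [0, T_{\max}) \ :\ \|y(\cdot,s)\|_{\mathbf{C}^{k-1}_x} \leq \delta \ \text{for all } s \in [0,t]\, \big\}.
\]
Since $\|y^0\|_{\mathbf{C}^{k-1}_x} \leq C_S \varepsilon \leq \delta/(2\eta) < \delta$, continuity of $t \mapsto y(\cdot,t)$ in $\mathbf{H}^k_x$ (and hence in $\mathbf{C}^{k-1}_x$) forces $T^* > 0$.

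For every $T \in (0, T^*)$ the smallness hypothesis $\|y(\cdot,t)\|_{\mathbf{C}^{k-1}_x} \leq \delta$ is met on $[0,T]$, so combining \eqref{eq:lyap_equiv} and \eqref{eq:lyap_decay} gives on $[0,T]$
\[
\|y(\cdot,t)\|_{\mathbf{H}^k_x}^2 \leq \eta\, \overline{\mathcal{L}}(t) \leq \eta\, e^{-2\beta t}\, \overline{\mathcal{L}}(0) \leq \eta^2\, e^{-2\beta t}\, \|y^0\|_{\mathbf{H}^k_x}^2.
\]
Therefore $\|y(\cdot,t)\|_{\mathbf{H}^k_x} \leq \eta e^{-\beta t}\|y^0\|_{\mathbf{H}^k_x} \leq \eta\varepsilon \leq \delta_0/2$ and $\|y(\cdot,t)\|_{\mathbf{C}^{k-1}_x} \leq C_S \eta \varepsilon \leq \delta/2$. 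Passing to the limit $T \nearrow T^*$, these strict bounds extend to $[0, T^*]$. If it were that $T^* < T_{\max}$, the strict inequality $\|y(\cdot, T^*)\|_{\mathbf{C}^{k-1}_x} \leq \delta/2$ together with continuity would let me extend the condition defining $T^*$ past $T^*$, contradicting its definition. Hence $T^* = T_{\max}$, and in particular $\|y(\cdot,t)\|_{\mathbf{H}^k_x} \leq \delta_0/2 < \delta_0$ throughout $[0, T_{\max})$; the continuation criterion of Theorem~\ref{thm:well-posedness} then gives $T_{\max} = +\infty$. The Lyapunov estimate now holds for all $t \geq 0$ with the same constants $\eta, \beta$, which is exactly local $\mathbf{H}^k_x$ exponential stability.

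There is no deep analytic obstacle here; the argument is essentially bookkeeping. The only conceptual point requiring care is the interpretation of the hypothesis: the constants $\delta, \eta, \beta$ must be treated as uniform in $T$ so that exponential decay does not degrade with time. This is automatic for the autonomous Lyapunov construction used in the proof of Theorem~\ref{thm:stabilization}. The choice of $\varepsilon$ must simultaneously respect two smallness thresholds—the well-posedness threshold $\delta_0$ and the Lyapunov validity threshold $\delta$—with the factor $\eta$ absorbed so that the bootstrap closes with strict inequalities; the explicit definition above achieves this.
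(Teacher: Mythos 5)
Your proof is correct and is essentially the standard continuation/bootstrap argument that the paper itself omits, deferring to \cite{BC2016, bastin2017exponential}: the constants are chosen consistently, the blow-up alternative of Theorem \ref{thm:well-posedness} is invoked correctly, and the contradiction at $T^*$ closes the bootstrap. Your observation that $\delta,\eta,\beta$ must be read as uniform in $T$ is the right interpretation of the hypothesis and matches how the proposition is actually used in the proof of Theorem \ref{thm:stabilization}.
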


To find such a Lyapunov functional, we start by observing the energy $\mathcal{E}^\mathcal{P}_i$ ($i\in\mathcal{I}$) for beams described by System \eqref{eq:syst_y}. By definition, it is the sum of the kinetic and elastic energies, and -- for the kind of beam considered here -- it takes the form 
\begin{equation} \label{eq:def_calEP}
\mathcal{E}_i^\mathcal{P}(t) = \int_0^{\ell_i} \langle y_i(x,t) \,, Q_i^\mathcal{P}(x) y_i(x,t) \rangle dx
\end{equation}
for $y_i$ ($i\in \mathcal{I}$) solution to \eqref{eq:syst_y}, and where $Q_i^\mathcal{P} = \mathrm{diag}(\mathbf{M}_i, \mathbf{C}_i)$. 
Moreover, one may easily verify that the product $Q_i^\mathcal{P}A_i$ is skew-symmetric and that $Q^\mathcal{P}_i\overline{B}_i$ is equal to
\begin{equation} \label{eq:prod_QPAi}
Q^\mathcal{P}_i\overline{B}_i = \begin{bmatrix}
\mathds{O}_6 & \mathds{I}_6\\ \mathds{I}_6 & \mathds{O}_6
\end{bmatrix}.
\end{equation}
Here, the notation $\mathcal{P}$ stresses that we are referring to System \eqref{eq:syst_y}, whose unknown state is the \emph{"physical variable"} $y_i$, as opposed to the \emph{"diagonal variable"} $r_i$ for the system written in Riemann invariants.
The velocity feedback controls have been introduced in System \eqref{eq:syst_y} in such a way that the energy $\mathcal{E}^\mathcal{P} = \sum_{i \in \mathcal{I}} \mathcal{E}_i^\mathcal{P}$ of the whole network is dissipated. Indeed, one can check that for any positive semi-definite matrices $K_n$ ($n \in \mathcal{N}$) one has $\frac{\mathrm{d}}{\mathrm{d}t}\mathcal{E}^\mathcal{P}(t) \leq 0$, for all $t \in [0, T]$, if $y$ is solution to \eqref{eq:syst_y} in $[0, T]$.

We start by giving, in Lemma \ref{lem:class_barQi} below, a series of properties on $Q_i$ $(i\in\mathcal{I})$, which are sufficient for the associated functional $\overline{\mathcal{L}}$ to fulfill the assumptions of Proposition \ref{prop:existence_Lyap}.

\begin{lemma} \label{lem:class_barQi}
Assume that there exists $\overline{Q}_i \in C^1([0, \ell_i]; \mathbb{R}^{12 \times 12})$ $(i \in \mathcal{I})$ fulfilling:
\begin{enumerate}[label=(\roman*)]
\item \label{pty:barQi_symm_posDef}
for all $x \in [0, \ell_i]$ and all $i \in \mathcal{I}$, $\bar{Q}_i(x)$ is symmetric and positive definite;
\item \label{pty:barQiA_i_symm}
for all $x \in [0, \ell_i]$ and all $i \in \mathcal{I}$, $(\overline{Q}_iA_i)(x)$ is symmetric;
\item \label{pty:barSi_negDef}
for all $x \in [0, \ell_i]$ and all $i \in \mathcal{I}$, $\overline{S}_i(x)$ is negative definite, \\
where $\bar{S}_i$ is defined by $\overline{S}_i = \frac{\mathrm{d}}{\mathrm{d}x}( \overline{Q}_i A_i ) - \overline{Q}_i\overline{B}_i - \overline{B}_i^\intercal\overline{Q}_i$;
\item \label{pty:barcalR_nonPos}
for all $y \in C^0([0, T]; \mathbf{C}^0_x)$ fulfilling the nodal conditions of \eqref{eq:syst_y} and all $t \in [0, T]$, $\overline{\mathcal{R}}(y,t)\leq 0$, where $\bar{\mathcal{R}}$ is defined by $\overline{\mathcal{R}} = \overline{\mathcal{R}}_0 + \overline{\mathcal{R}}_M + \overline{\mathcal{R}}_S$, with
\begin{linenomath}
\begin{equation*}
\begin{aligned}
&\overline{\mathcal{R}}_0(y,t) = \left\langle y_1 \,, \overline{Q}_1 A_1 y_1 \right\rangle (0, t), \qquad \overline{\mathcal{R}}_S(y,t) = - {\textstyle \sum_{n \in \mathcal{N}_S \setminus \{0\}}} \left\langle y_n \,, \overline{Q}_n A_n y_n\right\rangle (\ell_n, t),\\
&\overline{\mathcal{R}}_M(y,t) = {\textstyle \sum_{n \in \mathcal{N}_M}} \big[- \left\langle y_n \,, \overline{Q}_n A_n y_n \right \rangle (\ell_n, t) + {\textstyle \sum_{i \in \mathcal{I}_n} } \left\langle y_i \,, \overline{Q}_i A_i y_i \right \rangle (0, t)\big].
\end{aligned}
\end{equation*}
\end{linenomath}
\end{enumerate}
Then, the associated $\overline{\mathcal{L}}$ $($see \eqref{eq:def_bar_lyap}$)$ fulfills the assumptions of Proposition \ref{prop:existence_Lyap}.
\end{lemma}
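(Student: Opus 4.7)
The plan is to verify the two conclusions of Proposition~\ref{prop:existence_Lyap} separately: the norm equivalence~\eqref{eq:lyap_equiv} will come essentially from property~(i), while the exponential decay~\eqref{eq:lyap_decay} will follow from a time-differentiation and one integration by parts, with (ii) providing the symmetry needed to integrate by parts, (iii) producing a strictly negative interior term, and (iv) rendering the resulting boundary contribution non-positive.

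For~\eqref{eq:lyap_equiv}, property~(i) together with the continuity of $\overline{Q}_i$ on the compact interval $[0,\ell_i]$ gives uniform upper and lower bounds
\[
\overline{\mathcal{L}}(t)\;\sim\;\sum_{i\in\mathcal{I}}\sum_{\alpha=0}^{k}\bigl\|\partial_t^\alpha y_i(\cdot,t)\bigr\|_{L^2}^2.
\]
To pass from this mixed-derivative norm to $\|y(\cdot,t)\|_{\mathbf{H}^k_x}^2$, I would exploit the invertibility of $A_i(x)$ (all its eigenvalues are non-zero) and rewrite the governing equation as $\partial_x y_i=-A_i^{-1}\bigl(\partial_t y_i+\overline{B}_i y_i-\overline{g}_i(\cdot,y_i)\bigr)$. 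Since $\overline{g}_i$ is quadratic, $\|A_i^{-1}\overline{g}_i(\cdot,y_i)\|_{L^2}\le C\|y_i\|_{C^0_x}\|y_i\|_{L^2}\le C\delta\|y_i\|_{L^2}$, which is absorbed when $\|y\|_{\mathbf{C}^{k-1}_x}\le\delta$ is small. For $k=2$, differentiating the identity once more in $x$ (or once in $t$) controls $\partial_x^2 y_i$ similarly, with the extra nonlinear term again absorbed thanks to smallness in $\mathbf{C}^{k-1}_x$. The reverse bound is immediate from the same identity.

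For~\eqref{eq:lyap_decay}, differentiating $\overline{\mathcal{L}}_{0i}$ in time, substituting the PDE, and invoking the symmetry of $\overline{Q}_iA_i$ from~(ii) to integrate by parts yields
\[
\tfrac{d}{dt}\overline{\mathcal{L}}_{0i}(t)
=-\bigl[\langle y_i,\overline{Q}_iA_iy_i\rangle\bigr]_{x=0}^{x=\ell_i}
+\int_0^{\ell_i}\langle y_i,\overline{S}_iy_i\rangle\,dx
+2\int_0^{\ell_i}\langle \overline{Q}_iy_i,\overline{g}_i(\cdot,y_i)\rangle\,dx.
\]
Summing over $i\in\mathcal{I}$, the boundary terms regroup according to the tree: the $x=0$ contribution of edge $1$ reproduces $\overline{\mathcal{R}}_0$; the single term $-\langle y_n,\overline{Q}_nA_ny_n\rangle(\ell_n)$ at each $n\in\mathcal{N}_S\setminus\{0\}$ reproduces $\overline{\mathcal{R}}_S$; and at each $n\in\mathcal{N}_M$, the term from edge $n$ at $x=\ell_n$ together with the terms from the edges $i\in\mathcal{I}_n$ at $x=0$ reproduces $\overline{\mathcal{R}}_M$. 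By~(iv) this sum is $\le 0$; by~(iii) the middle term is $\le -c\sum_i\|y_i\|_{L^2}^2$; and the cubic remainder is $\le C\|y\|_{\mathbf{C}^0_x}\sum_i\|y_i\|_{L^2}^2$. Choosing $\delta$ small enough and using~\eqref{eq:lyap_equiv} delivers the differential inequality $\sum_i\tfrac{d}{dt}\overline{\mathcal{L}}_{0i}\le -2\beta\sum_i\overline{\mathcal{L}}_{0i}$.

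For the higher-order terms ($\alpha\ge 1$, needed for both $k=1$ and $k=2$), differentiating~\eqref{eq:syst_y} in time shows that $\partial_t y_i$ satisfies an equation of the same form with $\overline{B}_i$ replaced by $\overline{B}_i-D_\mathbf{u}\overline{g}_i(\cdot,y_i)$, and since the nodal conditions of~\eqref{eq:syst_y} are linear and time-independent, $\partial_t y$ again fulfills the same homogeneous nodal conditions, so~(iv) applies to it verbatim. Repeating the previous computation on $\overline{\mathcal{L}}_{\alpha i}$ yields exactly the same boundary regrouping and an interior term equal to $\overline{S}_i$ plus an $O(\|y\|_{\mathbf{C}^0_x})$ perturbation, absorbed by~(iii) when $\delta$ is small. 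The main technical delicacy is precisely this bookkeeping of boundary terms at multiple nodes---it is what turns the abstract non-positivity~(iv) into a usable inequality---together with the check that the quadratic nonlinearity's contribution to $\overline{\mathcal{L}}_{\alpha i}$ is genuinely absorbed, which crucially relies on smallness being measured in $\mathbf{C}^{k-1}_x$ (and not just $\mathbf{H}^k_x$) as in the hypothesis of Proposition~\ref{prop:existence_Lyap}.
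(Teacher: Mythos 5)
Your proposal is correct and follows essentially the same route as the paper's proof: norm equivalence from (i) plus the PDE-based exchange of $\partial_t^\alpha$ and $\partial_x^\alpha$ derivatives, time-differentiation with integration by parts via (ii), the interior term controlled by (iii) after absorbing the quadratic nonlinearity through smallness in $\mathbf{C}^{k-1}_x$, the boundary terms regrouping into $\overline{\mathcal{R}}(\partial_t^\alpha y,t)$ handled by (iv) since $\partial_t^\alpha y$ inherits the nodal conditions, and Gronwall. The only step you leave implicit is the final density argument justifying these manipulations for solutions that are merely $C^0_t\mathbf{H}^k_x$ rather than classically $C^k$, which the paper dispatches by reference to Bastin--Coron.
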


Before the proof of this lemma, let us make an additional remark on the quadratic function $\overline{g}_i$, for $i \in \mathcal{I}$.
We observe that, for any $u \in C^2([0, \ell_i]\times[0, T]; \mathbb{R}^{12})$,
\begin{linenomath}
\begin{align} 
&\partial_t (\overline{g}_i(x, u)) = 2 \overline{\mathbf{G}}_i(x, u)\partial_t u, \quad \partial_t^2 (\overline{g}_i(x, u)) = 2 (\overline{\mathbf{G}}_i(x, u)\partial_t^2 u + \overline{\mathbf{G}}_i(x, \partial_t u)\partial_t u), \nonumber \\
&\partial_x (\overline{g}_i(x,u)) = 2 \overline{\mathbf{G}}_i(x,u)\partial_x u + (\partial_x \overline{\mathbf{G}}_i)(x,u)u \label{eq:deriv_bargi}
\end{align}
\end{linenomath}
where $\overline{\mathbf{G}}_i$ is defined in Remark \ref{rem:form_sources}.
Moreover, there exists a constant $C>0$ depending only on the beam parameters (i.e. $\mathbf{M}_i, \mathbf{C}_i$, for $i \in \mathcal{I}$) such that $\|\overline{\mathbf{G}}_i(x,\mathbf{u})\| \leq C |\mathbf{u}|$ and $\|(\partial_x \overline{\mathbf{G}}_i)(x,\mathbf{u})\| \leq C |\mathbf{u}|$, for any $x \in [0, \ell_i]$ and $\mathbf{u} \in \mathbb{R}^{12}$.

\begin{proof}[Proof of Lemma \ref{lem:class_barQi}]
Similary to Proposition \ref{prop:existence_Lyap}, we use the arguments of \cite{BC2016, bastin2017exponential}, developed for a single hyperbolic system in diagonal form, for our tree-shaped network expressed in physical variable.

Let $k \in \{1, 2\}$. Assume that $y \in C^0([0, T]; \mathbf{H}^k_x)$ is solution to \eqref{eq:syst_y} and that, for some $\delta >0$, $\|y(\cdot, t)\|_{\mathbf{C}^{k-1}_{x}} \leq \delta$ for all $t \in [0, T]$.
Assume temporarily that $y$ is of regularity $C^k$ in $\prod_{i=1}^{N} [0, \ell_i] \times [0, T]$.
Since $\overline{Q}_i(x)$ is positive definite for any $x \in [0, \ell_i]$ and $i \in \mathcal{I}$ (see \ref{pty:barQi_symm_posDef}), there exists $C_1 \geq 1$ (depending only on $Q_i$, $i \in \mathcal{I}$) such that 
\begin{equation}\label{eq:equiv_barLyap_L2timeder}
\frac{1}{C_1} \Big\| \sum_{\alpha=0}^k \partial_t^\alpha y(\cdot, t) \Big\|_{\mathbf{L}^2_x}^2 \leq \overline{\mathcal{L}}(t) \leq C_1 \Big\|\sum_{\alpha=0}^k \partial_t^\alpha y(\cdot, t) \Big\|_{\mathbf{L}^2_x}^2.
\end{equation}
The governing system of \eqref{eq:syst_y} being of first order in space and time, it yields a relationship between $\partial_t y_i$ and $\partial_x y_i$. Indeed, using that $\partial_t y_i = - A_i \partial_x y_i - \overline{B}_i y_i + \overline{g}_i(\cdot, y_i)$ and $\partial_x y_i = A_i^{-1}(-\partial_t y_i - \overline{B}_i y_i + \overline{g}_i(\cdot, y_i))$, and also differentiating these two systems in time and space respectively, one deduces that there exists a constant $C_2\geq 1$ depending only on the beam parameters (i.e. $\mathbf{M}_i$, $\mathbf{C}_i$, for $i\in \mathcal{I}$) and on the $C^0([0, T];\mathbf{C}^0_x)$ norm of $y$ (in particular $C_2$, depends on $\delta$), such that
\begin{linenomath}
\begin{equation*}
\frac{1}{C_2}  \sum_{\alpha=0}^k |\partial_x^\alpha y_i|^2 \leq \sum_{\alpha=0}^k |\partial_t^\alpha y_i|^2 \leq C_2 \sum_{\alpha=0}^k |\partial_x^\alpha y_i|^2
\end{equation*}
\end{linenomath}
in $[0, \ell_i] \times [0, T]$, for all $i \in \mathcal{I}$.
Hence, \eqref{eq:lyap_equiv} is fulfilled with this $y$.
Let $\alpha \in \{1, \ldots k\}$ and $i\in\mathcal{I}$.
One has $\frac{\mathrm{d}}{\mathrm{d}t}\overline{\mathcal{L}}_{\alpha i}(t) = 2 \int_0^{\ell_i} \left\langle \partial_t^\alpha y_i(x, t) \,, \overline{Q}_i(x) \partial_t^{\alpha+1} y_i(x,t)\right\rangle dx$, since $\overline{Q}_i(x)$ is symmetric (see \ref{pty:barQiA_i_symm}).
Below, we drop the arguments $t$ and $x$ for clarity. From the governing system, integration by parts, and \ref{pty:barQiA_i_symm}, one obtains
\begin{equation} \label{eq:dtbarL0i}
\frac{\mathrm{d}}{\mathrm{d}t}\overline{\mathcal{L}}_{\alpha i} =  \int_0^{\ell_i} \Big\langle \partial_t^\alpha y_i \,, \overline{S}_i \partial_t^\alpha y_i + 2\overline{Q}_i \partial_t^\alpha \big(\overline{g}_i(\cdot, y_i)\big) \Big\rangle dx - \Big[\left\langle \partial_t^\alpha y_i \,, \overline{Q}_i A_i \partial_t^\alpha y_i \right\rangle \Big]_0^{\ell_i}.
\end{equation}
Taking into account \eqref{eq:deriv_bargi} and recalling that -- because of the governing system -- one has $|\partial_t y_i| \leq C (|\partial_x y_i|+|y_i|+|y_i|^2)$ for some constant $C>0$ depending only on the beam parameters, we deduce that there exists $C_{3i}>0$, depending only on $\overline{Q}_i$ and the beam parameters, such that 
\begin{equation} \label{eq:dtbarL0i_g}
\sum_{\alpha=0}^k \left\langle \partial_t^\alpha y_i \,, 2 \overline{Q}_i \partial_t^\alpha (\overline{g}_i(x, y_i)) \right\rangle  \leq C_{3i} (\delta + \delta^2) \sum_{\alpha=0}^k |\partial_t^\alpha y_i|^2.
\end{equation}
Let the negative constant $C_{4i}<0$ be the maximum in $[0, \ell_i]$ of the largest eigenvalue of $\overline{S}_i$ (see \ref{pty:barSi_negDef}). By \eqref{eq:dtbarL0i}-\eqref{eq:dtbarL0i_g}, the derivative of $\overline{\mathcal{L}}$ fulfills
\begin{equation} \label{eq:barcalL_ineq}
\frac{\mathrm{d}}{\mathrm{d}t}\overline{\mathcal{L}} \leq \max_{i\in\mathcal{I}}(C_{3i}(\delta + \delta^2) + C_{4i}) \Big\|\sum_{\alpha=0}^k \partial_t^\alpha y \Big\|^2_{\mathbf{L}^2_x} + \sum_{i\in\mathcal{I}} \sum_{\alpha=0}^k \big[\left\langle \partial_t^\alpha y_i \,, \overline{Q}_i A_i \partial_t^\alpha y_i \right\rangle\big]_0^{\ell_i}.
\end{equation}
Hence, choosing $\delta>0$ small enough and using \eqref{eq:equiv_barLyap_L2timeder}, we deduce that there exists a positive constant $\beta>0$ such that the first term in \eqref{eq:barcalL_ineq} is less than or equal to $-2\beta \overline{\mathcal{L}}(t)$. Finally, one can observe that the second term in \eqref{eq:barcalL_ineq} is equal to $\sum_{\alpha=0}^k \overline{\mathcal{R}}(\partial_t^{\alpha} y,t)$, which is nonpositive here (see \ref{pty:barcalR_nonPos}) since $\partial_t^\alpha y$ fulfills the nodal conditions of System \eqref{eq:syst_y} for any $\alpha \in \{ 0, \ldots, k\}$. Gronwall's inequality allows to conclude that \eqref{eq:lyap_decay} holds.
By a density argument similar to \cite[Comment 4.6]{BC2016}, the estimates \eqref{eq:lyap_equiv}-\eqref{eq:lyap_decay} remain valid for $y \in C^0([0, T]; \mathbf{H}^k_x)$.
\end{proof}

\subsubsection{Lemmas and Proof of Theorem \ref{thm:stabilization}}

Having clarified which kind of functions $\overline{Q}_i$ (for $i \in \mathcal{I}$) we are looking for, we may now proceed with the main part of the proof of Theorem \ref{thm:stabilization}, which is to show the existence of such functions. Let us first give a short lemma of use in what follows.

\begin{lemma} \label{lem:exist_g}
Let $\eta>0$, $\ell>0$ be fixed. 
\begin{enumerate}[label=\alph*)]
\item \label{item:exist_g_neg}
For any choice of constants $a < b\leq 0$, there exists $q\in C^\infty([0, \ell])$ such that $q(0) = a$, $q(\ell) = b$ and $\frac{\mathrm{d}}{\mathrm{d}x}q(x) > (q(\ell)-q(x)) \eta$, for all $x \in [0, \ell]$.
\item \label{item:exist_g_pos}
For any choice of constants $0 \leq a < b$, there exists $q\in C^\infty([0, \ell])$ such that $q(0) = a$, $q(\ell) = b$ and $\frac{\mathrm{d}}{\mathrm{d}x}q(x) > (q(x) - q(0))\eta$, for all $x \in [0, \ell]$.
\end{enumerate}
\end{lemma}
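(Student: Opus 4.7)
My plan is to construct $q$ in both cases as the explicit solution of an appropriate constant-coefficient linear first-order ODE with a small positive constant forcing term, chosen so that the two boundary values $q(0)=a$ and $q(\ell)=b$ are simultaneously attained. The key observation is that if $q$ satisfies an equation of the form $q'(x) = F(x,q(x)) + \epsilon$ with $\epsilon > 0$, where $F(x,q)$ is exactly the right-hand side of the desired strict inequality, then that strict inequality is automatically enforced on $[0,\ell]$; the only remaining freedom is to calibrate the size of $\epsilon$ so that the right endpoint condition $q(\ell)=b$ also holds.

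For part \ref{item:exist_g_neg}, I would let $q$ be the solution of
\begin{equation*}
q'(x) + \eta q(x) = \eta b + \epsilon, \qquad q(0) = a,
\end{equation*}
which is explicitly $q(x) = b + \epsilon/\eta + (a - b - \epsilon/\eta)e^{-\eta x}$. Imposing $q(\ell) = b$ uniquely determines $\epsilon = \eta(b-a)/(e^{\eta \ell} - 1)$, which is strictly positive since $b > a$. For part \ref{item:exist_g_pos}, the analogous ansatz is
\begin{equation*}
q'(x) - \eta q(x) = -\eta a + \epsilon, \qquad q(0) = a,
\end{equation*}
with explicit solution $q(x) = a + (\epsilon/\eta)(e^{\eta x} - 1)$; the very same choice $\epsilon = \eta(b-a)/(e^{\eta \ell} - 1) > 0$ yields $q(\ell) = b$. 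In either case the resulting $q$ is real-analytic on $[0,\ell]$, hence certainly $C^\infty$, and the strict inequality stated in the lemma follows at once from the ODE since $\epsilon > 0$.

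The sign hypotheses $b \leq 0$ in (a) and $a \geq 0$ in (b) play no role in the argument itself; they are only relevant for the later application of the lemma in the construction of the weight matrices $\overline{Q}_i$. Since the whole proof reduces to solving two elementary scalar linear ODEs in closed form, there is no real analytic obstacle. The only conceptual step, which I regard as the main (and still very mild) point, is to recognise that the correct ansatz is the linear ODE whose homogeneous part encodes precisely the right-hand side of the desired strict inequality, so that shifting it by any positive constant $\epsilon$ turns equality into the required strict inequality, and then to treat $\epsilon$ as a one-dimensional shooting parameter to be tuned to match the right endpoint.
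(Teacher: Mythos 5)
Your proof is correct and follows essentially the same strategy as the paper's: turn the strict differential inequality into an explicitly solvable linear first-order ODE with a positive slack parameter, then tune that parameter to hit $q(\ell)=b$. The only (immaterial) difference is that you place a constant slack $\epsilon$ in the ODE itself, yielding $q$ of the form $\mathrm{const}+\mathrm{const}\cdot e^{\mp\eta x}$, whereas the paper places the constant slack after multiplying by the integrating factor, yielding $q(x)=e^{-\eta x}(a-b+\varepsilon x)+b$ with $\varepsilon=(b-a)/\ell$; both choices verify the same inequality with a strictly positive margin.
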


\begin{proof}[Proof of Lemma \ref{lem:exist_g}]
Starting with \ref{item:exist_g_neg}, $\frac{\mathrm{d}}{\mathrm{d}x}q(x) > \eta (b-q(x))$ is equivalent, for some $\varepsilon>0$, to $\frac{\mathrm{d}}{\mathrm{d}x} \left( e^{\eta x} (q(x)-b) \right) \geq \varepsilon$. Integrating the latter on $[0, x]$, it is equivalent to
\begin{equation} \label{eq:ineq_exist_g_neg}
e^{\eta x} (q(x)-b) - (a-b) \geq \varepsilon x.
\end{equation}
Hence, choosing $q$ defined by $q(x) = e^{-\eta x}(a-b+\varepsilon x) + b$ with $\varepsilon = \ell^{-1}(b - a)$, the function $q$ satisfies both \eqref{eq:ineq_exist_g_neg} (with equality) and $q(0)=a$, $q(\ell) = b$.
The proof of \ref{item:exist_g_pos} is similar: observing that $\frac{\mathrm{d}}{\mathrm{d}x}q(x) > \eta (q(x) - a)$ is equivalent to $\frac{\mathrm{d}}{\mathrm{d}x} \left( e^{-\eta x} (q(x)-a) \right) \geq \varepsilon$ for some $\varepsilon>0$, we integrate the latter inequality on $[0, x]$ and choose $q(x) = a + e^{\eta x}\varepsilon x$ with $\varepsilon = e^{-\eta\ell}\ell^{-1}(b - a)$.
\end{proof}

We may now give the proof of Theorem \ref{thm:stabilization}.

\begin{proof}[Proof of Theorem \ref{thm:stabilization}]
We are looking for functions $\overline{Q}_i$ ($i \in \mathcal{I}$) fulfilling the properties \ref{pty:barQi_symm_posDef}-\ref{pty:barQiA_i_symm}-\ref{pty:barSi_negDef}-\ref{pty:barcalR_nonPos} of Lemma \ref{lem:class_barQi}. Let $i \in \mathcal{I}$.

\subsubsection*{Step 1: Ansatz}
First, we choose an Ansatz for $\overline{Q}_i$.
Our choice rests on the form of the energy of the beam $\mathcal{E}_i^\mathcal{P}$, whose definition (see \eqref{eq:def_calEP}) depends on the \emph{"energy matrix"} $Q^\mathcal{P}_i$: we multiply $Q^\mathcal{P}_i$ by a constant $\rho_i$, and add extradiagonal terms $\mathbf{W}_i$, multiplied by a "\emph{weight function}" $w_i$. More precisely, we look for $\overline{Q}_i$ of the form
\begin{equation} \label{eq:ansatz_Qibar}
\overline{Q}_i = \rho_i Q_i^\mathcal{P} + Q_i^\mathrm{ex}, \qquad \text{with} \ \ Q_i^\mathrm{ex} = w_i \begin{bmatrix}
\mathds{O}_6 & \mathbf{W}_i \\ \mathbf{W}_i^\intercal & \mathds{O}_6
\end{bmatrix},
\end{equation}
for some $\rho_i \in \mathbb{R}$, $w_i \in C^1([0, \ell_i])$ and $\mathbf{W}_i \in C^1([0, \ell_i]; \mathbb{R}^{6 \times 6})$.
Then, $Q_i(x)$ is by definition symmetric, for all $x \in [0, \ell_i]$.
Since the product $Q_i^\mathcal{P} A_i$ takes the form \eqref{eq:prod_QPAi}, one has $\tfrac{\mathrm{d}}{\mathrm{d}x} \left(\overline{Q}_i A_i \right) = \tfrac{\mathrm{d}}{\mathrm{d}x} \left(Q_i^\mathrm{ex} A_i \right)$; and since $Q_i^\mathcal{P}\overline{B}_i$ is skew-symmetric, one has $\overline{Q}_i\overline{B}_i + (\overline{Q}_i\overline{B}_i)^\intercal = Q_i^\mathrm{ex}\overline{B}_i + (Q_i^\mathrm{ex}\overline{B}_i)^\intercal$. Hence, we obtain
\begin{linenomath}
\begin{equation*}
\begin{aligned}
\tfrac{\mathrm{d}}{\mathrm{d}x} \left(\overline{Q}_i A_i \right)
= \tfrac{\mathrm{d}}{\mathrm{d}x} \big( - w_i &\mathrm{diag}\left(\mathbf{W}_i\mathbf{C}_i^{-1}, \mathbf{W}_i^\intercal \mathbf{M}_i^{-1} \right) \big),\\
\overline{Q}_i\overline{B}_i + (\overline{Q}_i\overline{B}_i)^\intercal 
= w_i \mathrm{diag}\big( \mathbf{W}_i \mathbf{C}_i^{-1}\mathbf{E}_i^\intercal &+ \mathbf{E}_i \mathbf{C}_i^{-1} \mathbf{W}_i^\intercal, - \mathbf{W}_i^\intercal \mathbf{M}_i^{-1} \mathbf{E}_i - \mathbf{E}_i^\intercal \mathbf{M}_i^{-1} \mathbf{W}_i \big).
\end{aligned}
\end{equation*}
\end{linenomath}
Consequently, $\overline{S}_i = - \frac{\mathrm{d}w_i}{\mathrm{d}x} \Lambda_i + |w_i| \Xi_i$, where $\Lambda_i$ is defined by $\Lambda_i = \mathrm{diag}\left( \Lambda_i^\mathrm{I}\,, \Lambda_i^\mathrm{II} \right)$, with $\Lambda_i^\mathrm{I} = \mathbf{W}_i \mathbf{C}_i^{-1}$ and  $\Lambda_i^\mathrm{II} = \mathbf{W}_i^\intercal \mathbf{M}_i^{-1}$,
and where $\Xi_i$ is defined by
\begin{linenomath}
\begin{equation*}
\Xi_i = \mathrm{sign}(w_i) \mathrm{diag} \left(\Lambda_i^\mathrm{I}\mathbf{E}_i^\intercal + (\Lambda_i^\mathrm{I}\mathbf{E}_i^\intercal)^\intercal - \tfrac{\mathrm{d}}{\mathrm{d}x} \Lambda_i^\mathrm{I} \,, - \Lambda_i^\mathrm{II} \mathbf{E}_i - (\Lambda_i^\mathrm{II} \mathbf{E}_i)^\intercal  - \tfrac{\mathrm{d}}{\mathrm{d}x}\Lambda_i^\mathrm{II} \right).
\end{equation*}
\end{linenomath}
We have written $\overline{S}_i(x)$ as the sum of two matrices $ - (\frac{\mathrm{d}w_i}{\mathrm{d}x} \Lambda_i)(x) $ and $(|w_i| \Xi_i)(x)$, and the latter matrix may be indefinite because of the presence of $\mathbf{E}_i$ (see \eqref{eq:def_E_bold}) in its expression. This is why we will, latter on, make the first matrix negative definite -- by adding assumptions on $w_i$ and $\mathbf{W}_i$ -- and large enough in comparison to the second matrix, for \ref{pty:barSi_negDef} to hold.

\subsubsection*{Step 2: Constraints on $\mathbf{W}_i$}
In view of \ref{pty:barQiA_i_symm}-\ref{pty:barSi_negDef} and the above observations, we look for a function $\mathbf{W}_i$ satisfying 
\begin{equation} \label{eq:cond_Wi}
\Lambda_i^\mathrm{I}(x) \in \mathbb{S}^6_{++}, \qquad \Lambda_i^\mathrm{II}(x)\in \mathbb{S}^6_{++}, \qquad \text{for all }x \in [0, \ell_i],
\end{equation}
so that not only \ref{pty:barQiA_i_symm} is fulfilled, but also $(\overline{\Lambda}_i)(x)$ is symmetric and positive definite.
One may verify that possible $\mathbf{W}_i$ fulfilling \eqref{eq:cond_Wi} are
\begin{linenomath}
\begin{align}
\mathbf{W}_i &= \mathds{I}_6, \label{eq:boldWi_I}\\
\mathbf{W}_i &= \mathbf{M}_i \mathbf{C}_i, \label{eq:boldWi_MC}\\
\label{eq:boldWi_MCfrac}
\mathbf{W}_i &= \mathbf{C}_i^{-\sfrac{1}{2}}(\mathbf{C}_i^{\sfrac{1}{2}}\mathbf{M}_i\mathbf{C}_i^{\sfrac{1}{2}})^{\sfrac{1}{2}} \mathbf{C}_i^{\sfrac{1}{2}},
\end{align}
\end{linenomath}
where the latter choice \eqref{eq:boldWi_MCfrac} also writes as $\mathbf{W}_i = \mathbf{M}_i^{\sfrac{1}{2}}\mathbf{C}_i^{\sfrac{1}{2}}$ for commuting $\mathbf{M}_i, \mathbf{C}_i$.

\subsubsection*{Step 3: Constraints on $w_i$}
To render $-(\frac{\mathrm{d}w_i}{\mathrm{d}x} \Lambda_i)(x)$ negative definite for all $x \in [0, \ell_i]$, we require $w_i$ to be increasing. Then, a sufficient conditions for \ref{pty:barSi_negDef} to hold is that 
\begin{equation} \label{eq:cond_barSi_neg}
\frac{\mathrm{d}w_i}{\mathrm{d}x}> C_{\Xi_i} C_{\Lambda_i}^{-1} |w_i|,
\end{equation}
where $C_{\Lambda_i} >0$ and $C_{\Xi_i} \geq 0$ are the maximum over $[0, \ell_i]$ of the smallest eigenvalue of $\Lambda_i(x)$ and largest eigenvalue of $\Xi_i(x)$, respectively.
This follows from Weyl's Theorem  \cite[Th. 4.3.1, Coro. 4.3.15]{horn2012matrix} which provides bounds on the eigenvalues of the sum of Hermitian matrices.
Moreover, \ref{pty:barQi_symm_posDef} is equivalent to assuming that $\rho_i >0$ and the Schur complement $(\overline{Q}_i / (\rho_i \mathbf{C}_i))(x) = (\rho_i \mathbf{M}_i - w_i^2 \rho_i^{-1} \mathbf{W}_i \mathbf{C}_i^{-1} \mathbf{W}_i^\intercal)(x)$ is positive definite for all $x \in [0, \ell_i]$;
the latter being equivalent to the inequality 
\begin{linenomath} 
\begin{equation*}
\rho_i > |w_i(x)| \sqrt{\lambda_{\theta_i(x)}}, \qquad \text{for all } x \in [0, \ell_i],
\end{equation*}
\end{linenomath}
where $\lambda_{\theta_i(x)}$ denotes the largest eigenvalue of the matrix $\theta_i(x)$ defined by $\theta_i = \mathbf{M}_i^{-\sfrac{1}{2}} \mathbf{W}_i \mathbf{C}_i^{-1} \mathbf{W}_i^\intercal \mathbf{M}_i^{-\sfrac{1}{2}}$.
Hence, denoting $C_{\theta_i} = \max_{x \in [0, \ell_i]} \lambda_{\theta_i(x)}$, a sufficient condition for \ref{pty:barQi_symm_posDef} to be satisfied is
\begin{equation} \label{eq:cond_barQi_pos}
|w_i(x)| < \rho_i \sqrt{ C_{\theta_i}}, \qquad \text{for all } x \in [0 ,\ell_i]
\end{equation}
Note that if $\mathbf{W}_i$ is defined by \eqref{eq:boldWi_MCfrac}, then $\theta_i(x) = \mathds{I}_6$ and $\lambda_{\theta_i}(x) = 1$ for all $x \in [0, \ell_i]$.

\subsubsection*{Step 4.1: Boundary terms (case of tree-shaped networks)}
In the preceding steps, we saw that the extradiagonal terms $Q_i^\mathrm{ex}(x)$ are of help to make the matrix $\overline{S}_i(x)$ negative definite under some assumptions. However, since they are also present in the boundary terms stored in $\overline{\mathcal{R}}$, the choice of $w_i$ and $\mathbf{W}_i$ is further constrained. 
We start by studying $\overline{\mathcal{R}}$ for a general tree-shaped network, and will afterwards -- in Step 4.2 -- focus on the star-shaped case. 
For any $i \in \mathcal{I}$, one has $\langle y_i \,, \overline{Q}_i A_i y_i \rangle = -2\rho_i \langle v_i \,, z_i \rangle - w_i \langle v_i\,, \Lambda_i^\mathrm{I} v_i \rangle - w_i \langle z_i \,, \Lambda_i^\mathrm{II} z_i\rangle $. Hence,
\begin{linenomath}
\begin{equation*}
\begin{aligned}
\overline{\mathcal{R}}_0(y,t) &= \left(-2\rho_1 \left \langle v_1 \,, z_1 \right\rangle - \left \langle v_1 \,, w_1 \Lambda_i^\mathrm{I} v_1 \right \rangle -  \left \langle z_1 \,, w_1 \Lambda_i^\mathrm{II} z_1 \right \rangle \right) (0, t),\\
\overline{\mathcal{R}}_M(y,t) &= { \textstyle \sum_{n \in \mathcal{N}_M}} \big[ \left(2\rho_n \left \langle v_n \,, z_n\right \rangle + \left \langle v_n \,, w_n \Lambda_n^\mathrm{I} v_n \right \rangle + \left \langle z_n \,, w_n \Lambda_n^\mathrm{II} z_n\right \rangle \right) (\ell_n, t)\\
&\quad - {\textstyle \sum_{i \in \mathcal{I}_n} } \left( 2\rho_i \left \langle v_i \,, z_i \right \rangle + \left \langle v_i \,, w_i \Lambda_i^\mathrm{I} v_i\right \rangle + \left \langle z_i \,, w_i \Lambda_i^\mathrm{II} z_i \right \rangle \right)(0, t)  \big],\\
\overline{\mathcal{R}}_S(y,t) &= { \textstyle \sum_{n \in \mathcal{N}_S \setminus \{0\}}} \left( 2\rho_n \left \langle v_n \,, z_n \right \rangle + \left \langle v_n \,, w_n \Lambda_n^\mathrm{I} v_n\right \rangle  + \left \langle z_n\,, w_n \Lambda_n^\mathrm{II} z_n\right \rangle \right) (\ell_n, t).
\end{aligned}
\end{equation*}
\end{linenomath}
Let us first focus on multiple nodes $n \in \mathcal{N}_M$.  One can use the transmission conditions, to rewrite the term $2\rho_n  \langle v_n \,, z_n \rangle (\ell_n, t) - 2 \sum_{i \in \mathcal{I}_n} \rho_i \langle v_i \,, z_i \rangle (0, t)$, if the constant $\rho_i$ is the same for all incident edges $i \in \mathcal{I}_n \cup \{n\}$.
This is why we henceforth assume that $\rho_i = \rho >0$ for all $i \in \mathcal{I}$.
Indeed, the continuity condition and the fact that $\overline{R}_i$ is unitary, for all $i \in \mathcal{I}$ and $x \in [0, \ell_i]$, yield that
\begin{linenomath}
\begin{equation*}
 \langle v_n \,, z_n \rangle (\ell_n, t) - {\displaystyle \sum_{i \in \mathcal{I}_n} } \langle v_i \,, z_i \rangle (0, t) = \Big\langle (\overline{R}_n v_n)(\ell_n, t) \,,  (\overline{R}_n z_n)(\ell_n, t) - {\displaystyle\sum_{i \in \mathcal{I}_n} } (\overline{R}_i z_i)(0, t) \Big\rangle
\end{equation*}
\end{linenomath}
while the Kirchhoff condition yields that the above right-hand side is equal to $- \left\langle v_n \,, \overline{K}_n v_n \right\rangle (\ell_n, t)$, where $\bar{K}_n$ is defined by \eqref{eq:def_barKn}.
Hence, in $\overline{\mathcal{R}}_M$, we have replaced quantities of unknown sign by a nonpositive term:
\begin{equation}
\label{eq:barcalR_M}
\begin{aligned}
\overline{\mathcal{R}}_M&(y,t) = {\textstyle \sum_{n \in \mathcal{N}_M}} \big[ \left(\left \langle v_n \,, w_n \Lambda_n^\mathrm{I} v_n\right \rangle  + \left \langle z_n \,, w_n \Lambda_n^\mathrm{II} z_n \right \rangle \right) (\ell_n, t)  \\
&-2 \rho \left \langle v_n \,, \overline{K}_n v_n \right \rangle (\ell_n, t) - {\textstyle \sum_{i \in \mathcal{I}_n}}  \left( \left \langle v_i \,, w_i \Lambda_i^\mathrm{I} v_i\right \rangle + \left \langle z_i \,, w_i \Lambda_i^\mathrm{II} z_i\right \rangle \right) (0, t) \big].
\end{aligned}
\end{equation}
Let us now consider the node $n=0$. If this node is clamped then
\begin{equation} \label{eq:barcalR_0_clamped}
\overline{\mathcal{R}}_0(y,t) = - \left \langle z_1 \,, w_1 \Lambda_1^\mathrm{II} z_1\right \rangle (0, t).
\end{equation}
If a control is applied, $\overline{\mathcal{R}}_0(y,t) = \left \langle v_1 \,, (-2\rho K_0 - w_1 \Lambda_1^\mathrm{I} - w_1 K_0 \Lambda_1^\mathrm{II} K_0) v_1 \right\rangle (0, t)$ due to the nodal condition.
Finally, for remaining simple nodes $n \in \mathcal{N}_S\setminus \{0\}$, the nodal conditions yield that
\begin{equation} \label{eq:barcalR_S_not0}
\overline{\mathcal{R}}_S(y,t) = {\textstyle \sum_{n \in \mathcal{N}_S \setminus \{0\}} } \left \langle v_n \,, (- 2 \rho K_n + w_n \Lambda_n^\mathrm{I} + w_n K_n \Lambda_n^\mathrm{II} K_n) v_n\right \rangle (\ell_n, t).
\end{equation}

\subsubsection*{Step 4.2: Boundary terms (case of star-shaped networks)}
We now focus on the specific network considered inTheorem \ref{thm:stabilization}. As a star-shaped network, it is such that $\mathcal{N}_M = \{1\}$ and $\mathcal{I}_1 = \{2, 3, \ldots, N\}$. Furthermore, we have assumed that $K_1= \mathds{O}_6$ and $K_n \in \mathbb{S}_{++}^6$ for all $n \in \mathcal{N}_S$. Hence, the boundary terms take the form
\begin{linenomath}
\begin{equation*}
\begin{aligned}
\overline{\mathcal{R}}(y,t) &= 
\left \langle v_1 \,, (-2\rho K_0 - w_1 \Lambda_1^\mathrm{I} -w_1 K_0 \Lambda_1^\mathrm{II} K_0) v_1 \right \rangle (0, t) + \left \langle v_1 \,, w_1 \Lambda_1^\mathrm{I} v_1\right \rangle (\ell_1, t) \\
&\quad   + \left \langle z_1 \,, w_1 \Lambda_1^\mathrm{II} z_1 \right \rangle (\ell_1, t) + \sum_{i=2}^N \Big[ \left \langle v_i \,, (- 2 \rho K_i + w_i \Lambda_i^\mathrm{I} + w_i K_i \Lambda_i^\mathrm{II} K_i ) v_i\right \rangle (\ell_i, t)\\
&\quad - \left \langle v_i \,, w_i \Lambda_i^\mathrm{I} v_i \right \rangle (0, t) - \left \langle z_i \,, w_i \Lambda_i^\mathrm{II} z_i \right \rangle (0, t) \Big].
\end{aligned}
\end{equation*}
\end{linenomath}
Since $\Lambda_i^\mathrm{I}$ and $\Lambda_i^\mathrm{II}$ ($i\in\mathcal{I}$) have values in $\mathbb{S}^6_{++}$, we assume that
\begin{equation} \label{eq:assup_weights_sign}
w_1(\ell_1) \leq 0, \qquad w_i(0) \geq 0, \quad \text{for all }i\geq 2,
\end{equation}
in order to render nonpositve the scalar products containing neither $-2\rho K_0$ nor $-2\rho K_i$ ($i \geq 2$). Then, defining for all $x\in [0, \ell_i], \ i \in \mathcal{I}$ and $K \in \mathbb{S}^6_{++}$, the matrix
\begin{linenomath}
\begin{equation*}
\mu_i(x,K) = (-2\rho K  + |w_i| \Lambda_i^\mathrm{I} + |w_i| K \Lambda_i^\mathrm{II} K)(x),
\end{equation*}
\end{linenomath}
we obtain $\overline{\mathcal{R}}(y,t) \leq \left \langle v_1 \,, \mu_1(0, K_0) v_1 \right \rangle (0, t) + \sum_{i=2}^N \left \langle v_i \,, \mu_i(\ell_i, K_i) v_i \right \rangle (\ell_i, t)$.

No additional assumption on the sign of $w_1(0)$ and $w_i(\ell_i)$ for $i\geq 2$ is needed to estimate the remaining boundary terms. This comes from the fact that, for any fixed $x \in [0, \ell_i]$ and $K \in \mathbb{S}^6_{++}$, the matrix $\mu_i(x,K)$ is negative semi-definite if $\rho$ is large enough in comparison to $|w_i(x)|$. Indeed, for this matrix to be negative semi-definite, it is sufficient to have $|w_i(x)| \leq \rho C_{\mu_i(x,K)}^{-1}$, where $C_{\mu_i(x,K)}$ denotes the largest eigenvalue of $(K^{-\sfrac{1}{2}} \Lambda_i^\mathrm{I} K^{-\sfrac{1}{2}} + K^{\sfrac{1}{2}} \Lambda_i^\mathrm{II} K^{\sfrac{1}{2}})(x)$. Hence, $\overline{\mathcal{R}}(y,t) \leq 0$ if 
\begin{equation}
\label{eq:cond_boundtermfb_neg}
|w_1(0)| \leq \frac{\rho}{ C_{\mu_1(0,K_0)}}, \qquad |w_i(\ell_i)| \leq \frac{\rho }{ C_{\mu_i(\ell_i,K_i)}}, \quad \text{for all } i \in \{2, \ldots, N\}.
\end{equation}

\subsubsection*{Step 5: Existence of $\overline{Q}_i$}
In summary, our first aim is to find $\mathbf{W}_i \in C^1([0, \ell_i]; \mathbb{R}^{6 \times 6})$ ($i \in \mathcal{I}$) fulfilling \eqref{eq:cond_Wi} -- examples of such functions being \eqref{eq:boldWi_I}, \eqref{eq:boldWi_MC} and \eqref{eq:boldWi_MCfrac}. Secondly, once that $\mathbf{W}_i$ ($i\in \mathcal{I}$) have been chosen, our aim is to find $\rho>0$ and increasing functions $w_i \in C^1([0, \ell_i])$ $(i \in \mathcal{I})$ such that $w_1$ is nonpositive while $w_i$ is nonnegative for all $i \geq 2$, and which fulfill \eqref{eq:cond_barSi_neg} as well as
\begin{equation} \label{eq:rho_large_enough}
\begin{aligned}
&w_1(0) > - \rho \beta_1, \quad \text{with }\beta_1= \min \left\{C_{\theta_1}^{-\sfrac{1}{2}}, C_{\mu_1(0,K_0)}^{-1}\right\},\\
& w_i(\ell_i) < \rho \beta_i, \quad \ \ \, \text{with } \beta_i = \min \left\{C_{\theta_i}^{-\sfrac{1}{2}}, C_{\mu_i(\ell_i,K_i)}^{-1}\right\}, \quad \text{for all }i \in \{2, \ldots, N\}.
\end{aligned}
\end{equation}
Here, \eqref{eq:rho_large_enough} is equivalent to \eqref{eq:cond_barQi_pos} and \eqref{eq:cond_boundtermfb_neg} (with strict inequalities), due to the monoticity and sign assumptions made on the weight functions.

With the help of Lemma \ref{lem:exist_g} \ref{item:exist_g_neg} and \ref{item:exist_g_pos}, we can conclude that there exists such weight functions, example of which are illustrated in Fig. \ref{fig:star_weights}. Indeed, for any $\rho>0$, one may choose $w_i = q_i - q_i(\ell_i)$, where $q_1$ is a function provided by \ref{item:exist_g_neg} with $\eta = C_{\overline{\Theta}_1} C_{\overline{\Lambda}_1}^{-1}$, with $\ell = \ell_1$ and with $a,b$ such that $(b- \rho \beta_1) < a < b \leq 0$, while $q_i$ ($i\geq 2$) is a function provided by \ref{item:exist_g_pos} with $\eta = C_{\overline{\Theta}_i} C_{\overline{\Lambda}_i}^{-1}$, with $\ell = \ell_i$ and with $a,b$ such that $0\leq a<b<(a+ \rho \beta_i)$.
\end{proof}

\begin{figure}
\centering
\includegraphics[scale=0.85]{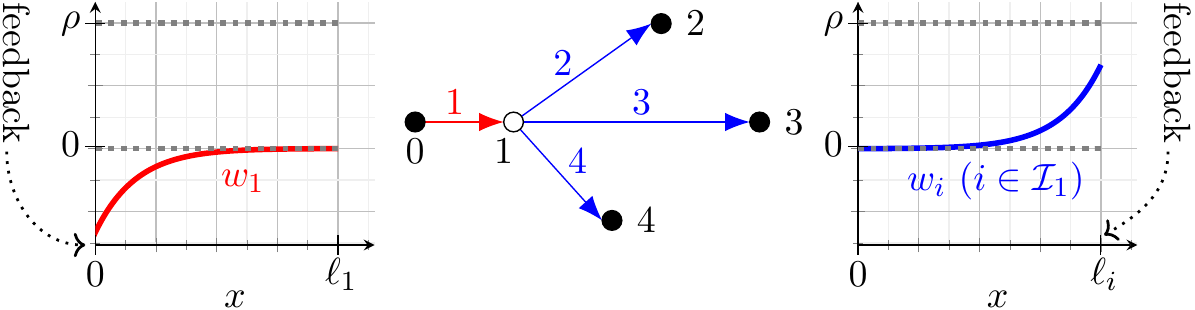}
\caption{Example of choice of $\rho$ and the weight functions.}
\label{fig:star_weights}
\end{figure}

\begin{remark}  \label{rem:remove_a_fb}
In Step 4.2 of the proof of Theorem \ref{thm:stabilization}, we could assume that \eqref{eq:assup_weights_sign} holds without disagreeing with the fact that each $w_i$ ($i \in \mathcal{I}$) has to be increasing, and this assumption enabled us to estimate boundary terms at the multiple node $n=1$.
However, out of the setting of star-shaped networks controlled at all simple nodes, it is not clear how one may obtain the property \ref{pty:barcalR_nonPos} of Lemma \ref{lem:class_barQi} without contradicting the monoticity assumption on $w_i$ ($i \in \mathcal{I}$).

An ensuing natural question is: what would happen if one of the controls is removed?
More precisely, we may assume that the beam of index $i=1$ is clamped or free at the node $n=0$, and we may possibly apply a feedback at the multiple node -- meaning that $K_1 \in \mathbb{S}^6_{++}$. Then, the steps 1 to 3 of the proof of Theorem \ref{thm:stabilization} remain unchanged since they concern the governing system, but the boundary terms stored in $\overline{\mathcal{R}}(y,t)$ are different. One has, from \eqref{eq:barcalR_M}-\eqref{eq:barcalR_0_clamped}-\eqref{eq:barcalR_S_not0}, that
\begin{linenomath}
\begin{equation*}
\begin{aligned}
\overline{\mathcal{R}}&(y,t) = 
- \left \langle z_1 \,, w_1 \Lambda_1^\mathrm{II} z_1\right \rangle (0, t) + \left \langle v_1 \,, (- 2\rho\overline{K}_1 + w_1 \Lambda_1^\mathrm{I}) v_1\right \rangle (\ell_1, t)  \\
&+ \left \langle z_1 \,, w_1 \Lambda_1^\mathrm{II} z_1 \right \rangle (\ell_1, t) + { \textstyle \sum_{i=2}^N} \big[ \left \langle v_i \,, (- 2 \rho K_i + w_i \Lambda_i^\mathrm{I} + w_i K_i \Lambda_i^\mathrm{II} K_i ) v_i\right \rangle (\ell_i, t) \\
&- \left \langle v_i \,, w_i \Lambda_i^\mathrm{I} v_i \right \rangle (0, t) + \left \langle z_i \,, w_i \Lambda_i^\mathrm{II} z_i \right \rangle (0, t) \big],
\end{aligned}
\end{equation*}
\end{linenomath}
Here, one cannot both assume that $w_1(0) \geq 0$ -- in order to estimate the first term in the above expression -- and that $w_1(\ell_1) \leq 0$ -- in order to estimate the term $\left \langle z_1 \,, w_1 \Lambda_1^\mathrm{II} z_1 \right \rangle (\ell_1, t)$ -- without contradicting the fact that $w_1$ should be increasing.
\end{remark}

\subsection{Point of view of the diagonal system}
\label{subsec:stab_D}

Now that Theorem \ref{thm:stabilization} has been proved from the point of view of \eqref{eq:syst_y}, let us consider the point of view of the diagonal system \eqref{eq:syst_Riem_1}. Even though computations for the latter are more involved, we are interested in observing how the proof unfolds and understanding if -- with the same Lyapunov functional -- the boundary terms may be treated in a manner that also allows for the removal of one of the controls, as mentioned in Remark \ref{rem:remove_a_fb}.

The physical and diagonal systems are related by the change of variable \eqref{eq:change_var_Li}. Hence, the energy $\mathcal{E}_i^\mathcal{D}$ of a beam (of index $i \in \mathcal{I}$) described by \eqref{eq:syst_Riem_1}, takes the form  
\begin{equation} \label{eq:def_energy_i_D}
\mathcal{E}_i^\mathcal{D}(t) = \int_0^{\ell_i} \langle
r_i(x,t) \,, Q_i^\mathcal{D}(x) r_i(x,t) \rangle dx,
\end{equation}
for $r_i$ solution to \eqref{eq:syst_Riem_1}, and where $Q_i^\mathcal{D} = (L_i^{-1})^\intercal Q_i^\mathcal{P} L_i^{-1}$; and one may compute that $Q_i^\mathcal{D} = \frac{1}{2}\mathrm{diag}\left(D_i^{-2}, D_i^{-2} \right)$.
Just as $\mathcal{P}$ refers to the physical system, here the subscript $\mathcal{D}$ refers to the diagonal system.
Let $k \in \{1, 2\}$, and define $\mathcal{L}\colon [0, T] \rightarrow [0, +\infty)$ by
\begin{equation} \label{eq:def_bar_lyap_Riem}
\mathcal{L}(t) = \sum_{i \in \mathcal{I}} \sum_{\alpha=0}^k \mathcal{L}_{\alpha i}(t), \quad \text{with} \ \ \mathcal{L}_{\alpha i}(t) := \int_0^{\ell_i} \left\langle \partial_t^\alpha r_i (x,t) \,, Q_i(x) \partial_t^\alpha r_i (x,t) \right\rangle dx,
\end{equation}
where $Q_i \in C^1([0, \ell_i]; \mathbb{R}^{12 \times 12})$ for all $i \in \mathcal{I}$, and $r \in C^0([0, T]; \mathbf{H}_x^k)$ is solution to \eqref{eq:syst_Riem_1}. 
As for the energy, if $Q_i$ and $\overline{Q}_i$ are such that
\begin{equation} \label{eq:rel_Q_barQ}
Q_i = (L_i^{-1})^\intercal \overline{Q}_i L_i^{-1}, \qquad \text{for all }i \in\mathcal{I},
\end{equation}
then $\mathcal{L}$ and $\overline{\mathcal{L}}$ are equivalent expressions -- the former from the point of view of the diagonal system and the latter from that of the physical system.

We can show, equivalently, that the zero steady state of \eqref{eq:syst_y} or that the zero steady state of \eqref{eq:syst_Riem_1} is locally $\mathbf{H}^k_x$ exponentially stable. As in Subsection \ref{subsec:stab_D}, in order to prove the latter, one may look for a quadratic $\mathbf{H}^k_x$ Lyapunov functional, namely, a functional of the form \eqref{eq:def_bar_lyap_Riem} which fulfills the assumptions of the Proposition \ref{prop:existence_Lyap_Riem} below -- whose proof is identical to that of Proposition \ref{prop:existence_Lyap}.

\begin{proposition} \label{prop:existence_Lyap_Riem}
Let $k \in \{1, 2\}$. Assume that for any fixed $T>0$, there exists $\delta>0$ such that the following holds: if $r \in C^0([0, T]; \mathbf{H}^k_x)$ is solution to \eqref{eq:syst_Riem_1} and if $\|r(\cdot, t)\|_{\mathbf{C}^{k-1}_{x}} \leq \delta$ for all $t \in [0, T]$, then there exists $\eta \geq 1$ and $\alpha >0$ such that
\begin{linenomath}
\begin{equation*}
\begin{aligned}
\eta^{-1} \|r(\cdot, t)\|_{\mathbf{H}^k_x}^2 &\leq \mathcal{L}(t) \leq \eta \|r(\cdot, t)\|_{\mathbf{H}^k_x}^2,  &&\text{for all } t \in [0, T] \\
\mathcal{L}(t) &\leq e^{-2 \beta t} \mathcal{L}(0), &&\text{for all }t \in [0, T]. 
\end{aligned}
\end{equation*}
\end{linenomath}
Then, the steady state $r \equiv 0$ of \eqref{eq:syst_y} is locally $\mathbf{H}^k_x$ exponentially stable.
\end{proposition}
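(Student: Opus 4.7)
The plan is to run the standard Bastin--Coron bootstrap argument, which is promised to mirror that of Proposition \ref{prop:existence_Lyap}, but now on the diagonal system \eqref{eq:syst_Riem_1} in place of the physical system \eqref{eq:syst_y}. I would rely on three ingredients: the local well-posedness of \eqref{eq:syst_Riem_1} in $\mathbf{H}^k_x$ (which follows from Theorem \ref{thm:well-posedness} together with the equivalence induced by the change of variable \eqref{eq:change_var_Li}, yielding a smallness threshold $\delta_0 > 0$ and the blow-up alternative); the Sobolev embedding $H^k(0, \ell_i) \hookrightarrow C^{k-1}([0, \ell_i])$, which gives a constant $C_s > 0$ with $\|\cdot\|_{\mathbf{C}^{k-1}_x} \leq C_s \|\cdot\|_{\mathbf{H}^k_x}$; and the two Lyapunov inequalities posited in the statement.

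First I would fix $\delta>0$, $\eta \geq 1$ and $\beta > 0$ as in the hypothesis, and set
\[
\varepsilon := \min\!\left\{\frac{\delta_0}{\eta}, \, \frac{\delta}{2 C_s \eta}\right\}.
\]
Given any $r^0 \in \mathbf{H}^k_x$ with $\|r^0\|_{\mathbf{H}^k_x} \leq \varepsilon$ satisfying the $(k-1)$-order compatibility conditions, well-posedness yields a unique maximal solution $r \in C^0([0, T_{\max}); \mathbf{H}^k_x)$. I would then introduce the bootstrap time
\[
T^\star := \sup \bigl\{\, t \in [0, T_{\max}) : \|r(\cdot, s)\|_{\mathbf{C}^{k-1}_x} \leq \delta \text{ for all } s \in [0, t] \,\bigr\},
\]
and observe that on $[0, T^\star]$ the assumed Lyapunov bounds apply, giving
\[
\|r(\cdot, t)\|_{\mathbf{H}^k_x}^2 \leq \eta\, \mathcal{L}(t) \leq \eta\, e^{-2\beta t} \mathcal{L}(0) \leq \eta^2 e^{-2\beta t} \|r^0\|_{\mathbf{H}^k_x}^2.
\]
Combined with the embedding, this forces $\|r(\cdot, t)\|_{\mathbf{C}^{k-1}_x} \leq C_s \eta \varepsilon \leq \delta/2$ on the same interval, strictly below the bootstrap threshold $\delta$.

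From this strict bound, a standard continuity argument (using $r \in C^0([0, T_{\max}); \mathbf{H}^k_x)$ and the embedding) rules out $T^\star < T_{\max}$, so $T^\star = T_{\max}$. In particular, $\|r(\cdot, t)\|_{\mathbf{H}^k_x} \leq \eta \varepsilon \leq \delta_0$ throughout $[0, T_{\max})$, and the second clause of Theorem \ref{thm:well-posedness} forces $T_{\max} = +\infty$. The estimate $\|r(\cdot, t)\|_{\mathbf{H}^k_x} \leq \eta\, e^{-\beta t} \|r^0\|_{\mathbf{H}^k_x}$ then holds for all $t \geq 0$, which is precisely local $\mathbf{H}^k_x$ exponential stability.

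The only delicate point is the simultaneous closure of two distinct smallness conditions: the $\mathbf{H}^k_x$ bound required to prolong the solution in time via the well-posedness alternative, and the $\mathbf{C}^{k-1}_x$ bound required to activate the Lyapunov hypothesis. The single choice of $\varepsilon$ above, calibrated so that $\eta \varepsilon$ lies below both $\delta_0$ and $\delta/(2 C_s)$, handles them at once; the remainder of the argument is routine bookkeeping, which is why the author is content to defer to the proof of Proposition \ref{prop:existence_Lyap}.
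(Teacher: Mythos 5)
Your argument is correct and is precisely the standard Bastin--Coron bootstrap that the paper intends here: the paper omits the proof entirely, stating only that it is identical to that of Proposition \ref{prop:existence_Lyap}, which in turn is deferred to \cite{BC2016, bastin2017exponential}. Your write-up supplies exactly that omitted argument (well-posedness plus blow-up alternative, Sobolev embedding, a continuity/bootstrap closure of the two smallness conditions), with the constants calibrated correctly, so there is nothing to flag.
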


As before, we may look for functions $Q_i$ ($i \in \mathcal{I}$) such that the associated $\mathcal{L}$ fulfills the assumptions of Proposition \ref{prop:existence_Lyap_Riem}, and a lemma -- Lemma \ref{lem:class_barQi_Riem} given below -- provides a class of such functions.
Let us introduce additional notation for functions $Q_i$ ($i \in \mathcal{I}$) having values in $\mathbb{D}^{12}$. We denote $Q_i = \mathrm{diag}(Q_{i}^-, \ Q_{i}^+)$, where $Q_{i}^-, Q_{i}^+ \in C^1([0, \ell_i]; \mathbb{D}^6)$. Moreover, for all $n \in \mathcal{N}$, we define the matrices $Q_n^\mathrm{out}, Q_n^\mathrm{in}, \bar{D}_n \in \mathbb{R}^{6k_n \times 6 k_n}$ as follows (see \eqref{eq:nota_In_indices}-\eqref{eq:def_kn}):
\begin{equation} \label{eq:def_Qnout_Qnin_1}
Q_n^\mathrm{out} =
\begin{cases}
Q_1^+(0) & n=0\\
Q_{n}^-(\ell_n) & n \in \mathcal{N}_S \setminus \{0\}\\
\mathrm{diag}(Q_{n}^-(\ell_n), \ Q_{i_2}^+(0), \ Q_{i_3}^+(0) , \ \ldots, \ Q_{i_{k_n}}^+(0) ) & n \in \mathcal{N}_M,
\end{cases}
\end{equation}
\begin{equation} \label{eq:def_Qnout_Qnin_2}
\begin{aligned}
Q_n^\mathrm{in} &= 
\begin{cases}
Q_1^-(0) & n=0\\
Q_n^+(\ell_n) & n\in\mathcal{N}_S \setminus\{0\}\\
\mathrm{diag}(Q_{n}^+(\ell_n), \ Q_{i_2}^-(0), \ Q_{i_3}^-(0) , \ \ldots, \ Q_{i_{k_n}}^-(0) ) &n\in \mathcal{N}_M,
\end{cases}\\
\bar{D}_n &= 
\begin{cases}
D_1(0) & n=0\\
D_n(\ell_n) & n \in \mathcal{N}_S\setminus \{0\}\\
\mathrm{diag} ( D_n(\ell_n), \ D_{ i_2}(0), \ D_{i_3}(0) , \ \ldots, \ D_{i_{k_n}}(0) ) & n\in\mathcal{N}_M.
\end{cases}
\end{aligned}
\end{equation}

\begin{lemma} \label{lem:class_barQi_Riem}
Assume that there exists $Q_i  \in C^1([0, \ell_i]; \mathbb{R}^{12\times 12})$ $(i \in \mathcal{I})$, fulfilling
\begin{enumerate}[label=(\roman*)]
\item \label{pty:Qi_diag_posDef}
for all $x \in [0, \ell_i]$ and all $i\in \mathcal{I}$, $Q_i(x)$ is diagonal and positive definite;
\item \label{pty:Si_negDef}
for all $x \in [0, \ell_i]$ and all $i\in \mathcal{I}$, $S_i(x)$ is negative definite, \\
where $S_i$ is defined by $S_i = \frac{\mathrm{d}}{\mathrm{d}x}( Q_i \mathbf{D}_i ) - Q_i B_i - B_i^\intercal Q_i$;
\item \label{pty:calMn_negSemiDef}
for all $n \in \mathcal{N}$, $\mathcal{M}_n \in \mathbb{R}^{6 k_n \times 6 k_n}$ is negative semi-definite, where $\mathcal{M}_n$ is defined by $($see \eqref{eq:def_calBn}$)$
\begin{equation} \label{eq:def_calMn}
\mathcal{M}_n = \mathcal{B}_n^\intercal Q_n^\mathrm{out} \bar{D}_n \mathcal{B}_n - Q_n^\mathrm{in} \bar{D}_n.
\end{equation}
\end{enumerate}
Then, the associated $\mathcal{L}$ $($see \eqref{eq:def_bar_lyap_Riem}$)$ fulfills the assumptions of Proposition \ref{prop:existence_Lyap}.
\end{lemma}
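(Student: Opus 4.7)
The proof follows closely that of Lemma \ref{lem:class_barQi}, now in the diagonal setting of System \eqref{eq:syst_Riem_1}, and relies on the same arguments from \cite{BC2016, bastin2017exponential}. Fix $k\in\{1,2\}$, let $r \in C^0([0,T];\mathbf{H}^k_x)$ solve \eqref{eq:syst_Riem_1} with $\|r(\cdot,t)\|_{\mathbf{C}^{k-1}_x}\le \delta$ for all $t$, and assume momentarily that each $r_i$ is of class $C^k$ in $[0,\ell_i]\times[0,T]$. Property \ref{pty:Qi_diag_posDef}, together with the invertibility of $\mathbf{D}_i(x)$ (since each $D_i(x)$ is positive definite diagonal), yields -- exactly as in \eqref{eq:equiv_barLyap_L2timeder} and by using the governing equations \eqref{eq:gov_riem} to trade time derivatives for space derivatives -- the equivalence
\begin{equation*}
\eta^{-1}\|r(\cdot,t)\|_{\mathbf{H}^k_x}^2 \le \mathcal{L}(t) \le \eta\,\|r(\cdot,t)\|_{\mathbf{H}^k_x}^2, \qquad \text{for all }t \in [0,T],
\end{equation*}
for some $\eta\ge 1$ depending on $\delta$ and the $Q_i$'s.

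Differentiating $\mathcal{L}_{\alpha i}$ in time, using the symmetry of $Q_i$, substituting $\partial_t^{\alpha+1}r_i = -\mathbf{D}_i\partial_x(\partial_t^\alpha r_i) - B_i\partial_t^\alpha r_i + \partial_t^\alpha g_i(\cdot,r_i)$, and integrating by parts (the matrix $Q_i\mathbf{D}_i$ being symmetric because both factors are diagonal), one obtains
\begin{equation*}
\frac{\mathrm{d}}{\mathrm{d}t}\mathcal{L}_{\alpha i} = \int_0^{\ell_i}\langle \partial_t^\alpha r_i, S_i\,\partial_t^\alpha r_i\rangle\,dx + 2\int_0^{\ell_i}\langle \partial_t^\alpha r_i, Q_i\,\partial_t^\alpha g_i(\cdot,r_i)\rangle\,dx - \bigl[\langle \partial_t^\alpha r_i, Q_i\mathbf{D}_i\,\partial_t^\alpha r_i\rangle\bigr]_0^{\ell_i}.
\end{equation*}
By \ref{pty:Si_negDef}, the first integrand is bounded above by $c_S\,|\partial_t^\alpha r_i|^2$ for some constant $c_S<0$ uniform in $i$ and $x$ (by continuity of $S_i$ on the compact intervals). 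The quadratic structure of $g_i$ (Remark \ref{rem:form_sources}), together with the time-derivative identities analogous to \eqref{eq:deriv_bargi} and the governing system, then bounds the nonlinear contribution summed over $i$ and $\alpha$ by $C(\delta+\delta^2)\sum_\alpha\|\partial_t^\alpha r\|_{\mathbf{L}^2_x}^2$ for some constant $C>0$ depending only on the coefficients of \eqref{eq:syst_Riem_1}.

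The key step is to identify the boundary contributions with $\mathcal{M}_n$. Since $Q_i\mathbf{D}_i = \mathrm{diag}(-Q_i^-D_i,\,Q_i^+D_i)$, splitting each endpoint value of $\partial_t^\alpha r_i$ into its $\pm$ parts and summing over $i\in\mathcal{I}$, each endpoint contribution carries a $+$ sign in front of the component that is incoming there, and a $-$ sign in front of the outgoing one (in the sense of Definition \ref{def:out_in_info}). Grouping contributions by the node $n\in\mathcal{N}$ to which they are attached -- distinguishing the three cases $n=0$, $n\in\mathcal{N}_S\setminus\{0\}$, $n\in\mathcal{N}_M$ -- and matching with \eqref{eq:def_Qnout_Qnin_1}--\eqref{eq:def_Qnout_Qnin_2}, one gets
\begin{equation*}
-\sum_{i\in\mathcal{I}}\bigl[\langle \partial_t^\alpha r_i, Q_i\mathbf{D}_i\,\partial_t^\alpha r_i\rangle\bigr]_0^{\ell_i} = \sum_{n\in\mathcal{N}}\Bigl(\langle \partial_t^\alpha r_n^{\mathrm{out}}, Q_n^{\mathrm{out}}\bar{D}_n\,\partial_t^\alpha r_n^{\mathrm{out}}\rangle - \langle \partial_t^\alpha r_n^{\mathrm{in}}, Q_n^{\mathrm{in}}\bar{D}_n\,\partial_t^\alpha r_n^{\mathrm{in}}\rangle\Bigr).
\end{equation*}
Differentiating the nodal conditions \eqref{eq:nodal_cond_calBn} in time gives $\partial_t^\alpha r_n^{\mathrm{out}} = \mathcal{B}_n\partial_t^\alpha r_n^{\mathrm{in}}$; substituting turns the right-hand side into $\sum_{n\in\mathcal{N}} \langle \partial_t^\alpha r_n^{\mathrm{in}}, \mathcal{M}_n\,\partial_t^\alpha r_n^{\mathrm{in}}\rangle$, which is nonpositive by \ref{pty:calMn_negSemiDef}. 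Choosing $\delta$ small enough that $C(\delta+\delta^2)+c_S<0$ and using the equivalence from the first paragraph, these estimates combine into $\frac{\mathrm{d}}{\mathrm{d}t}\mathcal{L}\le -2\beta\,\mathcal{L}$ for some $\beta>0$; Gronwall's inequality then yields the decay, and the standard density argument of \cite[Comment 4.6]{BC2016} extends the estimates to $r\in C^0([0,T];\mathbf{H}^k_x)$.

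The main obstacle is precisely this boundary regrouping: one must track the $\pm$-splitting of $Q_i\mathbf{D}_i$, the opposite signs attached to the values at $x=\ell_i$ and $x=0$, and the node-by-node pairing of outgoing/incoming components, so that the block-diagonal matrices $Q_n^{\mathrm{out}},Q_n^{\mathrm{in}},\bar{D}_n$ of \eqref{eq:def_Qnout_Qnin_1}--\eqref{eq:def_Qnout_Qnin_2} and the matrices $\mathcal{B}_n$ of \eqref{eq:def_calBn} assemble exactly into the combination defining $\mathcal{M}_n$ in \eqref{eq:def_calMn}. Everything else is a direct transcription of the physical-variable argument of Lemma \ref{lem:class_barQi}.
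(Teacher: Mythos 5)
Your proof is correct and follows essentially the same route as the paper: transcribe the argument of Lemma \ref{lem:class_barQi} to the diagonal variables, and then show via the $\pm$-splitting $\langle r_i, Q_i\mathbf{D}_i r_i\rangle = -\langle r_i^-, Q_i^- D_i r_i^-\rangle + \langle r_i^+, Q_i^+ D_i r_i^+\rangle$, the node-by-node regrouping, and the nodal conditions \eqref{eq:nodal_cond_calBn} that the boundary terms equal $\sum_{n\in\mathcal{N}}\langle r_n^{\mathrm{in}}, \mathcal{M}_n r_n^{\mathrm{in}}\rangle \le 0$. This is exactly the paper's proof, with the intermediate computations written out in more detail.
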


\begin{remark}
In Lemma \ref{lem:class_barQi_Riem}, instead of looking for $Q_i \in C^1([0, \ell_i]; \mathbb{D}^{12}_{++})$ $(i \in \mathcal{I})$, we could rather look for $Q_i \in C^1([0, \ell_i]; \mathbb{S}^{12}_{++})$ such that the product $(Q_i \mathbf{D}_i)(x)$ is symmetric for all $x \in [0, \ell_i]$, as in Lemma \ref{lem:class_barQi}. However, it is sufficient to assume the former, and is even equivalent if $\mathbf{D}_i(x)$ has distinct diagonal entries -- as a matrix commuting with a diagonal matrix with distinct diagonal entries is itself diagonal.
\end{remark}

\begin{proof}[Proof of Lemma \ref{lem:class_barQi_Riem}]

The proof of this lemma is identical to that of Lemma \ref{lem:class_barQi}, except the treatment of the boundary terms. Indeed, following the same procedure, one deduces that if the functions $Q_i$ ($i \in \mathcal{I}$) fulfill the properties \ref{pty:Qi_diag_posDef}, \ref{pty:Si_negDef} of Lemma \ref{lem:class_barQi_Riem} and if ${\mathcal{R}}(r,t)\leq 0$, where $\mathcal{R}$ is defined by 
\begin{linenomath}
\begin{equation*}
\begin{aligned}
\mathcal{R}(r,t) &= \left \langle r_1\,, Q_1 {\mathbf{D}}_1 u_1 \right \rangle(0, t) - {\textstyle \sum_{n \in \mathcal{N}_S \setminus \{0\}}} \left \langle r_n \,, Q_n \mathbf{D}_n r_n \right \rangle (\ell_n, t),\\
&\quad+ { \textstyle \sum_{n \in \mathcal{N}_M}} \big[ - \left \langle r_n \,, Q_n \mathbf{D}_n r_n \right \rangle (\ell_n, t) + {\textstyle \sum_{j=2}^N} \left \langle r_{i_j} \,, Q_{i_j} \mathbf{D}_{i_j} r_{i_j} \right \rangle (0, t)\big],
\end{aligned}
\end{equation*}
\end{linenomath} 
for all $t \in [0, T]$ and all $r \in C^0([0, T]; \mathbf{C}^0_x)$ fulfilling the nodal conditions of \eqref{eq:syst_Riem_1}, then the associated $\mathcal{L}$ satisfies the assumptions of Proposition \ref{prop:existence_Lyap_Riem}. However, one can compute further the term $\mathcal{R}(r,t)$. Noticing that $\langle r_i \,, Q_i \mathbf{D}_i r_i \rangle = -\langle r_i^-  \,, Q_{i}^-D_ir_i^-  \rangle + \langle r_i^+  \,, Q_{i}^+ D_i r_i^+  \rangle$ for all $i \in \mathcal{I}$, and using the definition \eqref{eq:r_out_in_mult_simp} of $r^\mathrm{in}_n, r^\mathrm{out}_n$ as well as the nodal conditions \eqref{eq:nodal_cond_calBn}, we obtain that
$\mathcal{R}(r,t) = \sum_{n \in \mathcal{N}} \langle r^\mathrm{in}_n(t), \mathcal{M}_n r^\mathrm{in}_n(t) \rangle$ with $\mathcal{M}_n$ defined by \eqref{eq:def_calMn}, which is nonpositive by \ref{pty:calMn_negSemiDef}.
\end{proof}

Note that functions $Q_i$ ($i \in\mathcal{I}$) defined by \eqref{eq:rel_Q_barQ} with $\overline{Q}_i$ fulfilling the assumptions of Lemma \ref{lem:class_barQi}, do fulfill the assumptions of Lemma \ref{lem:class_barQi_Riem}. Indeed, basic computations yield the following proposition.

\begin{proposition} \label{prop:rel_Lyap_Q_barQ}
Let $i \in \mathcal{I}$ and $\overline{Q}_i, Q_i \in C^1([0, \ell_i]; \mathbb{S}^{12})$ be such that \eqref{eq:rel_Q_barQ}. Then, 
\begin{enumerate}[label=\alph*)]
\item \label{item:equiv_barSi_Si}
for any $x \in [0, \ell_i]$, if $(\overline{Q}_iA_i)(x)$ is symmetric, then $\overline{S}_i(x)$ is negative definite if and only if $S_i(x)$ is negative definite;
\item \label{item:equi_barQi_Qi}
for any $x \in [0, \ell_i]$, $\overline{Q}_i(x)$ is positive definite if and only if $Q_i(x)$ is positive definite;
\item for any $y,r \in C^0([0, T]; \mathbf{C}^0_x)$ fulfilling the nodal conditions of \eqref{eq:syst_y} and \eqref{eq:syst_Riem_1} respectively, and any $t \in [0, T]$, $\overline{\mathcal{R}}(y,t)\leq 0$ if and only if $\mathcal{R}(r,t) \leq 0$.
\end{enumerate}
\end{proposition}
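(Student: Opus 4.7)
The underlying principle is that the linear change of variable $r_i = L_i y_i$ induces a pointwise congruence on the quadratic forms involved, so $\overline{Q}_i$ and $Q_i$, as well as $\overline{S}_i$ and $S_i$, and the boundary integrands, are all related by conjugation with $L_i^{-1}$. Since $L_i(x)$ is invertible for every $x$, congruence preserves both definiteness and sign of scalar products.

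First I would dispatch part \ref{item:equi_barQi_Qi}: writing $\langle \xi , Q_i(x)\xi\rangle = \langle L_i(x)^{-1}\xi , \overline{Q}_i(x) L_i(x)^{-1}\xi\rangle$ for any $\xi\in\mathbb{R}^{12}$ and using the invertibility of $L_i(x)$ shows positive definiteness transfers in both directions (this is Sylvester's law of inertia applied pointwise).

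For part \ref{item:equiv_barSi_Si}, the plan is to prove the pointwise identity
\begin{equation*}
S_i(x) = (L_i(x)^{-1})^\intercal\, \overline{S}_i(x)\, L_i(x)^{-1},
\end{equation*}
which, combined with the invertibility of $L_i(x)$, gives the equivalence of negative definiteness. To obtain this identity I would exploit the relation $A_i = L_i^{-1}\mathbf{D}_i L_i$ (Lemma~\ref{lem:diag_Ai}), which rewrites as $L_i^{-1}\mathbf{D}_i = A_i L_i^{-1}$, so that $Q_i\mathbf{D}_i = (L_i^{-1})^\intercal \overline{Q}_i A_i L_i^{-1}$. Differentiating in $x$ produces three terms, two of which involve $\tfrac{\mathrm{d}}{\mathrm{d}x}L_i^{-1}$. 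The expressions $Q_i B_i$ and $B_i^\intercal Q_i$, obtained by substituting the formula $B_i = L_i\overline{B}_i L_i^{-1} + L_i A_i\tfrac{\mathrm{d}}{\mathrm{d}x}L_i^{-1}$, produce matching terms involving $\tfrac{\mathrm{d}}{\mathrm{d}x}L_i^{-1}$; the crucial point is that $A_i^\intercal \overline{Q}_i = (\overline{Q}_i A_i)^\intercal = \overline{Q}_i A_i$ by the symmetry hypothesis, which makes the derivative-of-$L_i^{-1}$ pieces cancel exactly, leaving $S_i = (L_i^{-1})^\intercal[\tfrac{\mathrm{d}}{\mathrm{d}x}(\overline{Q}_i A_i) - \overline{Q}_i\overline{B}_i - \overline{B}_i^\intercal\overline{Q}_i]L_i^{-1} = (L_i^{-1})^\intercal\overline{S}_i L_i^{-1}$. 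This algebraic cancellation is the only step that is not essentially automatic, and the symmetry assumption on $\overline{Q}_i A_i$ is exactly what makes it work.

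For part (c), the nodal conditions of \eqref{eq:syst_y} and of \eqref{eq:syst_Riem_1} are equivalent under $r_i = L_i y_i$, by construction in Subsection~\ref{subsec:riem}. So it suffices to prove that $\overline{\mathcal{R}}(y,t) = \mathcal{R}(r,t)$ whenever $r_i = L_i y_i$. The computation used in part \ref{item:equiv_barSi_Si} already gives $\overline{Q}_i A_i = L_i^\intercal Q_i \mathbf{D}_i L_i$, hence for each $i\in\mathcal{I}$ and each endpoint
\begin{equation*}
\langle y_i,\, \overline{Q}_i A_i\, y_i\rangle = \langle L_i y_i,\, Q_i \mathbf{D}_i\, L_i y_i\rangle = \langle r_i,\, Q_i \mathbf{D}_i\, r_i\rangle.
\end{equation*}
Summing the boundary contributions at the node $n=0$, at multiple nodes, and at the remaining simple nodes, one matches $\overline{\mathcal{R}}_0,\overline{\mathcal{R}}_M,\overline{\mathcal{R}}_S$ term by term with the corresponding pieces of $\mathcal{R}$, giving $\overline{\mathcal{R}}(y,t)=\mathcal{R}(r,t)$ and thereby the claimed sign equivalence.
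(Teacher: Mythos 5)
Your proposal is correct and carries out exactly the ``basic computations'' that the paper leaves implicit: the pointwise congruence $Q_i=(L_i^{-1})^\intercal\overline{Q}_i L_i^{-1}$, the identity $Q_i\mathbf{D}_i=(L_i^{-1})^\intercal\overline{Q}_iA_iL_i^{-1}$ from $L_i^{-1}\mathbf{D}_i=A_iL_i^{-1}$, the cancellation of the $\frac{\mathrm{d}}{\mathrm{d}x}L_i^{-1}$ terms via the symmetry of $\overline{Q}_iA_i$, and the term-by-term matching of $\overline{\mathcal{R}}$ with $\mathcal{R}$. Nothing is missing.
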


In view of Proposition \ref{prop:rel_Lyap_Q_barQ}, let $\overline{Q}_i$ ($i \in \mathcal{I}$) be defined by \eqref{eq:ansatz_Qibar}, where each $w_i\in C^1([0, \ell_i])$ is increasing with its derivative fulfilling \eqref{eq:cond_barSi_neg} for all $x \in [0, \ell_i]$, where the functions $\mathbf{W}_i$ are chosen among \eqref{eq:boldWi_I}-\eqref{eq:boldWi_MC}-\eqref{eq:boldWi_MCfrac}, and where $\rho_i = \rho >0$ for all $i \in \mathcal{I}$; furthermore, let $Q_i$ ($i \in \mathcal{I}$) be the associated functions defined by \eqref{eq:rel_Q_barQ}.
Then, by Proposition \ref{prop:rel_Lyap_Q_barQ} \ref{item:equiv_barSi_Si}-\ref{item:equi_barQi_Qi}, the properties \ref{pty:Qi_diag_posDef}-\ref{pty:Si_negDef} of Lemma \ref{lem:class_barQi_Riem} are fulfilled by $Q_i$ ($i \in \mathcal{I}$).
After some computations, one can obtain that
\begin{linenomath}
\begin{equation*}
Q_i = \rho Q_i^\mathcal{D} + \frac{w_i}{4} \begin{bmatrix}
J_i + J_i^\intercal & -J_i + J_i^\intercal\\
J_i - J_i^\intercal & -(J_i+J_i^\intercal)
\end{bmatrix},
\end{equation*}
\end{linenomath}
where $J_i = U_i \mathbf{C}_i^{\sfrac{1}{2}} \mathbf{W}_i \mathbf{C}_i^{-\sfrac{1}{2}} U_i^\intercal D_i^{-1}$.
Depending on $\mathbf{W}_i$, $J_i$ and $Q_i$ take the form\footnote{In this form, it is straightforward that $Q_i$ has values in $\mathbb{S}_{++}^{12}$ if and only if $\rho > |w_i(x)| C_{Q_i}(x)$ for all $x \in [0, \ell_i]$, where $C_{Q_i}(x)>0$ is the largest diagonal entry of $D_i(x)$ if \eqref{eq:boldWi_I}, the largest diagonal entry of $D_i(x)^{-1}$ if \eqref{eq:boldWi_MC}, and $C_{Q_i}\equiv 1$ if \eqref{eq:boldWi_MCfrac}.}
\begin{linenomath}
\begin{equation*}
J_i = \begin{cases}
D_i^{-1} &\text{if }\eqref{eq:boldWi_I}\\
D_i^{-3} &\text{if }\eqref{eq:boldWi_MC} \\
D_i^{-2} &\text{if }\eqref{eq:boldWi_MCfrac}
\end{cases}, \quad 
Q_i = \begin{cases}
\frac{1}{2} \mathrm{diag}\big(\rho \mathds{I}_6 + w_i D_i \,, \rho \mathds{I}_6 - w_i D_i \big) D_i^{-2} &\text{if }\eqref{eq:boldWi_I}\\
\frac{1}{2} \mathrm{diag}\big(\rho \mathds{I}_6 + w_i D_i^{-1} \,, \rho \mathds{I}_6 - w_i D_i^{-1} \big) D_i^{-2} &\text{if }\eqref{eq:boldWi_MC}\\
\frac{1}{2} \mathrm{diag} \big ((\rho + w_i)\mathds{I}_6 \,, (\rho - w_i) \mathds{I}_6 \big ) D_i^{-2} &\text{if }\eqref{eq:boldWi_MCfrac}.
\end{cases}
\end{equation*}
\end{linenomath}

Henceforth, we focus on the latter case \eqref{eq:boldWi_MCfrac} for simplicity, since $Q_i$ writes as the energy matrix $Q_i^\mathcal{D}$ (see \eqref{eq:def_energy_i_D}) multiplied by some weight functions $\rho+w_i$ and $\rho-w_i$:
\begin{equation} \label{eq:Qi_multEnergy}
Q_i = \mathrm{diag}((\rho + w_i)\mathds{I}_6 \,, (\rho - w_i) \mathds{I}_6) Q_i^\mathcal{D}.
\end{equation}
We now analyze the property \ref{pty:calMn_negSemiDef} of Lemma \ref{lem:class_barQi_Riem} -- i.e. the matrices $\mathcal{M}_n$ ($n\in\mathcal{N}$) -- by means of Proposition \ref{prop:calMn_form} below, which is proved in Appendix \ref{app:calMn_form}.
Beforehand, for any $n \in \mathcal{N}$, let us define the constant matrix $\overline{w}_n \in \mathbb{R}^{6k_n \times 6k_n}$ by
\begin{equation}\label{eq:def_Jbarn_wbarn}
\begin{aligned}
\overline{w}_n &= \begin{cases}
-w_1(0)\mathds{I}_6, & \text{if } n=0\\
w_n(\ell_n)\mathds{I}_6, & \text{if } n \in \mathcal{N}_S\setminus \{0\}\\
\mathrm{diag}(w_n(\ell_n) \mathds{I}_6\, , \, -w_{i_2}(0)\mathds{I}_6\, , \, \ldots\, , \, -w_{i_{k_n}}(0)\mathds{I}_6), & \text{if } n\in\mathcal{N}_M.
\end{cases}
\end{aligned}
\end{equation}

\begin{proposition}\label{prop:calMn_form}
Let $Q_i$ $(i \in \mathcal{I})$ be given by \eqref{eq:Qi_multEnergy}.
\begin{enumerate}[label=\arabic*)]
\item \label{item:form_calMn_mult}
Let $n \in \mathcal{N}_M$. If $K_n$ is symmetric positive definite or $K_n = \mathds{O}_6$, then $\mathcal{M}_n$ is congruent\footnote{
For any $d \in \{1, 2, \ldots\}$, $A, B \in \mathbb{R}^{d \times d}$ are called congruent if there exists $P \in \mathbb{R}^{d\times d}$, invertible, such that $A = P^\intercal B P$. In particular, $A$ is negative semi-definite if and only if $B$ is also negative semi-definite.
} to the matrix $($see \eqref{eq:def_kn}, \eqref{eq:def_block_diag}, \eqref{eq:def_I_crochet}$)$
\begin{equation} \label{eq:def_calMn_tilda_mult}
\begin{aligned}
\widetilde{\mathcal{M}}_n &= - 2 \rho k_n^{-1} \textswab{I}_{n} \mathrm{diag}(\overline{K}_n)  \textswab{I}_n + 2
\textswab{I}_n \overline{w}_n \mathrm{diag}(\sigma_i^n) \textswab{I}_n - \textswab{I}_n \overline{w}_n\mathrm{diag}(\sigma^n+\overline{K}_n)  \\
& -  \mathrm{diag}(\sigma^n+\overline{K}_n)\overline{w}_n \textswab{I}_n + \overline{w}_n \mathrm{diag}(\sigma^n+\overline{K}_n)\mathrm{diag}(\sigma_i^n)^{-1}\mathrm{diag}(\sigma^n+\overline{K}_n).
\end{aligned}
\end{equation}
\item \label{item:form_calMn_simp}
Let $n\in \mathcal{N}_S$. If $K_n$ is symmetric positive definite (controlled node), then $\mathcal{M}_n$ is congruent to the matrix
\begin{equation} \label{eq:def_calMn_tilda_simp}
\widetilde{\mathcal{M}}_n = \rho  \big[ (\mathds{I}_6 + \Upsilon^n)^{-2}(\mathds{I}_6 - \Upsilon^n)^2 - \mathds{I}_6 \big] + \overline{w}_n \big[
(\mathds{I}_6 + \Upsilon^n)^{-2}(\mathds{I}_6 - \Upsilon^n)^2 + \mathds{I}_6 \big],
\end{equation}
where $\Upsilon^n \in \mathbb{D}_{++}^6$ has the eigenvalues of $\bar{D}_n^{\sfrac{1}{2}} (\gamma_n^n)^\intercal \overline{K}_n \gamma_n^n \bar{D}_n^{\sfrac{1}{2}}$ as diagonal entries.\\
If $K_n = \mathds{O}_6$ or if the beam clamped then $\mathcal{M}_n = \overline{w}_n \bar{D}_n^{-1}$ (see Remark \ref{rem:calBn_free_clamped}).
\end{enumerate}
\end{proposition}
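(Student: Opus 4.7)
The plan is to transform $\mathcal{M}_n = \mathcal{B}_n^\intercal Q_n^{\mathrm{out}}\bar{D}_n\mathcal{B}_n - Q_n^{\mathrm{in}}\bar{D}_n$ into $\widetilde{\mathcal{M}}_n$ via a congruence that peels off the ``geometric'' factor $\gamma_i^n = \overline{R}_i\mathbf{C}_i^{\sfrac{1}{2}}U_i^\intercal$ appearing inside $\mathcal{B}_n$. Two preparatory observations drive the argument. First, from the explicit form \eqref{eq:Qi_multEnergy} together with the definitions \eqref{eq:def_Qnout_Qnin_1}--\eqref{eq:def_Qnout_Qnin_2}, and using that $\overline{w}_n$ is block diagonal with scalar multiples of $\mathds{I}_6$ on each block matching the block structure of $\bar{D}_n$, one reads off the clean identities $Q_n^{\mathrm{out}}\bar{D}_n = \tfrac{1}{2}(\rho + \overline{w}_n)\bar{D}_n^{-1}$ and $Q_n^{\mathrm{in}}\bar{D}_n = \tfrac{1}{2}(\rho - \overline{w}_n)\bar{D}_n^{-1}$, hence
\begin{equation*}
2\mathcal{M}_n = \mathcal{B}_n^\intercal(\rho + \overline{w}_n)\bar{D}_n^{-1}\mathcal{B}_n - (\rho - \overline{w}_n)\bar{D}_n^{-1}.
\end{equation*}
Second, a direct computation from \eqref{eq:def_gammai}--\eqref{eq:def_sigmai} yields the key identity $\bar{D}_n^{-1} = \mathbf{G}_n^\intercal\,\mathrm{diag}(\sigma_i^n)\,\mathbf{G}_n$ at multiple nodes, and the scalar analogue $\bar{D}_n^{-1} = \gamma^\intercal \sigma\gamma$ at simple nodes (with $\gamma, \sigma$ the appropriate pair from \eqref{eq:def_gammai}--\eqref{eq:def_sigmai}). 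This is what lets $\bar{D}_n^{-1}$ be absorbed into the geometric factors already present in $\mathcal{B}_n$.

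For simple nodes, the free/clamped cases $\mathcal{B}_n = \pm\mathds{I}_6$ (Remark \ref{rem:calBn_free_clamped}) give $\mathcal{M}_n = \overline{w}_n\bar{D}_n^{-1}$ at once. For the controlled case, inserting $\mathcal{B}_n = \gamma^{-1}(\sigma + \overline{K})^{-1}(\sigma - \overline{K})\gamma$ and $\bar{D}_n^{-1} = \gamma^\intercal\sigma\gamma$ factors out $\gamma^\intercal(\cdot)\gamma$; since $\rho$ and $\overline{w}_n$ are scalar here, the remaining task is to reduce
\begin{equation*}
(\rho + \overline{w}_n)(\sigma - \overline{K})(\sigma + \overline{K})^{-1}\sigma(\sigma + \overline{K})^{-1}(\sigma - \overline{K}) - (\rho - \overline{w}_n)\sigma.
\end{equation*}
Writing $\sigma \pm \overline{K} = \sigma^{\sfrac{1}{2}}(\mathds{I}_6 \pm \tau)\sigma^{\sfrac{1}{2}}$ with $\tau := \sigma^{-\sfrac{1}{2}}\overline{K}\sigma^{-\sfrac{1}{2}}$ symmetric, and using that $\tau$ commutes with $\mathds{I}_6 \pm \tau$, the middle product telescopes to $\sigma^{\sfrac{1}{2}}(\mathds{I}_6 + \tau)^{-2}(\mathds{I}_6 - \tau)^2\sigma^{\sfrac{1}{2}}$. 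An orthogonal diagonalization $\tau = O^\intercal \Upsilon^n O$ then delivers \eqref{eq:def_calMn_tilda_simp}; the identification of the eigenvalues of $\tau$ with those of $\bar{D}_n^{\sfrac{1}{2}}(\gamma_n^n)^\intercal\overline{K}_n\gamma_n^n\bar{D}_n^{\sfrac{1}{2}}$ follows from two applications of the standard similarity $AB \sim BA$.

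For multiple nodes, I would introduce $\widetilde{\mathcal{B}}_n := \mathbf{G}_n\mathcal{B}_n\mathbf{G}_n^{-1} = 2\,\mathrm{diag}(\sigma^n+\overline{K}_n)^{-1}\textswab{I}_n\mathrm{diag}(\sigma_i^n) - \mathds{I}_{6k_n}$ and exploit that $\overline{w}_n$ commutes with both $\mathbf{G}_n^\intercal$ and $\mathrm{diag}(\sigma_i^n)$ (all three are block diagonal with compatible structure, and $\overline{w}_n$'s blocks are scalar), which gives
\begin{equation*}
2\mathcal{M}_n = \mathbf{G}_n^\intercal\Big[\widetilde{\mathcal{B}}_n^\intercal(\rho + \overline{w}_n)\mathrm{diag}(\sigma_i^n)\widetilde{\mathcal{B}}_n - (\rho - \overline{w}_n)\mathrm{diag}(\sigma_i^n)\Big]\mathbf{G}_n,
\end{equation*}
so $\mathcal{M}_n$ is congruent through $\mathbf{G}_n$ to the bracketed matrix. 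What remains is to expand this product, using the block identity that every $6\times 6$ block of $\textswab{I}_n\mathrm{diag}(\sigma_i^n)\textswab{I}_n$ equals $\sigma^n$ (by \eqref{eq:def_sigman_sum}), and collect terms by degree in $\rho$ and in $\overline{w}_n$; the decomposition $(\sigma^n + \overline{K}_n) - \sigma^n = \overline{K}_n$ then converts the $\rho$-part into the $-2\rho k_n^{-1}\textswab{I}_n\mathrm{diag}(\overline{K}_n)\textswab{I}_n$ contribution, while the $\overline{w}_n$-linear part rearranges into the remaining four terms of \eqref{eq:def_calMn_tilda_mult}.

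The hard part will be this final block-algebra expansion in the multiple-node case: because the weights $w_{i_j}$ attached to distinct incident edges are generically distinct, $\overline{w}_n$ does not commute with either $\textswab{I}_n$ or $\mathrm{diag}(\sigma^n + \overline{K}_n)$, so the cross products $\textswab{I}_n\overline{w}_n(\cdot)$ and $(\cdot)\,\overline{w}_n\textswab{I}_n$ must be tracked separately without symmetrization, and the intermediate expression must be recognized in the precise asymmetric form prescribed by \eqref{eq:def_calMn_tilda_mult}. Once the congruence through $\mathbf{G}_n$ and the key identity $\bar{D}_n^{-1} = \mathbf{G}_n^\intercal\mathrm{diag}(\sigma_i^n)\mathbf{G}_n$ are in place, however, the remainder is a finite, mechanical verification.
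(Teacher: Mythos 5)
Your proposal follows essentially the same route as the paper: the same reduction $Q_n^{\mathrm{out}}\bar{D}_n=\tfrac12(\rho+\overline{w}_n)\bar{D}_n^{-1}$, the same key identity $\bar{D}_n^{-1}=\mathbf{G}_n^\intercal\mathrm{diag}(\sigma_i^n)\mathbf{G}_n$, the conjugation $\mathbf{G}_n\mathcal{B}_n\mathbf{G}_n^{-1}$ at multiple nodes, and the symmetrize-and-diagonalize argument at controlled simple nodes (the paper symmetrizes with $\bar{D}_n^{\sfrac{1}{2}}$ rather than $\sigma^{-\sfrac{1}{2}}$, reconciled exactly by your $AB\sim BA$ remark). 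The only bookkeeping caveat is that at multiple nodes the bracketed matrix you obtain is not $\widetilde{\mathcal{M}}_n$ itself but congruent to it via the further invertible factor $\mathrm{diag}(\sigma^n+\overline{K}_n)^{-1}\mathrm{diag}(\sigma_i^n)$ -- the paper's full congruence matrix is $\mathbf{P}_n=\mathrm{diag}(\sigma^n+\overline{K}_n)^{-1}\mathrm{diag}(\sigma_i^n)\mathbf{G}_n$ -- which your ``mechanical verification'' would surface but which you should state explicitly.
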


Let us now make some observations by using Proposition \ref{prop:calMn_form}.
For any simple node $n$ at which a control is applied, we can see in \ref{item:form_calMn_simp} that $\mathcal{M}_n$ is negative semi-definite if and only if the largest diagonal entry of $\widetilde{M}_n$ is nonpositive; namely, it is equivalent to the inequality $\rho C_{K_n} - w_1(0) \leq 0$ if $n=0$, and to $\rho C_{K_n} + w_n(\ell_n) \leq 0$ if $n\neq 0$,
where the negative constant $C_{K_n}<0$ is defined by
\begin{linenomath}
\begin{equation*}
C_{K_n} = \max_{1\leq j \leq 6} \frac{(1- \Upsilon^n_j)^2(1+\Upsilon^n_j)^{-2} - 1 }{(1- \Upsilon^n_j)^2(1+\Upsilon^n_j)^{-2} + 1},
\end{equation*}
\end{linenomath}
$\{\Upsilon^n_j\}_{j=1}^6$ denoting the diagonal entries of $\Upsilon^n$.
These inequalities hold for any choice of weight functions, provided that $\rho>0$ is large enough. For any simple node $n$ at which the beam is free or clamped, \ref{item:form_calMn_simp} yields that $\mathcal{M}_n$ is negative semi-definite if and only if $w_1(0)\geq 0$ for $n=0$, and $w_n(\ell_n) \leq 0$ for $n \neq 0$.

For any multiple node $n$, one deduces from \ref{item:form_calMn_mult} that, for any $K_n \in \mathbb{S}^6_{++}$ or $K_n = \mathds{O}_6$, the matrix $\mathcal{M}_n$ is necessarily negative semi-definite if
\begin{equation}\label{eq:remcalMn_signWeights}
w_n(\ell_n)\leq 0, \qquad w_i(0)\geq 0, \quad \text{ for all }i \in \mathcal{I}_n.
\end{equation}
In particular, we recover (in a different manner) Theorem \ref{thm:stabilization} -- for which the network contains only one multiple node $n=1$ and all simple nodes are controlled -- since one can then assume \eqref{eq:remcalMn_signWeights} (with $n=1$) without being in contradiction with the fact that all $w_i$ ($i \in \mathcal{I}$) are increasing.

\begin{remark}
For different networks, estimating $\mathcal{M}_n$ ($n \in \mathcal{N}_M$) is less evident.
In particular (see Remark \ref{rem:remove_a_fb}), one may consider the star-shaped network of Theorem \ref{thm:stabilization} and remove the control applied at the node $n=0$. 
By the Step 3 of the proof of Theorem \ref{thm:stabilization}, $w_1$ is increasing, while by \ref{item:form_calMn_simp}, $\mathcal{M}_0$ is negative semi-definite if and only if $w_1(0)\geq 0$. Hence, $w_1(\ell_1)>0$.

However, in the expression \eqref{eq:def_calMn_tilda_mult} of $\widetilde{\mathcal{M}}_1$, the term $w_1(\ell_1)>0$ appears in $P_1 := \overline{w}_1 \mathrm{diag}(\sigma^1+\overline{K}_1)\mathrm{diag}(\sigma_i^1)^{-1}\mathrm{diag}(\sigma^1+\overline{K}_1)$ and turn the estimation of $P_1$ into a difficult task.
One may think that applying a velocity feedback control at the multiple node $n=1$ could be of help, since it leads to the presence of $P_2 := - 2 \rho k_1^{-1} \textswab{I}_1 \mathrm{diag}(\overline{K}_1)  \textswab{I}_1$ in the expression of $\widetilde{\mathcal{M}}_1$. Indeed, had $P_2$ been negative definite, choosing $\rho$ large enough in comparison to the weight functions would be sufficient to estimate the other terms composing $\widetilde{\mathcal{M}}_1$. However, due to the presence of $\textswab{I}_1$ in its expression $($see \eqref{eq:def_I_crochet}$)$, $P_2$ is only negative semi-definite, which is not enough to estimate $P_1$.
\end{remark}

On a side note, let us comment on a case which amounts to a single beam having been divided into several shorter beams by placing nodes at different locations of its spatial domain. Namely, consider a network of beams which are serially connected (i.e. $k_n=2$) at $n \in \mathcal{N}_M$, without angle (i.e. $R_n(\ell_n) = R_{i_2}(0)$) and having the same material and geometrical properties at the multiple nodes (i.e. $\mathbf{M}_n(\ell_n) = \mathbf{M}_{i_2}(0)$ and $\mathbf{C}_n(\ell_n) = \mathbf{C}_{i_2}(0)$). Then, choosing $w_n(\ell_n) = w_{i_2}(0)$, we obtain after some computations that $\widetilde{\mathcal{M}}_n$ (and consequently $\mathcal{M}_n$) is equal to the null matrix if $K_n = \mathds{O}_6$. Hence, one can stabilize these beams by applying a feedback at one of the simple nodes of the overall network (see Fig. \ref{fig:serial_beams}).

\begin{figure}
\includegraphics[scale=0.85]{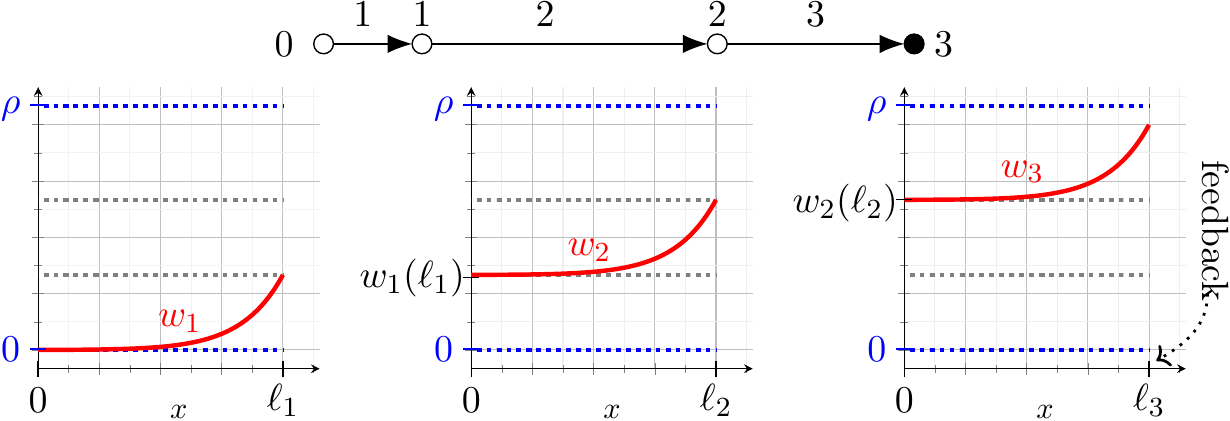}
\caption{Recovering the single beam case: example of choice of $\rho$ and weight functions.}
\label{fig:serial_beams}
\end{figure}

\section{Conclusion}

In this work, we have studied well-posedness (Theorem \ref{thm:well-posedness}) and stabilization (Theorem \ref{thm:stabilization}) for tree- and star-shaped networks of geometrically exact beams whose dynamics are given in terms of velocities and internal forces and moments, by the IGEB model.

Notably, using a quadratic Lyapunov functional (along the same lines as \cite{BC2016, bastin2017exponential}), we proved that stabilization of a star-shaped network can be achieved if the controls are applied at all the simple nodes. 
To construct this functional, we built an ansatz around the energy of the beam, taking into account the benefits and drawbacks inherent to form and properties of the model's coefficients.

A naturally ensuing question is whether or not this stability is preserved when one of the controls is removed. As the IGEB model is hyperbolic, it may be written in Riemann invariants (diagonal form) and stabilization may be equivalently analyzed both from the point of view of the physical \eqref{eq:syst_y} and diagonal \eqref{eq:syst_Riem_1} systems. We did so in Section \ref{sec:stab}, using the same Lyapunov functional, and observed that the diagonal point of view did not provide any edge in removing one of the feedback (see Subsection \ref{subsec:stab_D}). The difficulty may be only technical and the question remains open.

Let us also point out, without going into detail, that the local in time well-posedness result Theorem \ref{thm:well-posedness} and the stabilization result Theorem \ref{thm:stabilization} are likely to yield analogous results for corresponding networks in which the dynamics of each beam are given by the GEB model \eqref{eq:GEB}. In our previous work \cite{RL2019}, the transformation \eqref{eq:transfo} between the GEB and IGEB models was inverted under some assumptions, and the obtained solution $(\mathbf{p}, \mathbf{R})$ was shown to fulfill the overall system governed by GEB. Here, we believe that arguments similar to \cite{RL2019} apply, though after having inverted the transformation, one would first have to show the rigid joint property \eqref{eq:rigid_joint} before any of the other nodal conditions.

\subsection*{Acknowledgements}
The author is grateful to her PhD advisor G\"unter Leugering for his advice and encouragement.

\appendix

\section{Proof of Proposition \ref{prop:calMn_form}}

\label{app:calMn_form}
Let $n \in \mathcal{N}$, and let us compute the matrices $\mathcal{M}_n$ defined in \eqref{eq:def_calMn}.
In the context of Proposition \ref{prop:calMn_form}, $Q_i$ ($i \in \mathcal{I}$) is given by \eqref{eq:Qi_multEnergy}, or in other words $Q_i^- = \frac{1}{2} \left(\rho D_i^{-2} + w_i D_i^{-2} \right)$ and $Q_i^+ = \frac{1}{2}\left(\rho D_i^{-2} - w_i D_i^{-2} \right)$. As a consequence, the matrices $Q_n^\mathrm{out}, Q_n^\mathrm{in}$ involved in the expression of $\mathcal{M}_n$ take the form $Q_n^\mathrm{out} = \frac{1}{2}(\rho \bar{D}_n^{-2} + \overline{w}_n \bar{D}_n^{-2})$ and $Q_n^\mathrm{in} = \frac{1}{2}(\rho \bar{D}_n^{-2} - \overline{w}_n \bar{D}_n^{-2})$ (see \eqref{eq:def_Qnout_Qnin_1},\eqref{eq:def_Qnout_Qnin_2},\eqref{eq:def_Jbarn_wbarn}).

First, assume that $n$ is a multiple node. By \eqref{eq:def_calBn}, $\mathcal{B}_n = \mathbf{G}_n^{-1} \mathbf{H}_n \mathbf{G}_n$, with $\mathbf{H}_n = 2 \mathrm{diag}(\sigma^n+\overline{K}_n)^{-1} \textswab{I}_n \mathrm{diag}(\sigma_i^n) -  \mathds{I}_{6k_n}$. Hence, \eqref{eq:def_calMn} becomes
\begin{equation*}
\mathcal{M}_n = \frac{1}{2} \mathbf{G}_n^{\intercal} \mathbf{H}_n^\intercal (\mathbf{G}_n^{-1})^\intercal (\rho \mathds{I}_{6k_n} + \overline{w}_n) \bar{D}_n^{-1} \mathbf{G}_n^{-1} \mathbf{H}_n \mathbf{G}_n - \frac{1}{2} (\rho \mathds{I}_6 - \overline{w}_n) \bar{D}_n^{-1}.
\end{equation*}
We then obtain $\mathcal{M}_n = \frac{1}{2}\mathbf{G}_n^{\intercal} \left[ \mathbf{H}_n^\intercal (\rho \mathds{I}_{6k_n} + \overline{w}_n) \mathrm{diag}(\sigma_i^n) \mathbf{H}_n  - \rho \mathds{I}_6 + \overline{w}_n \mathrm{diag}(\sigma_i^n) \right]\mathbf{G}_n$ by using that $\overline{w}_n$ commutes with $\mathbf{G}_n$, and that $\mathrm{diag}(\sigma_i^n) =  (\mathbf{G}_n^{-1})^\intercal \bar{D}_n^{-1} \mathbf{G}_n^{-1}$ by definition (see \eqref{eq:def_gammai}-\eqref{eq:def_sigmai}-\eqref{eq:def_block_diag}).
Next, we replace $\mathbf{H}_n$ by its value given above and expand the product. Taking notice of the fact that the terms containing $\pm \rho$ are canceled, and that $(\rho \mathds{I}_{6k_n}  + \overline{w}_n)$ commutes with $\mathrm{diag}(\sigma_i^n)$, while $\textswab{I}_n$ commutes with $\mathrm{diag}(\sigma^n + \overline{K}_n)^{-1}$, we obtain the following expression for $\mathcal{M}_n$:
%
\begin{linenomath}
\begin{equation*}
\begin{aligned}
\mathcal{M}_n = \mathbf{P}_n^\intercal \big[&  2
\textswab{I}_n (\rho \mathds{I}_{6k_n}  + \overline{w}_n) \mathrm{diag}(\sigma_i^n)  
\textswab{I}_n - \textswab{I}_n  (\rho \mathds{I}_{6k_n}+ \overline{w}_n) \mathrm{diag}(\sigma^n+\overline{K}_n)  \\
&    -  \mathrm{diag}(\sigma^n+\overline{K}_n) (\rho \mathds{I}_{6k_n}  + \overline{w}_n)   \textswab{I}_n \\
& + \overline{w}_n \mathrm{diag}(\sigma^n+\overline{K}_n)\mathrm{diag}(\sigma_i^n)^{-1}\mathrm{diag}(\sigma^n+\overline{K}_n) \big] \mathbf{P}_n,
\end{aligned}
\end{equation*}
\end{linenomath}
where $\mathbf{P}_n \in \mathbb{R}^{6k_n \times 6k_n}$ is defined by $\mathbf{P}_n = \mathrm{diag}(\sigma^n + \overline{K}_n)^{-1} \mathrm{diag}(\sigma_i^n)\mathbf{G}_n$ which is clearly invertible.
Finally, from $\textswab{I}_n^2 = k_n \textswab{I}_n$, one can obtain the identities $\textswab{I}_n \mathrm{diag}(\sigma_i^n)  
\textswab{I}_n = k_n^{-1} \textswab{I}_n \mathrm{diag}(\sigma^n) \textswab{I}_n$ and $\textswab{I}_n \mathrm{diag}(\sigma^n+\overline{K}_n) = k_n^{-1} \textswab{I}_n \mathrm{diag}(\sigma^n+\overline{K}_n) \textswab{I}_n$. This enables us to deduce that $\mathcal{M}_n = \mathbf{P}_n^\intercal \widetilde{\mathcal{M}}_n \mathbf{P}_n$, for $\widetilde{\mathcal{M}}_n$ defined by \eqref{eq:def_calMn_tilda_mult}, concluding the proof of \ref{item:form_calMn_mult}.

Now, assume that $n$ is a simple node at which a control is applied. By definition, $\sigma_n^n = ((\gamma_n^n)^{-1})^\intercal \bar{D}_n^{-1} (\gamma_n^n)^{-1}$ and, as a consequence, one can write $\mathcal{B}_n = (\mathds{I}_6 + \bar{D}_n (\gamma_n^n)^\intercal \overline{K}_n \gamma_n^n)^{-1}(\mathds{I}_6 - \bar{D}_n (\gamma_n^n)^\intercal \overline{K}_n \gamma_n^n)$.
Let the positive definite diagonal matrix $\Upsilon^n$ and the unitary matrix $Z_n$ be defined by requiring that 
\begin{linenomath}
\begin{equation*}
Z_n^\intercal \Upsilon^n Z_n = \bar{D}_n^{\sfrac{1}{2}} (\gamma_n^n)^\intercal \overline{K}_n \gamma_n^n \bar{D}_n^{\sfrac{1}{2}}.
\end{equation*}
\end{linenomath}
In particular, the diagonal entries of $\Upsilon^n$ are the eigenvalues of the above right-hand side.
One can compute that $\mathcal{B}_n = \bar{D}_n^{\sfrac{1}{2}}Z_n^\intercal (\mathds{I}_6 + \Upsilon^n)^{-1}(\mathds{I}_6 - \Upsilon^n) Z_n \bar{D}_n^{-\sfrac{1}{2}}$,
and $\mathcal{M}_n$ then takes the form $\mathcal{M}_n = \frac{1}{2}\bar{D}_n^{-\sfrac{1}{2}} Z_n^\intercal  \widetilde{\mathcal{M}}_n Z_n \bar{D}_n^{-\sfrac{1}{2}}$ for $\widetilde{\mathcal{M}}_n$ defined by \eqref{eq:def_calMn_tilda_simp}.
Finally, the cases of a clamped or free (i.e. $K_n=\mathds{O}_6$) beam at the node $n$ follow from basic computations, as the matrix $\mathcal{B}_n$ is then equal to $- \mathds{I}_6$ or $\mathds{I}_6$, respectively. Hence, we have proved \ref{item:form_calMn_simp}.

\bibliographystyle{plain}
\bibliography{bibli}

\end{document}